\newcommand{\ba}{\begin{array}}
\newcommand{\ea}{\end{array}}
\newcommand{\be}{\begin{enumerate}}
\newcommand{\ee}{\end{enumerate}}
\newtheorem{thm}{Theorem}[section]
\newtheorem{prop}[thm]{Proposition}
\newtheorem{lemma}[thm]{Lemma}
\newtheorem{cor}[thm]{Corollary}
\newtheorem{conj}[thm]{Conjecture}
\theoremstyle{definition}
\newtheorem{defn}[thm]{Definition}
\theoremstyle{remark}
\newtheorem{rmk}[thm]{Remark}
\newtheorem{example}[thm]{Example}
\newcommand{\C}{\mathcal{C}}
\newcommand{\E}{\mathcal{E}}
\newcommand{\Ccl}{\mathcal{C}_{\infty}}
\newcommand{\Cv}{\mathcal{C}_{\infty1}}
\newcommand{\R}{\mathbb{R}}
\newcommand{\Z}{\mathbb{Z}}
\newcommand{\N}{\mathbb{N}}
\newcommand{\bdry}{\partial}
\newcommand{\Cl}{Cl_{\mathbb{Z}}}
\newcommand{\cl}{\mathcal{CL}}
\newcommand{\cv}{\mathcal{V}}
\newcommand{\cb}{\mathcal{B}}
\newcommand{\hb}{\mathcal{HB}}
\newcommand{\bs}{\backslash}
\newcommand{\Hom}{\operatorname{Hom}}
\newcommand{\n}{\noindent}
\newcommand{\F}{\mathbb{F}_2}
\newcommand{\ot}{\otimes}
\newcommand{\we}{\wedge}
\newcommand{\ra}{\rightarrow}
\newcommand{\xra}{\xrightarrow}
\newcommand{\g}{\Gamma}
\newcommand{\mf}{\mathbf}
\newcommand{\op}{\operatorname}
\newcommand{\cal}{\mathcal}
\newcommand{\es}{\emptyset}
\newcommand{\lan}{\langle}
\newcommand{\ran}{\rangle}
\newcommand{\uq}{\mathbf{U}_q(\mathfrak{sl}(1|1))}
\begin{document}
\title{A diagrammatic categorification of a Clifford algebra}

\author{Yin Tian}
\address{University of Southern California, Los Angeles, CA 90089}
\email{yintian@usc.edu}

\begin{abstract}
We give a graphical calculus for a categorification of a Clifford algebra and its Fock space representation via differential graded categories.
The categorical action is motivated by the gluing action between the {\em contact categories} of infinite strips.
\end{abstract}

\maketitle

\section{Introduction}
This paper is a sequel to \cite{Tian1} in which the author categorified a finite-dimensional Clifford algebra.
The goal of this paper is to categorify an infinite-dimensional Clifford algebra $Cl_{\Z}$ via a diagrammatic monoidal DG category $\cl$ and give a categorical action of $\cl$ on a category $DGP(R)$ which lifts the Fock space representation $V$ of $Cl_{\Z}$.
(See Section 1.1 for precise definitions of $Cl_{\Z}$ and $V$.)
The main improvement over \cite{Tian1} is that the categorification is done diagrammatically.
In particular, the category $\cl$ is strict monoidal.

\vspace{.2cm}
The Clifford algebra $Cl$ and its cousin, the Heisenberg algebra $H$, arise from the study of commutation relations in fermionic and bosonic quantum mechanics, respectively.
Here $H$ has generators $p_i, q_i$, for $i$ in some infinite set $I$ and commutator relations
$$[p_i,q_j]=p_iq_j-q_jp_i=\delta_{i,j}1, \quad [p_i,p_j]=0, \quad [q_i,q_j]=0.$$
The symmetric algebra of infinitely many variables indexed by $I$ admits a natural action of $H$.
Khovanov \cite{Kh3} provided a graphical calculus for a categorification of $H$.
More generally, Cautis and Licata \cite{CL1} presented a categorification $\cal{H}_{\g}$ of the Heisenberg algebra $H_{\g}$ associated to a finite subgroup $\g \subset SL_2(\mathbb{C})$.
In a later paper \cite{CL2}, they provided 2-representations of quantum affine algebras via complexes in the 2-representation of quantum Heisenberg algebra.
The diagrammatic approach to categorification was pioneered by Lauda in \cite{La} where he obtained a categorification of the quantum group $\mathbf{U}_q(\mathfrak{sl}_2)$ .

\vspace{.2cm}
The Clifford algebra $Cl$ is a $\mathbb{C}$-algebra with generators $\psi_j, \psi_j^*$ for $j\in \Z$ and relations:
$$\{\psi_i, \psi_j^*\}=\psi_i\psi_j^*+\psi_j^*\psi_i=\delta_{ij}, \quad \{\psi_i, \psi_j\}=0, \quad \{\psi_i^*, \psi_j^*\}=0,$$
for $i,j \in \Z$.
It is the analogue of $H$ obtained by replacing the commutation relations by the anticommutation relations.
The Clifford algebra $Cl$ acts on the exterior algebra of infinitely many variables, which is called the Fock space of free fermions.
Note that this Fock space also admits an action of the Heisenberg algebra $H$ under the boson-fermion correspondence \cite[Section 14.9]{Kac}.

The antisymmetric property of fermions in physics can be understood in the framework of differential graded categories in mathematics.
Lipshitz, Ozsv\'ath and Thurston in \cite{LOT} defined a diagrammatic differential graded algebra, called the {\em strands algebra} in the context of bordered Heegaard Floer homology.
Motivated by the strands algebra, Khovanov \cite{Kh2} gave a diagrammatic categorification of the positive part of $\mathbf{U}_q(\mathfrak{gl}(1|2))$.
A connection between Heegaard Floer homology and $3$-dimensional contact topology on the categorical level was observed by Zarev in \cite{Za}.
Based on this connection the author categorified $\mathbf{U}_t(\mathfrak{sl}(1|1))$ and its tensor product representations in \cite{Tian2}.
Our graphical calculus in this paper is another attempt to pursue these diagrammatic methods for the Clifford algebra.

\subsection{Main results}
Let $F$ be a complex vector space with a basis:
$$\{i_1\we i_2 \we \cdots ~|~ i_1>i_2>\cdots ~\mbox{are integers}, i_n=i_{n-1}-1 ~\mbox{for}~ n\gg0\}.$$
Elements in the basis are called {\em semi-infinite monomials} and $|k\ran=k \we (k-1) \we \cdots$ is called the {\em vacuum state} of {\em charge} $k$.
The generators $\psi_j$ and $\psi_j^*$ act on $F$ via {\em wedging} and {\em contracting} operators:
\begin{align*}
\psi_j(i_1 \we i_2 \we \cdots)=& \left\{
\begin{array}{ll}
0 & \quad\mbox{if} \hspace{0.3cm} j=i_k ~\mbox{for some}~ k; \\
(-1)^{k}i_1 \we \cdots \we i_k \we j \we i_{k+1} \we \cdots & \quad\mbox{if} \hspace{0.3cm} i_k > j > i_{k+1}.
\end{array}\right. \\
\psi_j^*(i_1 \we i_2 \we \cdots)=& \left\{
\begin{array}{ll}
0 & \quad \mbox{if} \hspace{0.3cm} j\neq i_k ~\mbox{for all}~ k; \\
(-1)^{k-1}i_1 \we \cdots \we i_{k-1} \we i_{k+1} \we \cdots & \quad \mbox{if} \hspace{0.3cm} j= i_k.
\end{array}\right. \\
\end{align*}
The vector space $F$ is called the Fock space representation of $Cl$.
We refer to \cite[Section 14.9]{Kac} for more details.

For the purpose of categorification, we define the integral Clifford algebra $\Cl$ as a $\Z$-algebra with generators $a_i$ for $i \in \Z$ and relations:
\begin{gather*}
a_i^2=0;\\
a_ia_j=-a_j a_i \hspace{.2cm} \mbox{if} \hspace{.2cm} |i-j|>1;\\
a_i a_{i+1} + a_{i+1}a_i=1.
\end{gather*}
There is a $\mathbb{C}$-algebra homomorphism $\theta$:
$$\begin{array}{cccc}
\theta: & Cl_{\Z} \ot_{\Z}  \mathbb{C} & \ra & Cl \\
 & a_{2i} & \mapsto & \psi_i^* \\
 & a_{2i-1} & \mapsto & \psi_i+\psi_{i-1},
\end{array}$$
which is an isomorphism between suitable completions of the algebras.
Let $V$ be a free abelian group whose basis is the set of semi-infinite monomials.
Then $V$ is a representation of $Cl_{\Z}$ which is an integral version of the pullback of the representation $F$ of $Cl$ under the homomorphism $\theta$.
The goal of this paper is to categorify $Cl_{\Z}$ and $V$ via DG categories.

Let $H(\cal{A})$ denote the $0$th homology category of a DG category $\cal{A}$.
Let $K_0(\cal{A})$ denote the Grothendieck group of $H(\cal{A})$ if $H(\cal{A})$ is triangulated.
Our main results are the followings.

\begin{thm} \label{thmcl}
There exists a DG category $\cl$ such that $H(\cl)$ is triangulated.
There is a monoidal functor $\ot: \cl \times \cl \ra \cl$ whose decategorification $K_0(\ot): K_0(\cl) \times K_0(\cl) \ra K_0(\cl)$ makes $K_0(\cl)$ isomorphic to the Clifford algebra $Cl_{\Z}$.
\end{thm}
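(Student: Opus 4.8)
The plan is to construct $\cl$ directly from planar diagrams, verify it is pretriangulated so that $H(\cl)$ carries a triangulated structure, and then identify $K_0(\cl)$ by exhibiting generators together with the three defining relations of $\Cl$, finally upgrading this to an isomorphism using faithfulness of the Fock space representation. For the construction, objects of $\cl$ are finite words in symbols $A_i$, $i \in \Z$ (equivalently, finite sequences of integer-labelled points on an interval), with the empty word the monoidal unit $\mf{1}$; on objects $\ot$ is concatenation and on morphisms it is horizontal juxtaposition of diagrams. Each morphism complex $\Hom_{\cl}(x,y)$ is spanned by planar strand diagrams in a rectangle joining the bottom points of $x$ to the top points of $y$, modulo local relations; the homological grading counts certain critical configurations (turnbacks), and the differential $d$ resolves one such configuration at a time. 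The local relations, and the sign conventions making $\ot$ and vertical composition into DG-functors that obey the interchange law, are dictated by the gluing of contact categories of infinite strips as in \cite{Tian2}; one then checks $d^2 = 0$, the Leibniz rule, associativity, and local finiteness of the $\Hom$-complexes.

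To obtain the triangulated structure I would arrange that $\cl$ is closed under shifts $[n]$ and under mapping cones of closed degree-zero morphisms, either by passing to the pretriangulated hull (twisted complexes \`a la Bondal--Kapranov) of a small generating DG subcategory, or by identifying $\cl$ with a category of finitely generated one-sided twisted complexes over an explicit DG algebra; general DG-category theory then shows $H(\cl)$ is triangulated, with suspension $[1]$ and exact triangles from mapping cones. Next, define $\phi \colon \Cl \ra K_0(\cl)$ on generators by $a_i \mapsto [A_i]$ (after a normalizing grading shift), $A_i$ the one-letter object, and verify the three relations in $K_0(\cl)$: (i) $a_i^2 = 0$, by showing $A_i \ot A_i$ has vanishing class --- most likely because it is contractible, via an explicit null-homotopy coming from a bigon diagram --- so $[A_i \ot A_i] = 0$; (ii) $a_i a_j = -a_j a_i$ for $|i-j| > 1$, by producing from a single closed crossing of distant strands an isomorphism $A_i \ot A_j \xra{\ \sim\ } (A_j \ot A_i)[1]$ in $H(\cl)$, so $[A_i \ot A_j] = -[A_j \ot A_i]$; (iii) $a_i a_{i+1} + a_{i+1} a_i = 1$, by producing an exact triangle
\[
A_i \ot A_{i+1} \ra \mf{1} \ra A_{i+1} \ot A_i \ra (A_i \ot A_{i+1})[1]
\]
whose first two maps are explicit cup/cap-like diagrams (realizing $A_{i+1} \ot A_i$, or symmetrically $A_i \ot A_{i+1}$, as such a cone), giving $[\mf{1}] = [A_i \ot A_{i+1}] + [A_{i+1} \ot A_i]$. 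This makes $\phi$ a well-defined unital ring homomorphism with respect to $K_0(\ot)$.

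For the isomorphism, surjectivity is immediate: every object of $\cl$ is an iterated $\ot$ of one-letter objects, so $H(\cl)$ is generated as a triangulated category by $\mf{1}$ and the $A_i$, and $K_0(\cl)$ is generated as a ring by the classes $[A_i]$. Injectivity I would deduce from the categorical action of $\cl$ on $DGP(R)$ lifting $V$: the composite $\Cl \xra{\phi} K_0(\cl) \ra \op{End}_{\Z}(K_0(DGP(R))) = \op{End}_{\Z}(V)$ is the $\Cl$-action on $V$, and this action is faithful since it is pulled back along the injective map $\theta$ from the faithful Fock representation of $Cl$; hence $\phi$ is injective. Alternatively one can classify the indecomposable objects of $H(\cl)$ directly and check that $K_0(\cl)$ has exactly the expected rank. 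Either route yields $K_0(\cl) \cong \Cl$.

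The main obstacle, I expect, is making the triangulated-structure step and the "no extra relations" half of injectivity cohere: the local relations on diagrams must be rich enough that the cones in (iii) genuinely exist and $H(\cl)$ is honestly triangulated, yet tight enough that $K_0(\cl)$ neither collapses nor acquires relations beyond those of $\Cl$. Controlling the idempotents and indecomposable objects of the resulting DG strand algebra, and checking that the Fock-space functor decategorifies on the nose, is where the real work lies.
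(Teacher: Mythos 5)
Your proposal matches the paper's proof in all essential respects: define $\cl$ diagrammatically, pass to one-sided twisted complexes (Bondal--Kapranov) so that $H(\cl)$ is triangulated, derive the three Clifford relations in $K_0(\cl)$ from the isomorphisms $(i,i)\simeq 0$, $(j,i)\simeq(i,j)[\pm1]$ for $|i-j|>1$, and the cone identifications $(e\pm1,e)\simeq\{(\es)\to(e,e\pm1)\}$, then get surjectivity of $\Cl\to K_0(\cl)$ for free and injectivity by factoring the faithful Fock space action $\Cl\curvearrowright V$ through the categorical action $\cl\times DGP(R)\to DGP(R)$. Two cosmetic imprecisions: the null-homotopy of $id_{(i,i)}$ is provided by the crossing $cr_{i,i}$ (with $d(cr_{i,i})=id_{(i,i)}$), not by a bigon, and the shift in the anticommutation isomorphism is $[\op{gr}(cr_{i,j})]=[\pm1]$ rather than always $[1]$ --- but both give sign $-1$ in $K_0$, so the conclusion is unaffected.
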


\begin{thm} \label{thmv}
There exists a DG category $DGP(R)$ generated by some projective DG modules over a DG algebra $R$ such that $H(DGP(R))$ is triangulated.
Moreover, there is a categorical action
$\cal{F}: \cl \times DGP(R) \ra DGP(R)$ whose decategorification $K_0(\cal{F}): K_0(\cl) \times K_0(DGP(R)) \ra K_0(DGP(R))$ makes $K_0(DGP(R))$ isomorphic to the representation $V$ of $Cl_{\Z}$.
\end{thm}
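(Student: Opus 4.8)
The plan is to build $R$ as the DG algebra assembled from the contact categories of the infinite strips governing the construction of $\cl$, so that its indecomposable projective DG modules are indexed by the semi-infinite monomials. Concretely, I would take $R=\bigoplus_{S,T}e_TR\,e_S$, where $e_SR\,e_T$ is the complex spanned by the ``strand'' diagrams interpolating between the finite configurations by which $S$ and $T$ differ from the relevant vacuum state, the differential resolving crossings; the idempotent $e_S$ then cuts out $P_S=e_SR$. One defines $DGP(R)$ to be the smallest full DG subcategory of right DG $R$-modules containing all $P_S$ and closed under shifts, mapping cones and direct summands. With this definition $DGP(R)$ is pretriangulated, so $H(DGP(R))$ is triangulated by the standard argument; no further work is needed here beyond the construction of $R$.

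The second step identifies $K_0(DGP(R))$ with $V$. The assignment $S\mapsto[P_S]$ gives a surjection $V\ra K_0(DGP(R))$ since the $P_S$ generate $DGP(R)$. For injectivity I would equip $R$ with an auxiliary nonnegative grading in which each $e_SR\,e_S$ reduces to $\Z$ in degree $0$ while $e_SR\,e_T$ is supported in strictly positive degrees for $S\ne T$; then the $P_S$ are pairwise non-isomorphic indecomposable projectives, and a minimality argument shows that no exact triangle in $H(DGP(R))$ forces a relation among the classes $[P_S]$ beyond the trivial ones. Hence $\{[P_S]\}_S$ is a $\Z$-basis of $K_0(DGP(R))$ and $V\cong K_0(DGP(R))$ as abelian groups.

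Next I would construct the action. For each $i\in\Z$ I would write down a DG $(R,R)$-bimodule $\cb_i$ categorifying $\theta(a_i)$ on objects: $\cb_{2i}$ realizes the contraction $\psi_i^*$, so that $\cb_{2i}\ot_RP_S\simeq P_{S\bs\{i\}}$ if $i\in S$ and $\simeq 0$ otherwise, while $\cb_{2i-1}$ realizes $\psi_i+\psi_{i-1}$, sending $P_S$ to the sum of the ``point-insertion'' projectives at $i$ and at $i-1$ (a summand being dropped when that point is already present). I would then upgrade the family $\{\cb_i\}_{i\in\Z}$ to a monoidal DG functor $\Phi$ from $\cl$ to the category of $(R,R)$-bimodules, sending the generating objects of $\cl$ to the $\cb_i$ and the generating morphisms of the diagrammatic category $\cl$ to explicit bimodule maps, and set $\cal{F}(X,M)=\Phi(X)\ot_RM$. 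Checking that $\Phi$ respects all relations of $\cl$ is the crux of the argument: one must verify $\cb_i\ot_R\cb_i\simeq 0$ in $H(DGP(R))$ (categorifying $a_i^2=0$: literally zero for $\cb_{2i}$, a contractible complex for $\cb_{2i-1}$), homotopy equivalences $\cb_i\ot_R\cb_j\simeq(\cb_j\ot_R\cb_i)[1]$ for $|i-j|>1$ whose homological shift accounts for the sign in $a_ia_j=-a_ja_i$, and a short exact sequence of bimodules
\[
0\ra\cb_{i+1}\ot_R\cb_i\ra R\ra\cb_i\ot_R\cb_{i+1}\ra 0
\]
whose associated exact triangle categorifies $a_ia_{i+1}+a_{i+1}a_i=1$.

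Finally, the decategorification statement follows by assembling these pieces. Under the isomorphisms $K_0(\cl)\cong\Cl$ of Theorem~\ref{thmcl} and $K_0(DGP(R))\cong V$ above, $K_0(\cal{F})$ carries the class $[\cb_i]=a_i$ together with a basis element $[P_S]$ to $[\cb_i\ot_RP_S]$, which by the object-level formulas is precisely the action of $\theta(a_i)$ on the monomial $S$; checking that the signs match those in the definitions of $\psi_j$ and $\psi_j^*$ then shows $K_0(DGP(R))\cong V$ as $\Cl$-modules. The main obstacle is the third step --- producing the bimodules $\cb_i$ together with bimodule maps realizing every diagrammatic relation of $\cl$, in particular the short exact sequence categorifying $a_ia_{i+1}+a_{i+1}a_i=1$ with compatible differentials, and keeping track of the homological shifts responsible for the Clifford signs.
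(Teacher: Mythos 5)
Your outline follows essentially the same route as the paper: build a DG algebra $R$ whose projectives $P(\mf{x})$ are indexed by the semi-infinite monomials, take $DGP(R)$ to be the pretriangulated envelope of those projectives, construct DG $(R,R)$-bimodules (the paper's $T_i$, your $\cb_i$) categorifying the generators $a_i$, upgrade to a monoidal DG functor $\tau:\cl\ra DG(R\ot R^{op})$, and decategorify. The bimodule isomorphisms you write down (the ``commutation up to shift'' for $|i-j|>1$ and the short exact sequence for adjacent indices) are the paper's Lemmas \ref{13=31}, \ref{12=21+es} and \ref{32=23+es}; as the paper stresses after Lemmas \ref{12=21+es} and \ref{32=23+es}, the decomposition there is only an isomorphism of underlying bimodules and carries a nontrivial connecting differential, which is compatible with your short exact sequence picture.

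Where your plan is genuinely underdetermined is precisely where the paper puts in most of its work. First, ``checking that $\Phi$ respects all relations of $\cl$ is the crux'' is acknowledged but left without a method; the paper makes this tractable by introducing the auxiliary diagrammatic category $\cv$ (Section 4), establishing energy monotonicity (Lemma \ref{energy}) and canonical forms (Lemma \ref{can}, Proposition \ref{hom}) so that each morphism space has an explicit $\F$-basis, and then verifying each relation of $\cl$ on generators of the bimodules via this calculus (Section 6.3). Without some such concrete description of morphisms in $R$, the verifications cannot actually be carried out. Second, the identification $K_0(DGP(R))\cong V$ is handled in the paper not by an abstract ``positive grading and minimality'' argument but by computing the cohomology $H(R)$ explicitly (it is the path algebra of a quiver on partitions, Proposition \ref{HR}), showing $R$ is formal, and then reducing to finite sub-quivers via the energy bound (Lemma \ref{k0}); you should be aware that an appeal to positivity alone does not control $K_0$ of a DG algebra without a formality or t-structure argument. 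Finally, note that the paper does not have an independent proof of Theorem \ref{thmcl}: the surjectivity of $\gamma:\Cl\ra K_0(\cl)$ is Proposition \ref{K0clsur}, but injectivity is deduced only at the end, from the faithfulness of $V$ (Lemma \ref{faithful}) together with the categorical action being built in Section 6. Your plan cites Theorem \ref{thmcl} as an input, which would be circular if executed literally; the two theorems have to be concluded simultaneously, as in the paper's combined proof.
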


\begin{rmk}
The main improvement over \cite{Tian1} is the monoidal functor on $\cl$ which is easily defined diagrammatically.
\end{rmk}

\subsection{Motivation from the contact category}
The motivation is from the {\em contact category} introduced by Honda \cite{Honda1} which presents an algebraic way to study contact topology in dimension $3$.
The contact category $\C(\Sigma, F)$ of $(\Sigma, F)$ is an additive category associated to a compact oriented surface $\Sigma$ with a subset of marked points $F$ of $\bdry\Sigma$.
The objects of $\C(\Sigma, F)$ are isotopy classes of {\em dividing sets} on $\Sigma$ with a homotopy grading whose restrictions to $\bdry\Sigma$ agree with $F$.
The morphisms are $\F$-vector spaces spanned by isotopy classes of {\em tight} contact structures $[\xi]$ on $\Sigma \times [0,1]$.
More precisely, a dividing set on $\Sigma$ is a properly embedded 1-manifold, possibly disconnected, which divides $\Sigma$ into positive and negative regions.
Any dividing set with a contractible component is defined as the zero object since there is no tight contact structure in a neighborhood of the dividing set by a criterion of Giroux \cite{Gi}.
As basic blocks of morphisms, {\em bypass attachments} introduced by Honda \cite{Honda2} give elementary pieces of contact structures on $\Sigma \times [0,1]$.
The effect of a bypass attachment locally changes dividing sets as in Figure \ref{in0}.
We refer to \cite{Tian1} for a more detailed introduction to the contact category.
\begin{figure}[h]
\begin{overpic}
[scale=0.25]{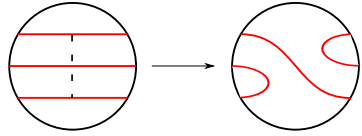}
\end{overpic}
\caption{A basic morphism in the contact category given by a bypass attachment.}
\label{in0}
\end{figure}

We are interested in the contact category $\C_{\infty}$ of an infinite strip with infinitely many marked points indexed by $\Z$ on both boundary components.
A monoidal structure on $\Ccl$ is given by horizontally stacking two dividing sets along their common boundaries for the objects and sideways stacking two contact structures for the morphisms.
There is a collection of distinguished dividing sets $[\es]$ and $[i]$'s for $i \in \Z$ as in Figure \ref{in1}.
Note that indices in $\Z$ on the left boundary of the strip is increasing from bottom to top.
This assignment is different from the one in \cite{Tian1}.
A key feature of $\Ccl$ is the existence of distinguished triangles given by three bypass attachments $[i+1]\cdot[i] \ra [\es] \ra [i]\cdot[i+1] \ra [i+1]\cdot[i]$.
These triangles descend to relations $a_{i}a_{i+1}+a_{i+1}a_{i}=1$ in the Clifford algebra $Cl_{\Z}$.
Our diagrammatic DG category $\cl$ can be viewed as a DG realization of the contact category $\Ccl$.
\begin{figure}[h]
\begin{overpic}
[scale=0.25]{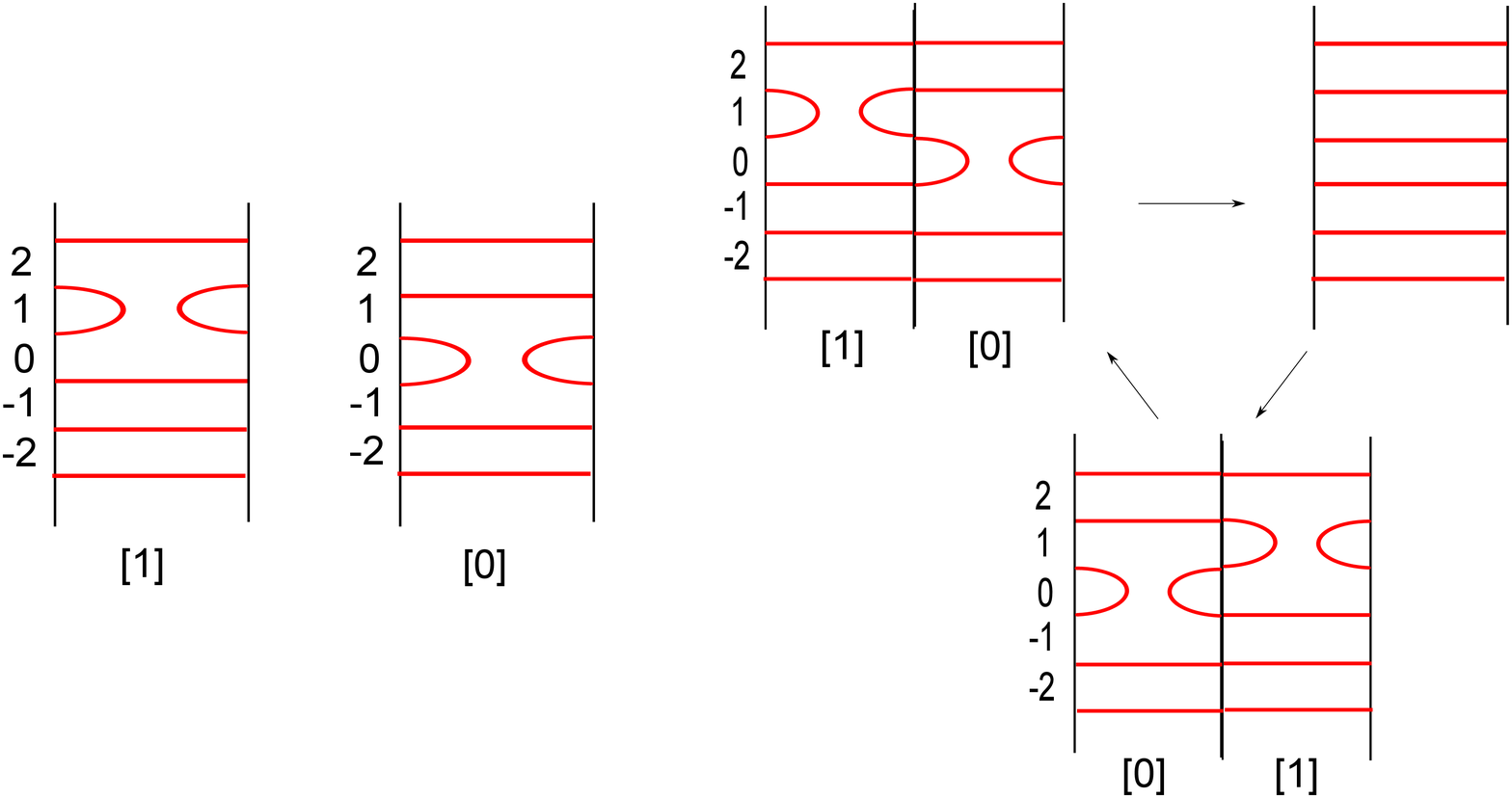}
\put(92,28){$[\es]$}
\end{overpic}
\caption{Distinguished objects $[0],[1]$ are on the left; a distinguished triangle in $\Ccl$ is on the right.}
\label{in1}
\end{figure}

To categorify the Fock space representation $V$, consider the contact category $\Cv$ of an infinite strip with infinitely many marked points indexed by $\Z$ on the left boundary and only one marked point on the right.
The semi-infinite monomials in the basis of $V$ can be lifted to distinguished objects of $\Cv$ as in Figure \ref{in2}.
For instance, each factor $i$ for $i \leq 0$ in a semi-infinite monomial $|0\ran=0 \we -1 \we \cdots$ corresponds to an arc $L_i$ on the infinite strip which connects the label $2i$ on the left boundary to the right boundary.
The collection of $L_i$'s for $i \leq 0$ determines a dividing set $\g_{|0\ran}$ of $\Cv$ whose negative region retracts
onto the union of $L_i$'s.
Under this lifting procedure, the {\em charge} of a semi-infinite monomial is related to the {\em Euler number} of the corresponding dividing set.
The action of $\Cl$ on $V$ is lifted to a categorical action of $\Ccl$ on $\Cv$ given by horizontally stacking morphisms $[\xi_0]$ of $\Ccl$ with morphisms $[\xi_1]$ of $\Cv$, where $[\xi_0]$ is to the left and $[\xi_1]$ is to the right.
See Figure \ref{in2}.

In general, the contact category is intimately related to bordered Heegaard Floer homology as well as to the {\em wrapped Fukaya category} interpretation of Heegaard Floer homology due to Auroux \cite{Au}.
The semi-infinite monomials in $V$ correspond to wedge products of arcs which are Lagrangians of an infinite symmetric product of the strip.

\begin{figure}[h]
\begin{overpic}
[scale=0.20]{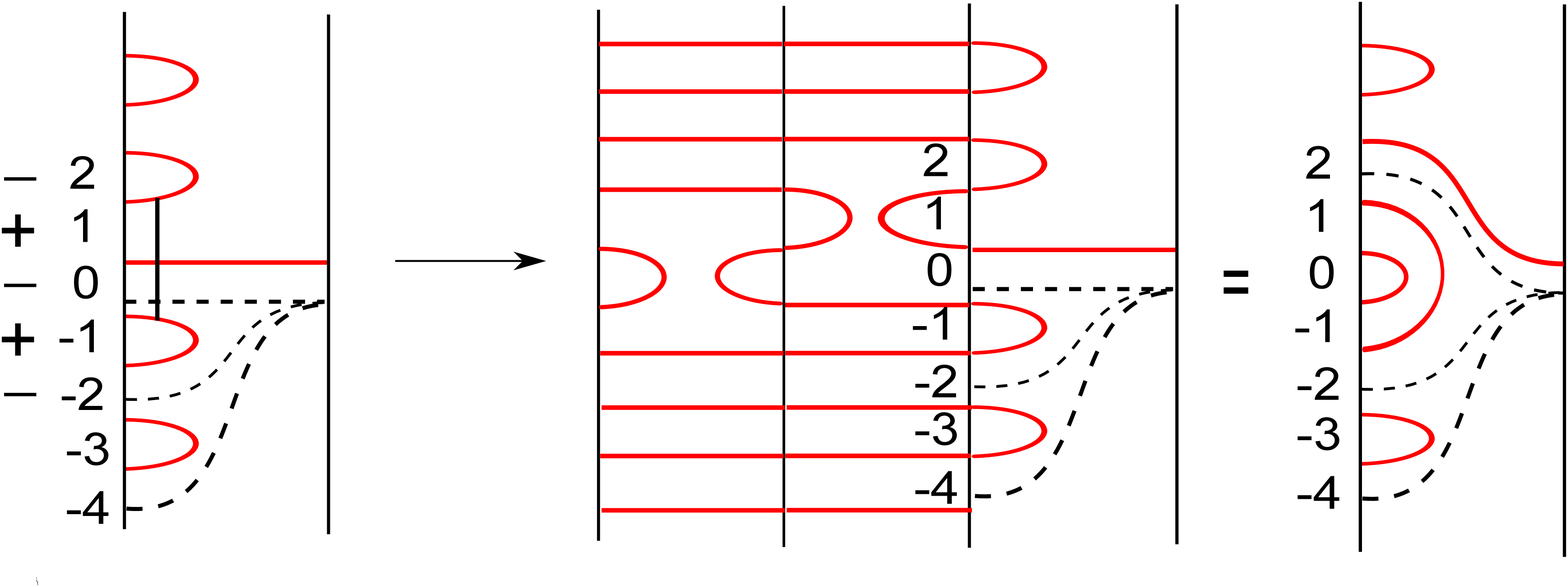}
\put(12,18){${\scriptstyle L_0}$}
\put(10,12){${\scriptstyle L_{-1}}$}
\put(10,5){${\scriptstyle L_{-2}}$}
\put(0,0){$|0\ran=0 \we -1 \we -2 \we \cdots$}
\put(68,0){$|0\ran$}
\put(44,0){$[0]$}
\put(55,0){$[1]$}
\put(82,0){$1 \we -1 \we -2 \we \cdots$}
\end{overpic}
\caption{Semi-monomials in $V$ are lifted to objects of $\Cv$, where dashed arcs are Lagrangians and the vertical solid arc denotes the bypass from $|0\ran$ to $[0][1]|0\ran$.}
\label{in2}
\end{figure}

\subsection{Organization of the paper}
\begin{itemize}
\item In Section 2 we define the diagrammatic DG category $\cl$ and give a surjective homomorphism $\Cl \twoheadrightarrow K_0(\cl)$.
\item In Section 3 we review the Fock space representation $V$ of $Cl_{\Z}$ and show that it is faithful.
\item In Section 4 we define a diagrammatic DG category $\cv$ aiming to categorify $V$.
The graphical calculus for $\cv$ is much easier than that for $\cl$ so that it enable us to give bases of morphism sets of $\cv$.
\item In Section 5 we give an algebraic formulation $DGP(R)$ of $\cv$ in terms of projective DG $R$-modules, where $R$ is a DG algebra generated by diagrams in $\cv$.
    We show that the categories $\cv$ and $DGP(R)$ are equivalent and both Grothendieck groups are isomorphic to $V$.
\item In Section 6 we construct a categorical action of $\cl$ on $DGP(R)$ via DG $R$-bimodules.
The graphical calculus in $\cv$ is the main tool to check that the relations in $\cl$ are preserved under the action.
\end{itemize}

\vspace{.2cm}
\noindent {\bf Acknowledgements:}
I am very grateful to Ko Honda for many ideas and suggestions and introducing me to the contact category.
I would like to thank Mikhail Khovanov for suggesting this project and many illuminating conversations.
I would also like to thank Aaron Lauda for suggestions to look for representations of the Clifford algebra, and Anthony Licata for pointing out the faithfulness of the Fock space representation.

\section{The diagrammatic DG category $\cl$}
In this section we define the diagrammatic monoidal DG category $\cl$ in two steps:
\be
\item define the elementary objects and morphisms between them;
\item enlarge to {\em one-sided twisted complexes} of elementary objects.
\ee
(1), carried out in Section 2.1, is diagrammatic and gives essential ingredients of $\cl$.
(2), carried out in Section 2.2, is the analogue of passing from modules to complexes of modules.
In Section 2.3 we discuss the Grothendieck group $K_0(\cl)$ and prove the following.

\begin{prop}\label{K0clsur}
There exists a monoidal DG category $\cl$ such that $H(\cl)$ is triangulated and $K_0(\cl)$ is a quotient of $Cl_{\Z}$ under a surjective algebra homomorphism $\gamma: \Cl \twoheadrightarrow K_0(\cl)$.
\end{prop}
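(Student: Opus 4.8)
The plan is to realize $\cl$ as the DG category of one-sided twisted complexes over the small DG category of elementary objects constructed in Sections 2.1--2.2, and then to read off the ring structure of $K_0(\cl)$ from a short list of distinguished triangles mirroring the bypass triangles of the contact category $\Ccl$.

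First, once the elementary objects and their DG morphism complexes are in place, the passage to one-sided twisted complexes carried out in Section 2.2 is precisely the DG analogue of forming bounded complexes of projective modules; by the standard theory of (one-sided) twisted complexes over a DG category, the resulting category is pretriangulated, with shift functor $[1]$ and mapping cones assembled directly from the twisted differential. Hence $H(\cl)$ is triangulated and $K_0(\cl)$ is the Grothendieck group of this (essentially small) triangulated category. Because the monoidal functor $\ot\colon\cl\times\cl\ra\cl$ is defined by horizontal stacking of diagrams, it is a DG bifunctor, it is strictly associative, and it preserves mapping cones separately in each variable; therefore $K_0(\ot)$ equips $K_0(\cl)$ with an associative $\Z$-bilinear product whose two-sided unit is the class $[\es]$ of the empty diagram, the monoidal unit of $\cl$.

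Next I would define $\gamma\colon\Cl\ra K_0(\cl)$ on generators by $\gamma(a_i)=[\mf{P}_i]$, where $\mf{P}_i$ is the elementary object lifting the distinguished dividing set $[i]$ of Figure~\ref{in1}, together with $\gamma(1)=[\es]$, and then verify that the three families of defining relations of $\Cl$ hold in $K_0(\cl)$. The relation $a_i^2=0$ should be witnessed by an explicit contraction of $\mf{P}_i\ot\mf{P}_i$ onto the zero object --- the diagrammatic counterpart of the vanishing of tight contact structures near a contractible dividing curve --- so that $[\mf{P}_i]^2=0$. For $|i-j|>1$ the two orders of stacking should be related by a homotopy equivalence $\mf{P}_i\ot\mf{P}_j\simeq(\mf{P}_j\ot\mf{P}_i)[1]$, whose homological shift by $1$ produces the sign and gives $a_ia_j=-a_ja_i$ in $K_0(\cl)$. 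Finally, a lift of the bypass triangle $[i+1]\cdot[i]\ra[\es]\ra[i]\cdot[i+1]\ra([i+1]\cdot[i])[1]$ to a distinguished triangle $\mf{P}_{i+1}\ot\mf{P}_i\ra[\es]\ra\mf{P}_i\ot\mf{P}_{i+1}\ra(\mf{P}_{i+1}\ot\mf{P}_i)[1]$ in $H(\cl)$ yields, on passing to $K_0$, the identity $[\mf{P}_{i+1}\ot\mf{P}_i]+[\mf{P}_i\ot\mf{P}_{i+1}]=[\es]$, i.e.\ $a_ia_{i+1}+a_{i+1}a_i=1$. I expect the verification of these three relations to be the main obstacle: it forces one to produce the explicit twisted differentials, the contracting homotopies, and the chain maps realizing these triangles inside $\cl$, and in particular to confirm that the bypass triangle is a genuine distinguished triangle in the DG category rather than only a formal shadow of the contact-geometric picture.

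It then remains to check that $\gamma$ is surjective, which is essentially formal. Every one-sided twisted complex in $\cl$ is, by construction, an iterated mapping cone of shifts of elementary objects, so the classes of the elementary objects already generate $K_0(\cl)$ as an abelian group; and each elementary object is a horizontal stacking of the basic diagrams $[\es]$ and $[i]$, so its class equals, up to an overall sign and a power of $[1]$, a monomial in the $[\mf{P}_i]$ and $[\es]$, which lies in the image of $\gamma$. Hence $\gamma\colon\Cl\twoheadrightarrow K_0(\cl)$ is a surjective algebra homomorphism, which proves the proposition. (That $\gamma$ is in fact an isomorphism $K_0(\cl)\cong\Cl$, as asserted in Theorem~\ref{thmcl}, is deferred: injectivity will follow from the faithfulness of the Fock space representation $V$ proved in Section~3 together with the categorical action constructed in Theorem~\ref{thmv}.)
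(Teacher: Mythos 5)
Your proposal is correct and follows essentially the same route as the paper: invoke the Bondal--Kapranov pretriangulated structure on one-sided twisted complexes for the triangulation of $H(\cl)$, define $\gamma$ on generators by $a_i\mapsto[(i)]$, and verify the three Clifford relations via the isomorphisms in $H(\cl)$ (contractibility of $(i,i)$ from $d(cr_{i,i})=id$, a shift $(j,i)\simeq(i,j)[\op{gr}(cr_{i,j})]$ for $|i-j|>1$, and the two-term twisted complex $(e+1,e)\simeq\{(\es)\ra(e,e+1)\}$ realizing the bypass triangle), with surjectivity formal since elementary objects generate. Minor imprecisions that do not affect the argument: the shift for $|i-j|>1$ is by $\op{gr}(cr_{i,j})=\pm1$ rather than always $+1$, and the class of an elementary object $(a_1,\dots,a_n)$ is exactly $[a_1]\cdots[a_n]$ with no extra sign or grading shift.
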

We will show that the map $\gamma$ is actually an isomorphism via the categorical action of $\cl$ in Section 6.

\subsection{Elementary objects and morphisms}
In this subsection we define the elementary objects of $\cl$ and the morphism sets as cochain complexes of $\F$-vector spaces.
We fix $\F$ as the ground field for morphisms throughout the paper to simplify computations.

\vspace{.2cm}
\n $\bullet$ {\bf Elementary objects:} the set of elementary objects $\E(\cl)$ of $\cl$ consists of sequences of integers $\{\mf{a}=(a_1, \dots, a_n) ~|~ a_i \in \Z, ~n \in \Z_{\geq 0}\}$.
The empty sequence corresponding to $n=0$ will be denoted by $(\es)$.

\vspace{.2cm}
\n $\bullet$ {\bf Morphisms:} the morphism set $\Hom(\mf{a}, \mf{b})$ for $\mf{a}, \mf{b} \in \E(\cl)$ is a cochain complex of $\F$-vector spaces generated by a set $D(\mf{a},\mf{b})$ of dotted labeled planar diagrams from the label $\mf{a}$ to the label $\mf{b}$, modulo local relations $\cal{L}$.
The composition of morphisms is given by stacking diagrams vertically.
A vertical stacking of two diagrams is defined to be zero if the labels at their endpoints do not match.

\vspace{.2cm}
\n $\bullet$ {\bf $D(\mf{a},\mf{b})$:}
for $\mf{a}=(a_1, \dots, a_n)$ and $\mf{b}=(b_1, \dots, b_m)$ in $\E(\cl)$, any diagram $f \in D(\mf{a},\mf{b})$ is obtained by vertically stacking finitely many {\em generating diagrams} in the strip $\R \times [0,1]$ such that endpoints of $f$ are $\{1, \dots, n\} \times \{0\}$ with labels $a_i$ for $1\leq i \leq n$ and $\{1, \dots, m\} \times \{1\}$ with labels $b_j$ for $1\leq j \leq m$.
All diagrams are read from bottom to top as morphisms.
Each generating diagram is a horizontal stacking of one {\em elementary diagram} with some trivial vertical strands.

\vspace{.2cm}
\n $\bullet$ {\bf Elementary diagrams:}
the elementary diagrams consist of $5$ types as shown in Figure \ref{1}:
\be
\item a vertical strand $id_i \in \Hom((i),(i))$;
\item a cup $cup_{i,i+1} \in \Hom((\es), (i,i+1))$;
\item a cap $cap_{i+1,i} \in \Hom((i+1,i), (\es))$;
\item a crossing $cr_{i,j} \in \Hom((j,i), (i,j))$;
\item a dotted strand $dot_{e} \in \Hom((e+2), (e))$ or $dot_{e+1} \in \Hom((e-1), (e+1))$, where $e$ denotes an even integer throughout the paper.
\ee
\begin{figure}[h]
\begin{overpic}
[scale=0.27]{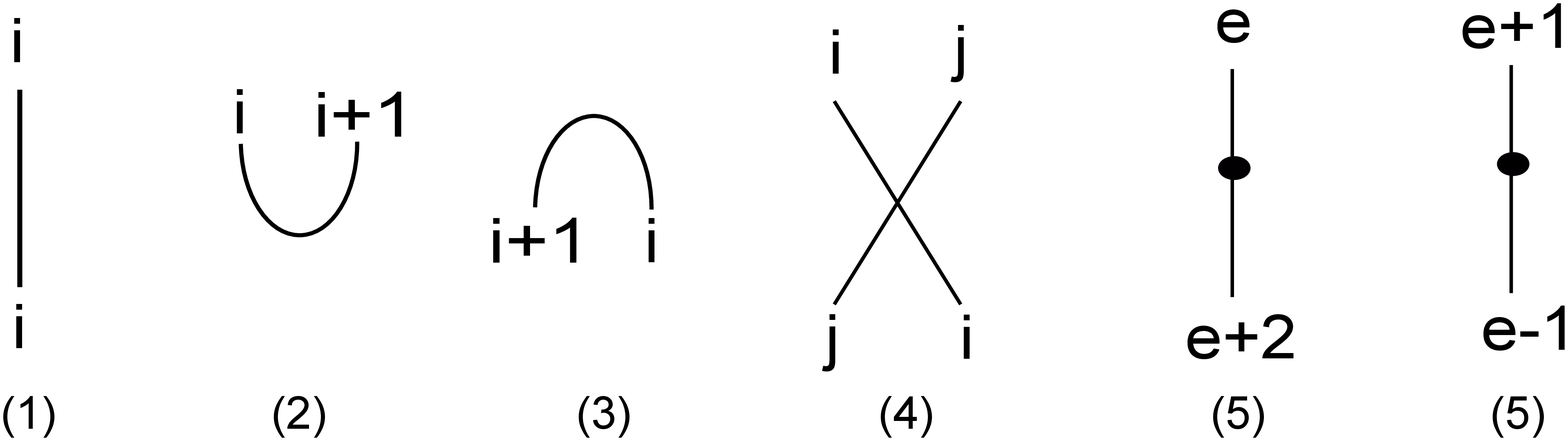}
\end{overpic}
\caption{The elementary diagrams of $\cl$}
\label{1}
\end{figure}

\vspace{.2cm}
\n $\bullet$ {\bf Local relations:}
the relations $\cal{R}$ consist of 6 groups:

\vspace{.1cm}
\n {\bf (R1)} Isotopy relation:
\begin{description}
\item[(R1-a)] vertical strands as idempotents;
\item[(R1-b)] isotopy of a single strand;
\item[(R1-c)] isotopy of a crossing;
\item[(R1-d)] isotopy of a dot through a cup or cap, where the isotopy class is denoted by a dotted cup $dup_{e+1,e} \in \Hom((\es),(e+1,e))$ or a dotted cap $dap_{e-1,e} \in \Hom((e-1,e),(\es))$ for simplicity;
\item[(R1-e)] isotopy of disjoint diagrams.
\end{description}
\begin{figure}[h]
\begin{overpic}
[scale=0.22]{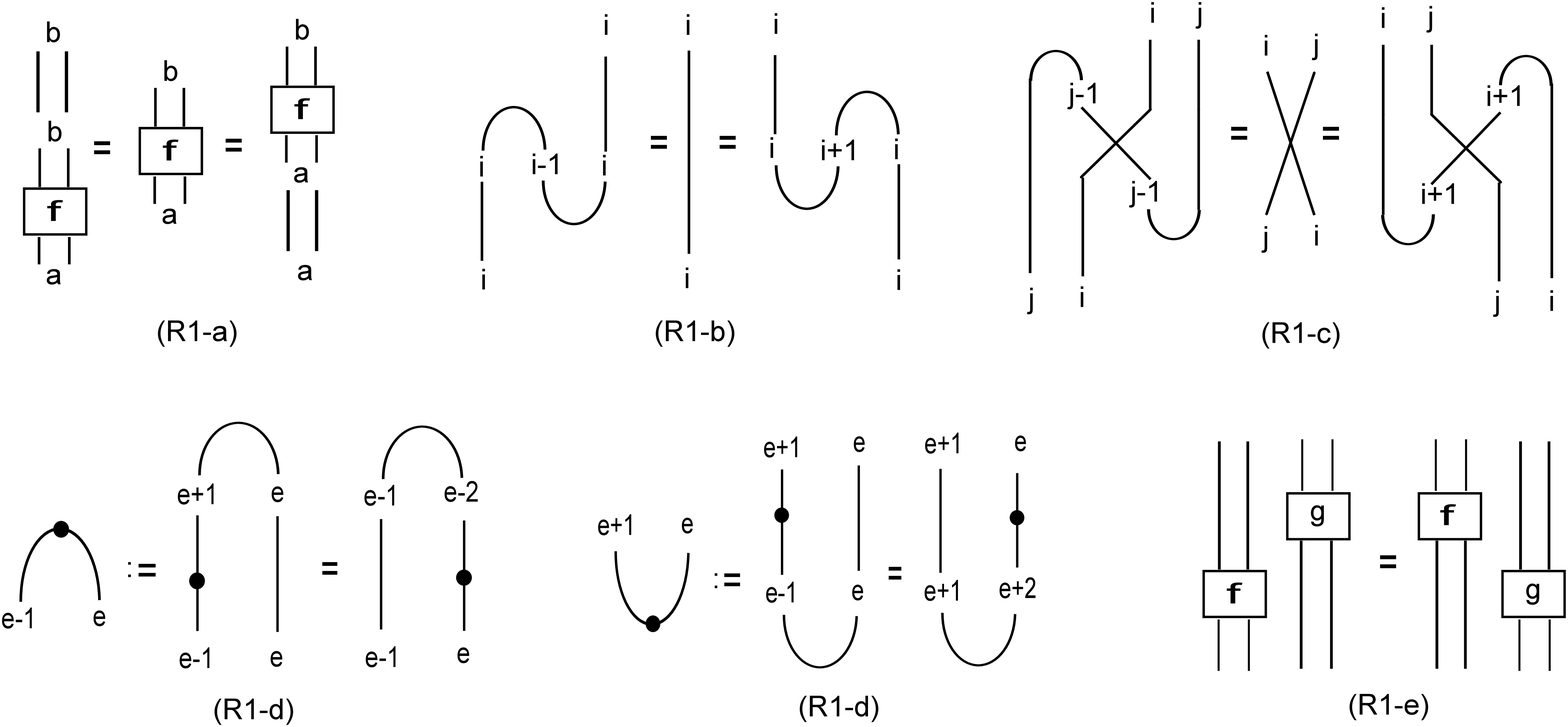}
\end{overpic}
\caption{Isotopies}
\label{2}
\end{figure}

\vspace{.1cm}
\n {\bf (R2)} Double dot relation: a strand with a double dot is zero.

\vspace{.1cm}
\n {\bf (R3)} Loop relation: a loop with one dot is the empty diagram, i.e., the identity $id_{\es} \in \Hom((\es),(\es))$.
\begin{figure}[h]
\begin{overpic}
[scale=0.22]{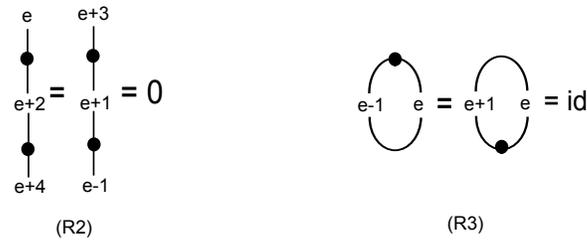}
\end{overpic}
\caption{Double dots and loops}
\label{3}
\end{figure}

\vspace{.1cm}
\n {\bf (R4)} Double crossing relation:
\begin{description}
\item[(R4-a)] a double crossing in $\Hom((e,e+1),(e,e+1))$ or $\Hom((e,e-1),(e,e-1))$ is the identity.
\item[(R4-b)] a double crossing in $\Hom((e+1,e),(e+1,e))$ or $\Hom((e-1,e),(e-1,e))$ is a sum of two terms.
\item[(R4-c)] a double crossing in $\Hom((i,i),(i,i))$ is zero.
\item[(R4-d)] a double crossing in $\Hom((i,j),(i,j))$ is the identity if $|i-j|>1$.
\end{description}

\n {\bf (R5)} Triple intersection moves for all labels:
\begin{figure}[h]
\begin{overpic}
[scale=0.2]{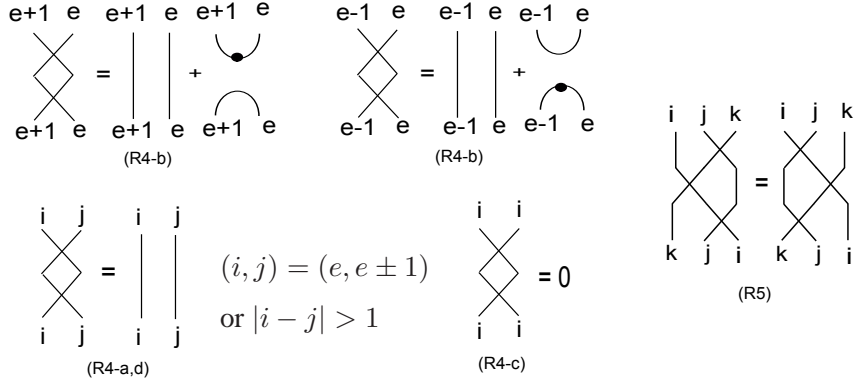}
\put(25,12){$(i,j)=(e,e\pm1)$}
\put(25,6){or $|i-j|>1$}
\end{overpic}
\caption{Double crossing and triple intersection moves}
\label{4}
\end{figure}

\n {\bf (R6)} Dot slide through a crossing.
\begin{figure}[h]
\begin{overpic}
[scale=0.22]{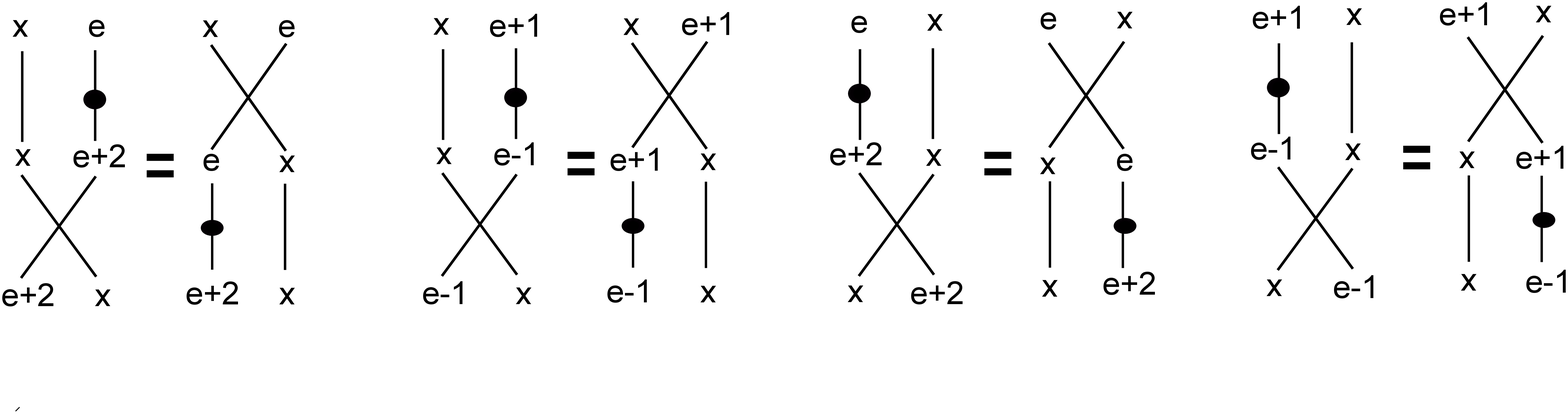}
\put(30,0){$x \neq e,e-1$}
\put(55,0){$x \neq e+1,e+2$}
\put(3,0){$x \neq e,e+1$}
\put(83,0){$x \neq e,e+1$}
\end{overpic}
\caption{Dot slide}
\label{5}
\end{figure}

\n $\bullet$ {\bf Monoidal structure:}
The monoidal functor on the elementary objects and morphisms is given by the horizontal stacking.
Let $\mf{a}\ot \mf{b}$ denote a horizontal stacking of $\mf{a}, \mf{b} \in \E(\cv)$.

\vspace{.2cm}
\n $\bullet$ {\bf Cohomological grading:}
A cohomological grading $\op{gr}$ is defined on the elementary diagrams as zero except for crossings $cr_{i,j}$:
$$ \op{gr}(cr_{i,j})= \left\{
\begin{array}{cl}
(-1)^{i-j} & \mbox{if} \hspace{0.3cm} i<j; \\
(-1)^{i-j+1} & \mbox{if} \hspace{0.3cm} i\geq j.
\end{array}\right.
$$
The grading is extended to general diagrams additively with respect to the vertical and horizontal stacking.
Two sides of any relation have the same grading.
Hence $\op{gr}$ defines a grading on the morphism sets.
Let $\Hom(\mf{a},\mf{b})=\oplus_i\Hom^i(\mf{a},\mf{b})$ be the decomposition according to the grading.

\begin{rmk}
\n(1) Morphisms are generated by diagrams up to isotopy relative to boundary by (R1).

\n(2) The object $(i)$ is left adjoint to $(i-1)$ and right adjoint to $(i+1)$ by (R1-a), (R1-b) and (R1-e):
\begin{gather*}
\op{Hom}((i)\ot\mf{a}, \mf{b})\cong \op{Hom}(\mf{a},(i-1)\ot\mf{b}),\\
\op{Hom}(\mf{a}\ot(i), \mf{b})\cong \op{Hom}(\mf{a},\mf{b}\ot(i+1)).
\end{gather*}

\n(3) A dotted strand either increases odd labels or decreases even labels by $2$.

\n(4) Double crossing relations (R4-a,b) depend on the parity of labels.
There are similar relations in the graphical calculus for the Heisenberg algebra \cite{Kh3}.
The main difference from there is that the double crossing in $\Hom((i,i),(i,i))$ is zero in our case.

\n(5) It is helpful to check the grading for (R6).
For instance, two crossings $cr_{x,e+2}$ and $cr_{x,e}$ in the first relation have the same grading if and only if $x \neq e, e+1$.
\end{rmk}

\n $\bullet$ {\bf More relations:} we deduce more relations which will be useful later from the defining relations.
\begin{lemma} \label{curl}
The following relations hold in $\cl$:
\begin{figure}[h]
\begin{overpic}
[scale=0.2]{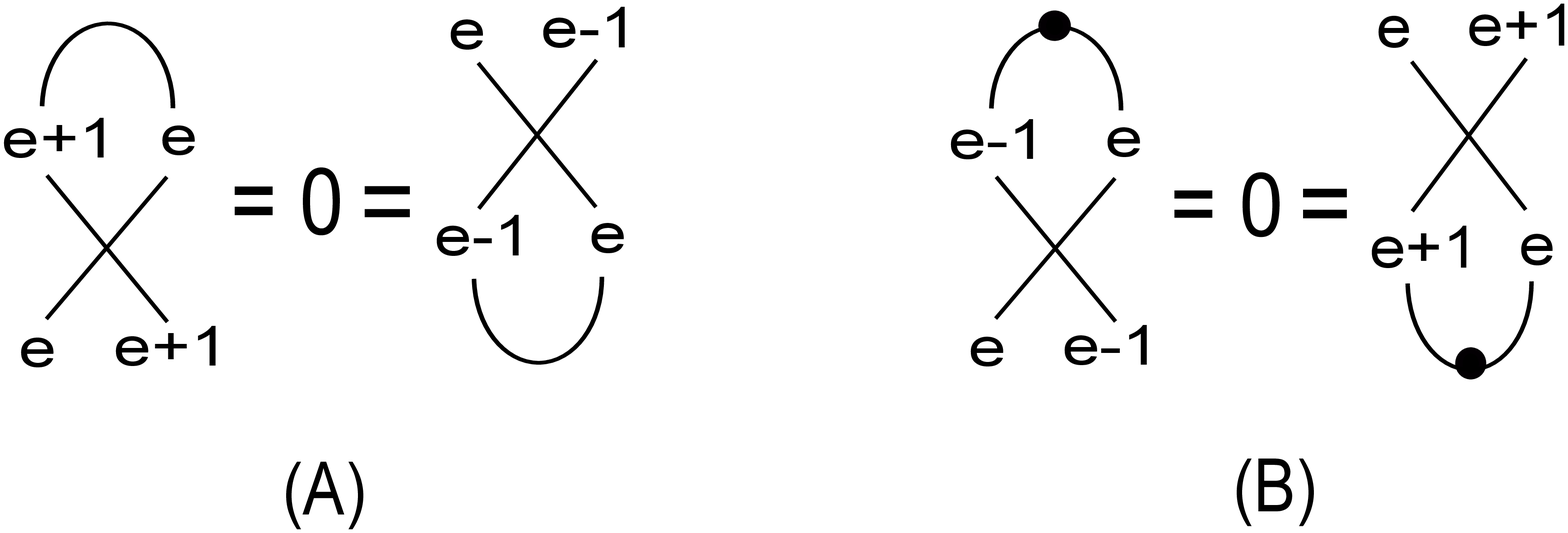}
\end{overpic}
\caption{More relations}
\label{7}
\end{figure}
\end{lemma}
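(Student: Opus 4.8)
The plan is to derive each identity in Figure~\ref{7} from the defining relations (R1)--(R6), treating the diagrams one at a time and using the fact (Remark~(1)) that a morphism is determined by its diagram up to boundary-fixing isotopy, so that strands, crossings, cups, caps and dots may be freely isotoped before any algebraic relation is applied.

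For a \emph{curl} --- a strand with a small kink created by a crossing that doubles back through a cup or a cap --- I would first isotope, using (R1-b), (R1-c) and (R1-d), so that the crossing sits immediately next to the turnback, producing locally either a nested crossing--cup/cap configuration or an honest double crossing. Resolving this configuration by the appropriate case of (R4) is the heart of the argument: when the two relevant strands carry labels of the form $(e,e+1)$ or $(e,e-1)$, or labels differing by more than $1$, (R4-a) or (R4-d) yields a clean cancellation and the curl straightens to a single strand, possibly acquiring a dot after an application of (R1-d); when the labels have the form $(e+1,e)$ or $(e-1,e)$, (R4-b) expands the double crossing into a sum of two terms, the ``identity'' term plus a term containing a small loop or a dotted strand; and the equal-label case $(i,i)$ collapses the curl to zero by (R4-c). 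In each branch I would then push the surviving crossing out of the relevant region with a triple-intersection move (R5) and move dots across crossings via (R6), always respecting the parity constraints ``$x \neq e, e\pm 1$'' that control when a dot may be slid, and invoking Remark~(3) to keep track of whether a dotted strand raises an odd label or lowers an even label.

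It remains to evaluate the leftover closed pieces produced above. A loop carrying a single dot is the empty diagram $id_{\es}$ by (R3), a strand carrying two dots is zero by (R2), and --- since morphisms are $\F$-linear --- the dot-slide (R6) together with (R1-d) lets me move dots onto or off of a closed component as needed, so each residual term is either killed outright or reduced to a dotted identity strand. Assembling the surviving terms, and checking with the formula for $\op{gr}(cr_{i,j})$ that both sides of each claimed identity lie in the same cohomological degree (cf.\ Remark~(5)), completes the derivation.

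I expect the main obstacle to be bookkeeping rather than any single conceptual point. Because the dot-slide (R6) and the dichotomy (R4-a) versus (R4-b) both depend on the parity of the labels, each curl identity in Figure~\ref{7} splits into several sub-cases according to whether the labels involved are even or odd and whether they coincide, are adjacent, or are far apart; in each sub-case one must separately confirm that the two terms produced by (R4-b) and the cohomological degree shifts dictated by $\op{gr}$ combine consistently, so that the various branches really do assemble into the single stated relation.
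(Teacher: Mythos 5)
Your plan follows essentially the same route as the paper: isotope the curl so that a double crossing sits next to the turnback, resolve it by (R4-a,b), and evaluate the residual dotted loop by (R3). That is exactly what the paper does for the first identity of (A) in Figure~\ref{8}, and the remaining identities are handled by the same scheme with the clean-up moves (R5), (R6) you mention.

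One small caution on your case analysis: a cup or cap always carries adjacent labels $(i,i+1)$ or $(i+1,i)$, so the branches you list for ``labels differing by more than $1$'' (invoking (R4-d)) and for ``equal labels $(i,i)$'' (invoking (R4-c)) never actually arise when a crossing meets a cup or cap. The only genuine dichotomy at the turnback is (R4-a) versus (R4-b), controlled by the parity of the labels, and it is precisely (R4-b) that produces the second term containing the dotted loop which (R3) then collapses. Trimming those spurious subcases would tighten the bookkeeping you rightly worry about, but the core of the argument matches the paper's.
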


\begin{proof}
The proof of the first equation in (A) is given in Figure \ref{8} and uses (R3) and (R4-a,b).
\begin{figure}[h]
\begin{overpic}
[scale=0.20]{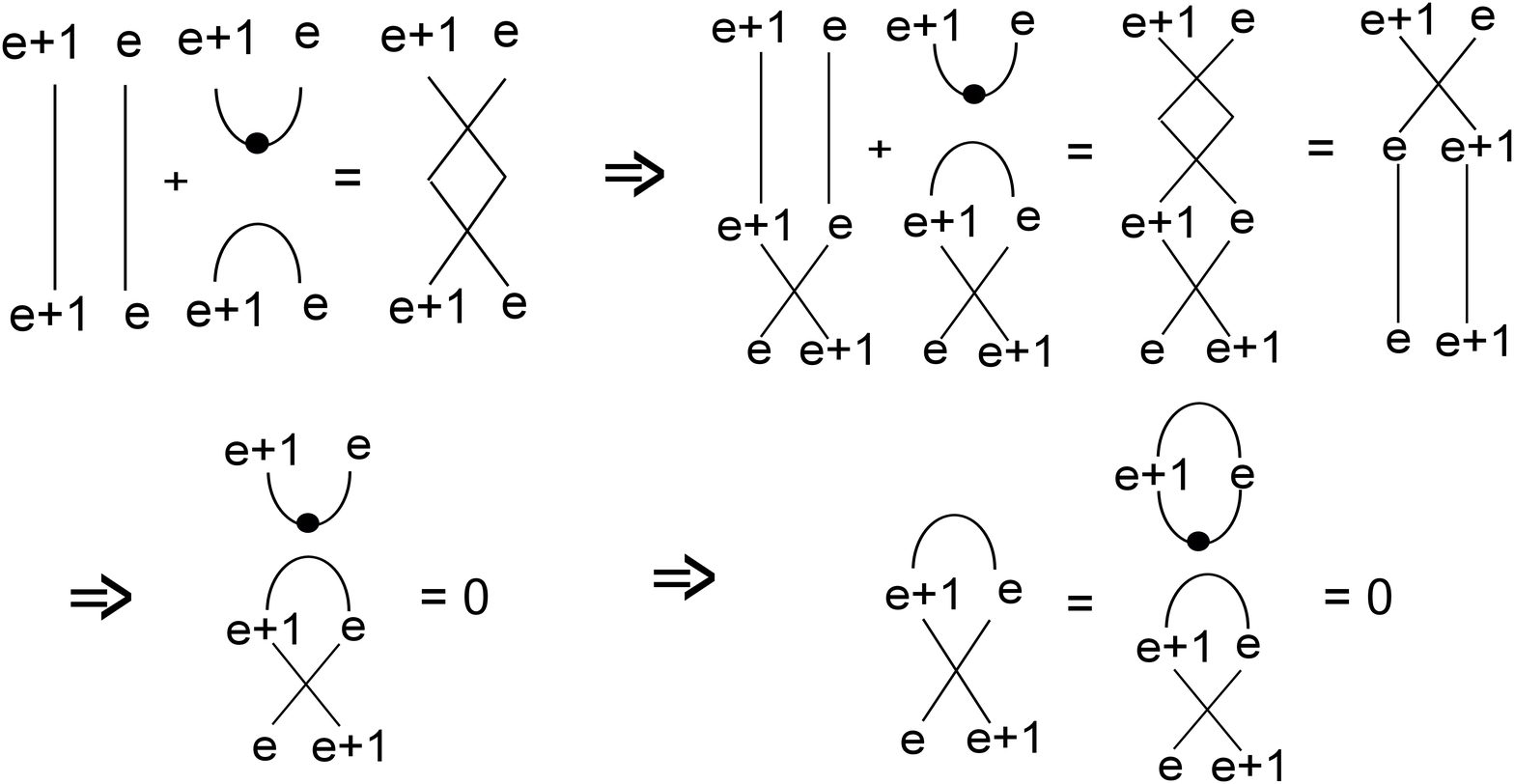}
\end{overpic}
\caption{}
\label{8}
\end{figure}

The verification of the other relations is left to the reader.
\end{proof}

\n $\bullet$ {\bf Differential:}
A differential $d$ is defined on the elementary diagrams in Figure \ref{6} and extended to general diagrams by Leibniz's rule with respect to vertical and horizontal stackings.
It is easy to see that $d$ increases the grading by $1$.
Note that the differential of a crossing $cr_{i,j}$ depends on $i-j$.
\begin{figure}[h]
\begin{overpic}
[scale=0.2]{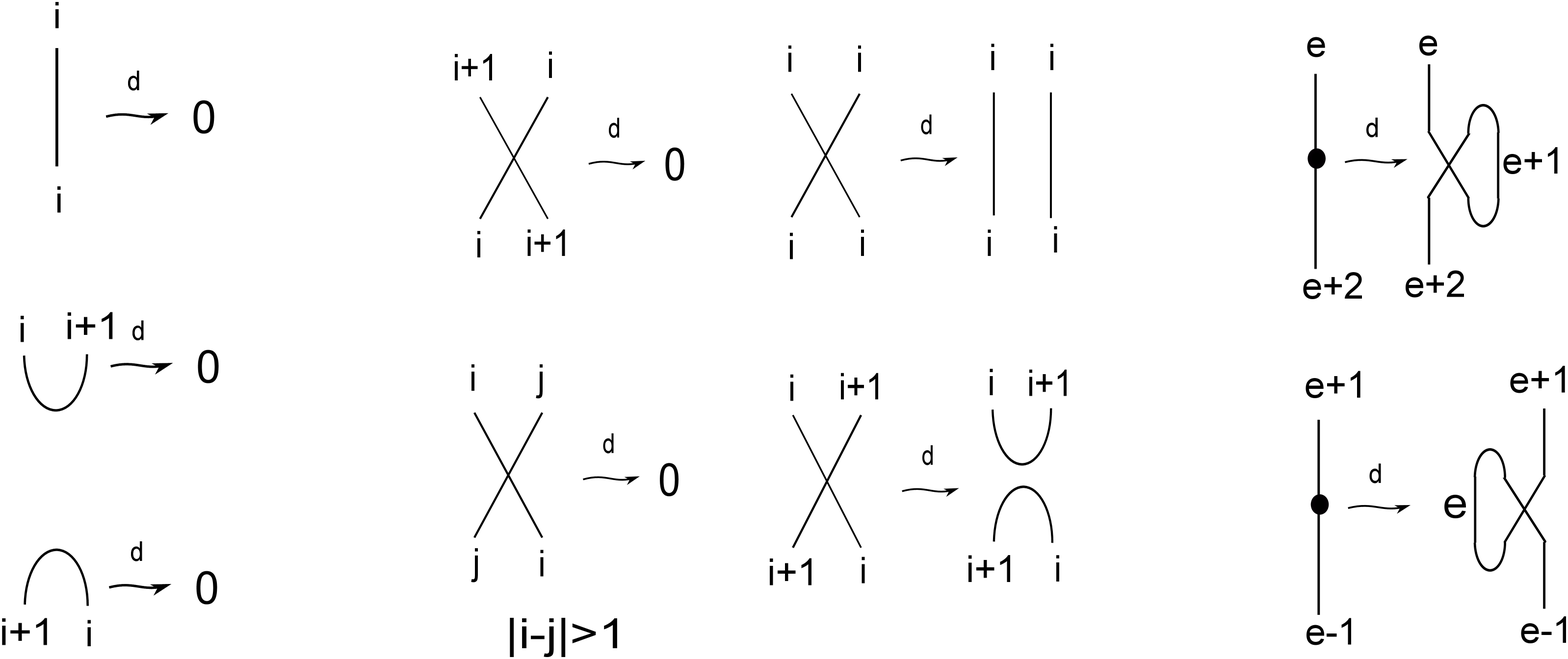}
\end{overpic}
\caption{The differential on the elementary diagrams}
\label{6}
\end{figure}

\begin{lemma}
The relations $\cal{R}$ are preserved under $d$.
\end{lemma}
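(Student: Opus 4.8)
The statement to prove is that the defining relations $\cal{R}$ (the groups (R1)--(R6)) are preserved under the differential $d$, i.e., for each local relation $L_1 = L_2$ we must check $d(L_1) = d(L_2)$ as morphisms in $\cl$. Since $d$ is defined on elementary diagrams and extended by the (graded) Leibniz rule, the plan is simply to go through the relation groups one at a time and compute both sides, using the already-established relations in $\cal{R}$ (and Lemma \ref{curl}) to identify the resulting diagrams. I will organize the verification by the structure of $d$ on generators: $d$ vanishes on $id_i$, $cup_{i,i+1}$, $cap_{i+1,i}$, and the dotted strands, and is nonzero only on crossings $cr_{i,j}$, where $d(cr_{i,j})$ depends on the residue of $i-j$ (as recorded in Figure \ref{6}); so the only relations requiring genuine work are those involving crossings, namely (R1-c), (R4), (R5), and (R6), together with (R1-d) and (R2), (R3) where one checks that the sole contributions come from isotopy or vanish by the double-dot/loop relations.

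The key steps, in order: (i) dispatch (R1-a), (R1-b), (R1-e) immediately, since every diagram involved is $d$-closed (no crossings), and (R1-d), where $d$ of a dot-through-cup/cap is $0 = 0$ because dots and cups/caps are $d$-closed and the isotopy does not create crossings; (ii) handle (R2) and (R3): applying $d$ to a double dot or a one-dotted loop yields a diagram that is again a double dot or a dotted loop (or vanishes term-by-term), so both sides map to $0$ or reduce via (R2), (R3) themselves; (iii) verify (R1-c), isotopy of a crossing past a strand -- here $d(cr)$ picks up dotted/resolved terms on both sides and one checks they match after an isotopy, which is essentially formal since $d(cr_{i,j})$ is itself isotopy-invariant; (iv) verify (R4-a,b,c,d): this is the technical core -- apply $d$ to a double crossing, use Leibniz to get a sum of four terms (each with one crossing differentiated), rewrite each using the value of $d(cr_{i,j})$ appropriate to the parity of the labels, and then collapse the result using (R4) again, the triple-point moves (R5), Lemma \ref{curl}(A), and (R2)--(R3); the parity dependence of both $d(cr)$ and of (R4-a) vs.\ (R4-b) must be tracked carefully, and the case $(i,i)$ where the double crossing is zero should give $d(0)=0$ consistently; (v) verify (R5), the triple intersection moves, for each label type $(i,j) = (e,e\pm1)$ and $|i-j|>1$ -- apply Leibniz to get three terms on each side and match them using (R4), (R6), and lower relations, much as in the Reidemeister-III-type checks in Khovanov's Heisenberg calculus \cite{Kh3}; (vi) verify (R6), the dot-slide moves: differentiate each side, noting the side conditions $x \neq e, e\pm1$ etc.\ that were arranged precisely so that gradings (and hence the relevant $d(cr)$ terms) match, and reduce using (R2) and the other dot-slide relations.

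The main obstacle will be step (iv), the double-crossing relations (R4-a,b), and to a lesser extent the triple-point moves (R5). The difficulty is bookkeeping: $d$ of a double crossing expands by Leibniz into several diagrams, each of which must be re-expressed in terms of the elementary generators and then recognized, via the full relation set $\cal{R}$ together with Lemma \ref{curl}, as matching $d$ of the right-hand side; the parity-sensitive form of $d(cr_{i,j})$ and the fact that (R4-a) and (R4-b) have different right-hand sides (identity versus a sum of two terms) means the cancellations are not uniform across cases and each parity configuration $(e,e+1)$, $(e,e-1)$, $(e+1,e)$, $(e-1,e)$, $(i,i)$, $|i-j|>1$ has to be treated separately. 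A secondary subtlety, worth flagging explicitly, is consistency of the grading: since $d$ raises $\op{gr}$ by $1$ and every relation in $\cal{R}$ is grading-homogeneous, both sides of $d(L_1)=d(L_2)$ automatically lie in a single graded piece, which constrains -- and thereby guides -- the reductions. The remaining groups ((R1) isotopies, (R2), (R3)) are routine once one observes that $d$ introduces no new crossings there, and (R6) is designed so that its side conditions make the check immediate.
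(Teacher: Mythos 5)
Your proof plan contains a genuine error in its organizing principle: you claim that $d$ vanishes on the dotted strands, and consequently treat (R1-d), (R2), and (R3) as essentially trivial ("$0=0$" or "a double dot maps to a double dot"). This is not the case in $\cl$. The paper makes this explicit in Section 4.2.1 when it motivates the construction of the differential in $\cv$: \emph{``a differential of a dotted strand contains a cup and a cap which do not exist in $\cv$.''} So $d(dot_e)$ and $d(dot_{e+1})$ are nonzero diagrams built from a cup and a cap (and the dotted cups/caps $dup$, $dap$ also have nonzero differential, e.g., $d(dup_{2i+1,2i}) = cup_{2i,2i+1} * cr_{2i+1,2i}$ appears in Section 6). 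Consequently $d$ of a double dot is not again a double dot, $d$ of a dotted loop is not a dotted loop, and the two sides of (R1-d) are not automatically $d$-closed.

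This matters because it inverts which cases are ``routine.'' The paper's own proof verifies precisely (R1-c), (R1-d), (R2), and (R4-b) in Figure~\ref{9} and leaves the rest to the reader --- i.e., the dot-related relations (R1-d) and (R2) are among the cases the author found worth doing explicitly, exactly because of the nonzero $d(dot)$. Your steps (i)--(ii) would need to be redone: for (R1-d) you must track that the cup-cap term produced by $d(dot)$ isotopes correctly through the cup or cap on both sides; for (R2) you must show that composing $d(dot)$ with the remaining dot yields a diagram that vanishes by (R2) or by the other local relations; and for (R3) you must check that the extra cup-cap term introduced when differentiating the dot on the loop is killed (e.g., by (R2) or a curl reduction from Lemma~\ref{curl}). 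The remainder of your outline --- treating (R4), (R5), (R6) by Leibniz expansion and careful parity-by-parity bookkeeping, and using the grading-homogeneity of relations as a sanity check --- is consistent in spirit with the paper's case-by-case pictorial verification, but the overall argument cannot stand until the false premise that dotted strands are $d$-closed is removed.
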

\begin{proof}
We prove the lemma for (R1-c,d), (R2) and (R4-b) in Figure \ref{9} and leave the others to the reader.
\begin{figure}[h]
\begin{overpic}
[scale=0.2]{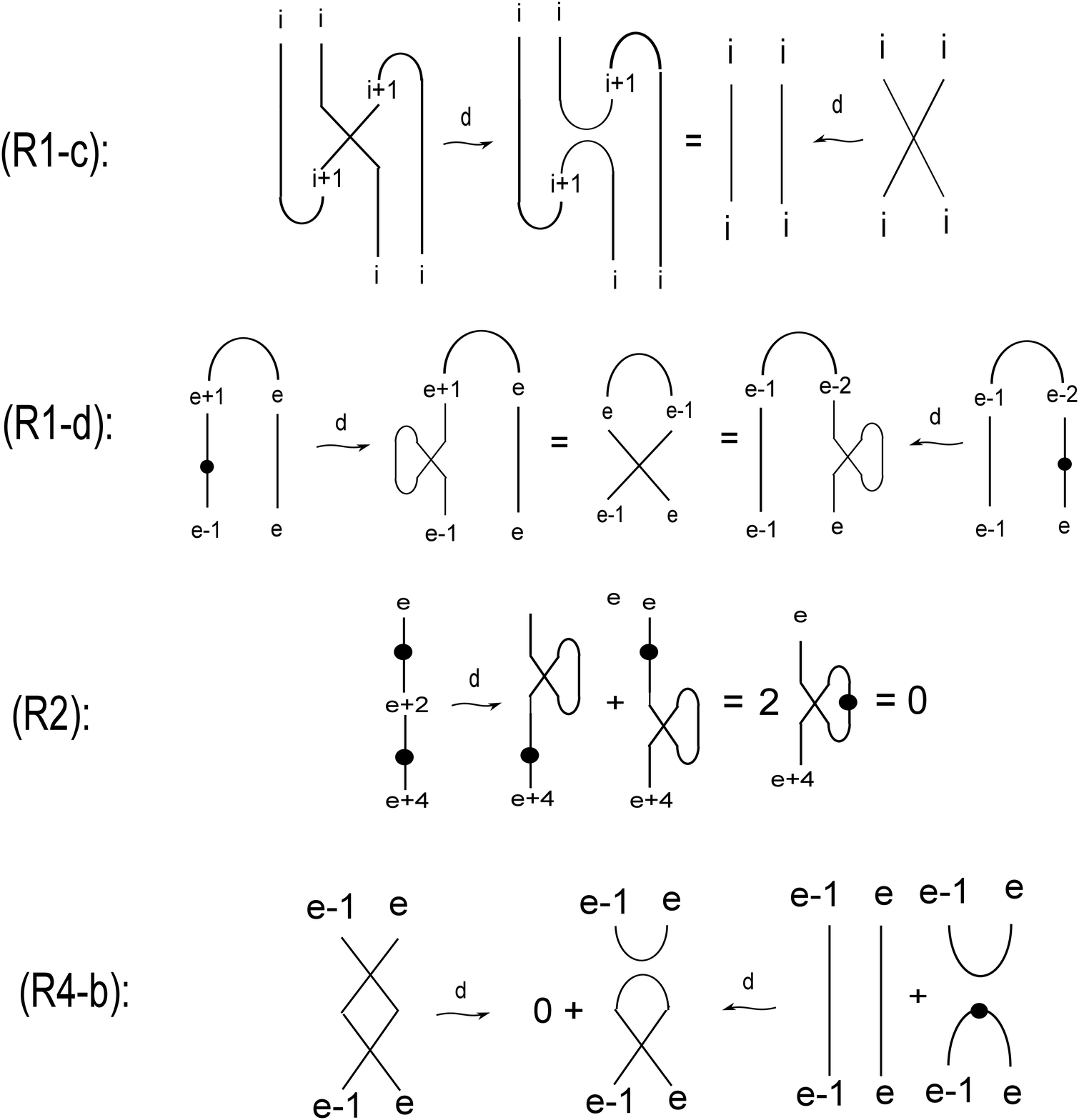}
\end{overpic}
\caption{Relations under differential}
\label{9}
\end{figure}
\end{proof}

\begin{lemma}
$d^2=0$.
\end{lemma}
\begin{proof}
This follows from $d^2=0$ on the elementary diagrams.
\end{proof}

\begin{rmk}
It is not clear to the author how to compute $\F$-bases for any $\Hom(\mf{a},\mf{b})$.
For instance, it is hard to unlink a family of linked circles in $\Hom ((\es),(\es))$ since we can perform a dot slide only under certain conditions.
In particular, we do not know if $\Hom ((\es),(\es))$ is a $1$-dimensional $\F$-vector space generated by $id_{\es}$.
We will show that $\cl$ does not collapse by constructing a nontrivial categorical action in Section 6.
\end{rmk}

\subsection{Definition of $\cl$}
We enlarge the elementary objects to {\em one-sided twisted complexes} of elementary objects following \cite{BK}.
Let $\mf{a}[m]$ denote an elementary object $\mf{a}$ with a cohomological grading shifted by $m$,
i.e.,
$$\Hom^i(\mf{a}[m],\mf{b})\cong \Hom^{i-m}(\mf{a},\mf{b}), \quad \Hom^i(\mf{a},\mf{b}[m])\cong \Hom^{i+m}(\mf{a},\mf{b}).$$
The objects of the DG category $\cl$ are {\em one-sided twisted complexes} of elementary objects:
$$\{(\{\mf{a}^i[m_i]\}_{i=1}^{n}, f=\sum_{i < j}f^j_i) ~|~ f^j_i \in \Hom^1(\mf{a}^i[m_i],\mf{a}^j[m_j]), \sum_k (f^j_k \circ f^k_i) + d(f^j_i)=0 ~\mbox{for all}~ i,j\}.$$

\begin{rmk}
The term ``twisted" refers to the condition as a deformation of $\sum_k (f^j_k \circ f^k_i)=0$ in ordinary complexes.
The term ``one-sided" refers to the condition $f^j_i=0$ for $i\geq j$.
The notion of {\em one-sided twisted complex} was introduced in \cite[Section 4, Definition 1]{BK}.
\end{rmk}

The morphism set $\Hom((\{\mf{a}^i[m_i]\}, f), (\{\mf{b}^j[n_j]\}, g))$ is an $\F$-vector space spanned by $\{h_i^j \in \Hom(\mf{a}^i[m_i], \mf{b}^j[n_j])\}$ with a differential $d_{cx}$:
$$d_{cx}(h_i^j)=\sum_{i'} (h_i^j \circ f_{i'}^i) + \sum_{j'} (g^{j'}_j \circ h^j_i) + d(h_i^j).$$

\begin{example}
An object of $\cl$ is given by a complex of three terms twisted by an additional morphism $f_1^3$:
\begin{gather*}
(\{(0,0)[2], (0,0)[1], (0,0)\}, f=f_1^2+f_2^3+f_1^3), \\
f_1^2=id_{(0,0)} \in \Hom^1((0,0)[2], (0,0)[1]), \\
f_2^3=id_{(0,0)} \in \Hom^1((0,0)[1], (0,0)), \\
f_1^3=cr_{0,0} \in \Hom^1((0,0)[2], (0,0)), \\
\mbox{such that}~ f_2^3 \circ f_1^2 + d(f_1^3) = id_{(0,0)} \circ id_{(0,0)} + d(cr_{0,0}) =0.
\end{gather*}
$$\xymatrix{
(0,0) \ar[r]^{f_1^2} \ar@/_1pc/[rr]_{f_1^3} & (0,0) \ar[r]^{f_2^3} & (0,0)
}
$$
\end{example}

The monoidal structure on the elementary objects extends to one-sided twisted complexes in the usual way:
$$\begin{array}{cccccc}
\ot: & \cl & \times & \cl & \ra & \cl \\
& (\{\mf{a}^i[m_i]\}, f) &,& (\{\mf{b}^j[n_j]\}, g) & \mapsto & (\{\mf{a}^i \ot \mf{b}^j[m_i+n_j]\}, f\ot id + id \ot g).
\end{array}$$

We finally have a strict monoidal DG category $\cl$.

\subsection{The Grothendieck group $K_0(\cl)$}
We refer to \cite{Ke} for an introduction to DG categories and their homology categories.
Let $H(\cl)$ denote the $0$th homology category of $\cl$, which is a triangulated category by \cite[Section 4, Proposition 2]{BK}.
Let $K_0(\cl)$ denote the Grothendieck group of $H(\cl)$.
The induced monoidal functor $\ot: H(\cl) \times H(\cl) \ra H(\cl)$ is bi-exact and hence descends to a multiplication $K_0(\ot): K_0(\cl) \times K_0(\cl) \ra K_0(\cl)$.

Let $[\mf{a}]$ and $[i] \in K_0(\cl)$ denote the classes of elementary objects $\mf{a}$ and $(i) \in \cl$ for $i \in \Z$.
Let $[i][j]$ denote the multiplication of $[i]$ and $[j]$.
Then $[\mf{a}]=[a_1]\cdots[a_n] \in K_0(\cl)$ for $\mf{x}=(a_1,\dots, a_n)$.
Hence, $K_0(\cl)$ is a $\Z$-algebra generated by $\{[i] ~|~ i \in \Z\}$ with unit $[\es]$.

\begin{lemma} \label{K0cl}
The following isomorphisms hold in the triangulated category $H(\cl)$:
\be
\item $(i,i) \simeq 0$.
\item $(e+1,e) \simeq \{(\es) \xra{cup_{e,e+1}} (e,e+1)\}$, where the term $(\es)$ is in cohomological grading $0$.
\item $(e-1,e) \simeq \{(e,e-1) \xra{cap_{e,e-1}} (\es)\}$, where the term $(\es)$ is in cohomological grading $0$.
\item $(j,i) \simeq (i,j)[\op{gr}(cr_{i,j})]$ if $|i-j|>1$.
\ee
\end{lemma}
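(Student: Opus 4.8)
The plan is to exhibit, for each of the four items, an explicit morphism between the twisted complexes and a homotopy inverse, using only the local relations $\cal{R}$ and the consequences collected in Lemma~\ref{curl}. Since $H(\cl)$ is triangulated and mapping cones of morphisms $f\in\Hom^1$ are objects of $\cl$ by construction, each right-hand side is a genuine object, and it suffices to produce a closed degree-$0$ morphism in each direction whose two composites are chain-homotopic to the respective identities via $d_{cx}$.

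\medskip
\noindent\emph{Item (1).} Here I would argue that $(i,i)$ is a \emph{contractible} object: by (R4-c) the double crossing $cr_{i,i}\circ cr_{i,i}\in\Hom((i,i),(i,i))$ is zero, while by the differential in Figure~\ref{6} the element $cr_{i,i}$ (or an appropriate dotted diagram on $(i,i)$, depending on the parity of $i$) has differential equal to $id_{(i,i)}$ up to the relations. Concretely, I expect $d$ applied to the generator that lands in $\Hom^0((i,i),(i,i))$ to be $id_{(i,i)}$, exhibiting a nullhomotopy of the identity; hence $(i,i)\simeq 0$ in $H(\cl)$. The only care needed is to read off the correct differential from Figure~\ref{6} and to use (R2), (R4-c) to kill the cross-terms.

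\medskip
\noindent\emph{Items (2) and (3).} These are dual, so I treat (2); (3) follows by turning the diagrams upside down and using the cap relations in place of the cup relations. Let $C=\{(\es)\xra{cup_{e,e+1}}(e,e+1)\}$ with $(\es)$ in degree $0$ and $(e,e+1)$ in degree $1$ (so that $cup_{e,e+1}\in\Hom^1$, consistent with $\op{gr}(cup)=0$ after the shift). I would define a map $(e+1,e)\to C$ whose component into $(e,e+1)$ is the crossing $cr_{e+1,e}$ and whose component into $(\es)$ is the dotted cap $dap$ built from $cap_{e+1,e}$ and a dot (using (R1-d)); and a map $C\to (e+1,e)$ whose component out of $(\es)$ is the dotted cup $dup$ and whose component out of $(e,e+1)$ is $cr_{e,e+1}$. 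That these are closed morphisms is exactly the twisting condition, which reduces to $d(cr)=\pm\,cup\circ cap$ type identities from Figure~\ref{6}; the composite $C\to(e+1,e)\to C$ and $(e+1,e)\to C\to (e+1,e)$ are identities up to homotopy by the double-crossing relation (R4-b) — the ``sum of two terms'' there is precisely what produces $id$ plus a boundary — together with the loop relation (R3) and Lemma~\ref{curl}(A) to simplify the cup–cap–dot combinations. This is the step I expect to be the main obstacle: getting all the signs (really, the $\F$-coefficients are trivial, so the issue is matching which of the two terms in (R4-b) is the identity and which is the homotopy boundary) and verifying the twisting/closedness conditions term by term; the diagrammatic bookkeeping in Figure~\ref{8}-style calculations is where the real work lies.

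\medskip
\noindent\emph{Item (4).} When $|i-j|>1$ the crossing $cr_{i,j}$ has differential zero (by Figure~\ref{6}, since the nontrivial differential of a crossing only occurs in the adjacent cases), so $cr_{i,j}\in\Hom^{\op{gr}(cr_{i,j})}((j,i),(i,j))$ is a closed morphism of the appropriate degree, i.e.\ a degree-$0$ morphism $(j,i)\to(i,j)[\op{gr}(cr_{i,j})]$. Its inverse is $cr_{j,i}$, and the two composites are $id$ by the double-crossing relation (R4-d). Hence the map is an isomorphism in $H(\cl)$, with no homotopy needed. I would present this last case first as a warm-up, then do (1), then (2)–(3) as the substantive part, closing with the remark that (3) is obtained from (2) by the evident vertical flip of all diagrams.
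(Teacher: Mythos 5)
Your proposal follows the paper's proof almost exactly: the author also deduces (1) from $id_{(i,i)}=d(cr_{i,i})$, proves (2)--(3) by exhibiting explicit closed morphisms $f,g$ (built from caps, dotted cups, and crossings) whose two composites are identities via (R3), (R4-a,b), and Lemma~\ref{curl}, and proves (4) directly from (R4-d). Two small slips worth noting: for item (2) the component of $f$ landing in $(\es)$ should be the plain cap $cap_{e+1,e}$, not a dotted cap $dap$ (which has source $(e-1,e)$, not $(e+1,e)$, so does not even typecheck), and in the paper the composites come out as literal identities rather than merely chain-homotopic to them.
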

\begin{proof}
\n(1) The identity $id_{(i,i)}=d(cr_{i,i}) \in \Hom_{\cl}((i,i),(i,i))$ implies that
$$id_{(i,i)}=0 \in \Hom_{H(\cl)}((i,i),(i,i)).$$
Hence, the zero morphisms give isomorphisms $(i,i) \simeq 0$ in $H(\cl)$.

\vspace{.1cm}
\n(2) We construct two morphisms $f \in \Hom((e+1,e), ~\{(\es) \xra{cup_{e,e+1}} (e,e+1)\})$ and $g \in \Hom(\{(\es) \xra{cup_{e,e+1}} (e,e+1)\}, ~(e+1,e))$.
The two compositions of $f$ and $g$ are given in Figure \ref{10}.
It is easy to check that $df=dg=0$.
Hence $f$ and $g$ represent morphisms in $H(\cl)$.
The compositions are identities by (R4-a,b) and Lemma \ref{curl}.
\begin{figure}[h]
\begin{overpic}
[scale=0.25]{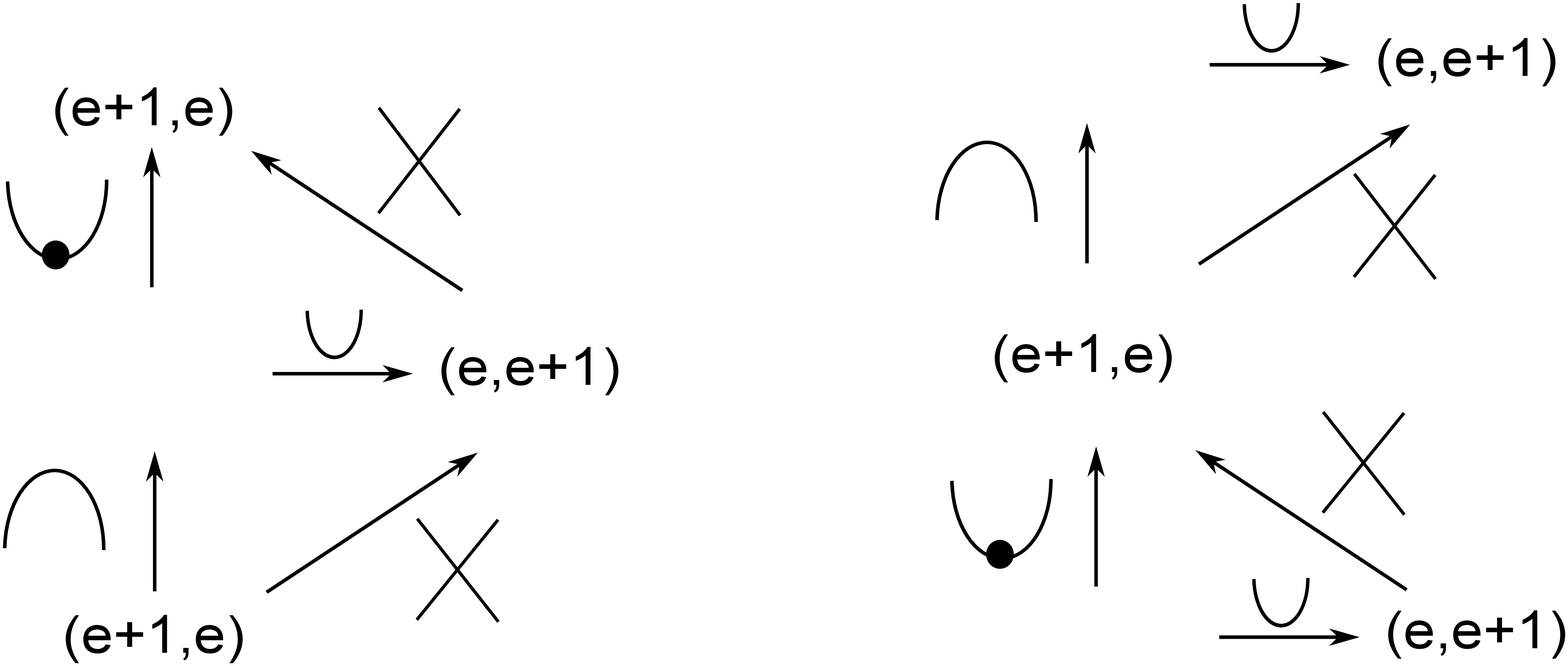}
\put(8,17){$(\es)$}
\put(68,0){$(\es)$}
\put(66,38){$(\es)$}
\end{overpic}
\caption{Isomorphism between $(e+1,e)$ and $\{(\es) \xra{cup_{e,e+1}} (e,e+1)\}$}
\label{10}
\end{figure}

\vspace{.1cm}
\n(3) Similar to (2).

\vspace{.1cm}
\n(4) The crossings $cr_{i,j}$ and $cr_{j,i}$ between $(j,i)$ and $(i,j)[\op{gr}(cr_{i,j})]$ are isomorphisms by (R4-d).
\end{proof}

\begin{cor} \label{relation}
The following relations hold in $K_0(\cl)$:
\begin{gather*}
[i]^2=0;\\
[i][j]=-[j][i] \hspace{.2cm} \mbox{if} \hspace{.2cm} |i-j|>1;\\
[i][i+1]+[i+1][i]=[\es]=1.
\end{gather*}
\end{cor}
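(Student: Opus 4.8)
The plan is to derive each of the three relations in $K_0(\cl)$ directly from the isomorphisms established in Lemma~\ref{K0cl}, using the fact that a distinguished triangle $A \to B \to C \to A[1]$ in a triangulated category yields $[B] = [A] + [C]$ in the Grothendieck group, and that a cohomological shift $[m]$ multiplies the class by $(-1)^m$. Concretely, recall that a two-term one-sided twisted complex $\{X \xra{\phi} Y\}$ (with $X$ in some cohomological degree and $Y$ in the next) is the cone of $\phi$, so it fits into a distinguished triangle with $X$ and $Y$ and therefore $[\{X \xra{\phi} Y\}] = [Y] - [X]$ in $K_0$, with the sign determined by which term sits in which degree.

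First I would handle $[i]^2 = 0$: by Lemma~\ref{K0cl}(1) we have $(i,i) \simeq 0$ in $H(\cl)$, and since $[\mf{a}] = [a_1]\cdots[a_n]$ for $\mf{a} = (a_1,\dots,a_n)$, the class $[(i,i)]$ equals $[i][i]$; hence $[i]^2 = [0] = 0$. Next, for $|i-j| > 1$, Lemma~\ref{K0cl}(4) gives $(j,i) \simeq (i,j)[\op{gr}(cr_{i,j})]$, so $[j][i] = (-1)^{\op{gr}(cr_{i,j})}[i][j]$; one then checks from the definition of $\op{gr}(cr_{i,j})$ that $\op{gr}(cr_{i,j})$ is odd whenever $|i-j| > 1$ (indeed $(-1)^{i-j}$ and $(-1)^{i-j+1}$ are both $-1$ precisely when $i-j$ is odd, which always holds here since $|i-j|>1$ forces $i - j$ to have absolute value at least $2$ — wait, this needs care: $|i-j|>1$ does not force $i-j$ odd). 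Here I would instead argue that $\op{gr}(cr_{i,j})$ takes the value $-1$ exactly in the cases recorded, and trace through the two sign cases $i<j$ and $i \geq j$ to confirm that the shift is by an odd amount whenever $|i-j| > 1$; the upshot is the anticommutation $[i][j] = -[j][i]$.

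Finally, for the Clifford relation, I would take $i = e$ even and use Lemma~\ref{K0cl}(2): $(e+1,e) \simeq \{(\es) \xra{cup_{e,e+1}} (e,e+1)\}$ with $(\es)$ in cohomological grading $0$ and $(e,e+1)$ in grading $1$, so this twisted complex is the cone of $cup_{e,e+1}: (\es) \to (e,e+1)$, giving in $K_0$ the relation $[e+1][e] = [(e,e+1)] - [(\es)] = [e][e+1] - 1$, i.e. $[e][e+1] - [e+1][e] = 1$. That is off by a sign from the target relation $[i][i+1] + [i+1][i] = 1$, so I would need to pin down the Grothendieck-group sign convention carefully: the cone of a degree-$0$ map $X \to Y$ has class $[Y] - [X]$ only with a particular placement, and depending on the convention for one-sided twisted complexes used in~\cite{BK} the class of $\{(\es) \xra{} (e,e+1)\}$ may be $[(\es)] - [(e,e+1)]$ or $[(e,e+1)] - [(\es)]$; likewise part (3) gives the analogous relation for $e$ odd via $(e-1,e) \simeq \{(e,e-1) \xra{cap} (\es)\}$. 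The main obstacle is exactly this bookkeeping of cohomological shifts and the triangle $[i+1]\cdot[i] \ra [\es] \ra [i]\cdot[i+1] \ra$ alluded to in the introduction: once the convention is fixed so that the distinguished triangle reads $(e+1,e) \to (\es) \to (e,e+1) \to (e+1,e)[1]$, one gets $[(\es)] = [e+1][e] + [e][e+1]$, which is the desired relation. I would therefore state the computation as: from Lemma~\ref{K0cl}(2,3) the object $(i)(i+1) \oplus (i+1)(i)$ is isomorphic in $H(\cl)$ to an iterated extension of two copies of $(\es)$ arranged so that the classes add to $[(\es)] = 1$ (the signs from the shift in (2) and in (3) being arranged to cancel against the triangle orientation), and hence $[i][i+1] + [i+1][i] = 1$ in $K_0(\cl)$.
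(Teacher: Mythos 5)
Your overall strategy — read off the three relations from the isomorphisms in Lemma~\ref{K0cl} via $[A[m]] = (-1)^m[A]$ and the Euler-characteristic formula for a twisted complex — is exactly the route the paper intends, and the $[i]^2=0$ case is handled correctly. But the remaining two cases contain errors that need fixing, not just bookkeeping to be ``pinned down.''

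For the anticommutation, the confusion is about what $\op{gr}(cr_{i,j})$ is: it is not the exponent $i-j$ but the \emph{value} $(-1)^{i-j}$ (if $i<j$) or $(-1)^{i-j+1}$ (if $i\geq j$), which is always $\pm 1$. Thus the shift in Lemma~\ref{K0cl}(4) is always by an odd integer, regardless of the parity of $i-j$, and $(-1)^{\op{gr}(cr_{i,j})}=-1$ in every case with $|i-j|>1$. Your parenthetical ``$(-1)^{i-j}$ and $(-1)^{i-j+1}$ are both $-1$ precisely when $i-j$ is odd'' is false (those two can never both equal $-1$), and the subsequent ``trace through the two sign cases'' is unnecessary: there is nothing case-dependent about the shift being odd.

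For the Clifford relation, your first computation $[e+1][e]=[(e,e+1)]-[\es]$ has the sign backwards, and your patched-up final version (an ``iterated extension of two copies of $(\es)$'') is not what Lemma~\ref{K0cl}(2),(3) say. The correct reading of Lemma~\ref{K0cl}(2): since $cup_{e,e+1}$ is a degree-$0$ diagram but must be a degree-$1$ twisted differential, the second term $(e,e+1)$ sits in cohomological degree $+1$ (i.e.\ as $(e,e+1)[-1]$), while $(\es)$ is in degree $0$ as stated. Hence $[(e+1,e)] = [\es] + (-1)^{-1}[(e,e+1)] = 1 - [e][e+1]$, which is precisely $[e][e+1]+[e+1][e]=1$ for $i=e$ even. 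For $i$ odd, the same bookkeeping applied to Lemma~\ref{K0cl}(3), $(e-1,e)\simeq\{(e,e-1)\xra{cap}(\es)\}$ with $(\es)$ again in degree $0$ and therefore $(e,e-1)$ in degree $-1$, gives $[(e-1,e)] = 1 - [(e,e-1)]$, the same relation with $i=e-1$. These two parities cover all $i\in\Z$; there is no need to combine (2) and (3) for a single $i$.
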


\begin{proof}[Proof of Proposition \ref{K0clsur}]
Corollary \ref{relation} gives a ring homomorphism:
$$\begin{array}{cccc}
\gamma: & \Cl & \ra & K_0(\cl) \\
& a_i & \mapsto & [i],
\end{array}$$
which is surjective.
\end{proof}
We will show that $\gamma$ is an isomorphism in Section 6.

\subsection{Connection to contact topology}
The diagrammatic category $\cl$ can be viewed as a DG realization of the contact category $\Ccl$ which was described in Section 1.2.

Elementary objects $(\es)$ and $(i)$'s of $\cl$ correspond to the distinguished dividing sets $[\es]$ and $[i]$'s of $\Ccl$ for $i \in \Z$ as in Figure \ref{in1}.
The correspondence on elementary objects in general is built up on the monoidal structures on $\cl$ and $\Ccl$.

Each elementary diagram which is $d$-closed (i.e. all five diagrams in the first two columns of Figure \ref{6}) corresponds to a tight contact structure between the corresponding dividing sets.
Other elementary diagrams which are not $d$-closed give DG realizations of morphisms of $\Ccl$.
For instance,
\be
\item $d(cr_{i,i})=id_{(i,i)}$ implies that $(i,i)$ is isomorphic to the zero object of $H(\cl)$ as in Lemma \ref{K0cl}(1); the corresponding dividing set $[i][i]$ is defined as the zero object of $\Ccl$.
\item The isomorphisms which contain crossings $cr_{i,i+1}$ and dotted diagrams in Lemma \ref{K0cl}(2),(3) are DG realizations of the distinguished triangles in $\Ccl$ as in Figure \ref{in2}.
\ee

Some of relations in $\cl$, for instance (R1-b), (R4-d) and (R-5), represent isotopies of tight contact structures in $\Ccl$.

Although we do not know how to compute $\F$-bases for any $\Hom_{\cl}(\mf{a},\mf{b})$, we may get some hints about the cohomology of $\Hom_{\cl}(\mf{a},\mf{b})$ from the contact topology.
By a theorem of Eliashberg \cite{E} on the uniqueness of tight contact structure on the $3$-dimensional ball, $\op{dim}_{\F}\Hom_{\Ccl}(\g_1, \g_2)\leq 1$ for any dividing sets $\g_1$ and $\g_2$.
A dividing set $\g_2$ is called {\em stackable} over $\g_1$ if $\Hom_{\Ccl}(\g_1, \g_2)$ is nonzero.
Let $\overline{\mf{a}}$ denote the dividing set in $\Ccl$ corresponding to $\mf{a} \in \E(\cl)$.
We conclude this section with the following conjecture:

\begin{conj} \label{conjcl}
For $\mf{a}, \mf{b} \in \E(\cl)$,
$$ \op{dim}_{\F}H^*(\Hom_{\cl}(\mf{a},\mf{b}))= \left\{
\begin{array}{cl}
1 & \mbox{if}~~~ \overline{\mf{b}} ~~\mbox{is stackable over}~~ \overline{\mf{a}}; \\
0 & \mbox{otherwise}.
\end{array}\right.
$$
\end{conj}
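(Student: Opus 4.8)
The plan is to reduce the computation of $H^{*}(\Hom_{\cl}(\mf{a},\mf{b}))$ to the categorical action of Section~6, where the target category $DGP(R)$ has tractable morphism complexes. First I would use the adjunction isomorphisms recorded in the Remark of Section~2.1, namely $\Hom((i)\ot\mf{a},\mf{b})\cong\Hom(\mf{a},(i-1)\ot\mf{b})$, to slide every factor of $\mf{a}$ onto the right, reducing to the case $\Hom_{\cl}((\es),\mf{c})$ for a single elementary object $\mf{c}$ built from $\mf{a}$ and $\mf{b}$ by reindexing and cohomological shifts. Next, using the isomorphisms of Lemma~\ref{K0cl} inside the triangulated category $H(\cl)$ --- killing pairs $(i,i)$, replacing each adjacent pair $(e+1,e)$ or $(e-1,e)$ by its two-term twisted complex, and re-ordering far-apart labels --- I would rewrite $\mf{c}$ up to quasi-isomorphism as a one-sided twisted complex all of whose terms are \emph{reduced} sequences. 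This is the categorical shadow of the fact, proved in Section~6, that $\gamma:\Cl\to K_{0}(\cl)$ is an isomorphism and that $\Cl$ has a $\Z$-basis of reduced monomials.

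The heart of the matter is the upper bound $\dim_{\F}H^{*}(\Hom_{\cl}(\mf{a},\mf{b}))\leq 1$. I would attack it through the action $\cal{F}:\cl\times DGP(R)\ra DGP(R)$ of Section~6. The aim is to upgrade the $K_{0}$-level faithfulness proved there to a chain-level statement: that $\cal{F}$ induces, for each $\mf{a},\mf{b}$, an injection on cohomology from $\Hom_{\cl}(\mf{a},\mf{b})$ into the complex of natural transformations between the corresponding $R$-bimodules. Since the graphical calculus for $\cv$ is tractable and Section~4 produces explicit $\F$-bases of its morphism complexes, the latter complex can be computed directly: evaluating on the distinguished objects $\g_{|k\ran}$ that lift the vacua $|k\ran$, and combining the explicit $\cv$-bases with the known action on $V=K_{0}(DGP(R))$, one should read off that the cohomology of the transformation complex is at most one-dimensional and is nonzero exactly when $\overline{\mf{b}}$ is obtained from $\overline{\mf{a}}$ by a sequence of bypass attachments, i.e.\ is stackable over it. An alternative route is to construct a DG functor from $\cl$ to a DG enhancement of the contact category $\Ccl$ and invoke Eliashberg's theorem~\cite{E}, which already gives $\dim_{\F}\Hom_{\Ccl}(\g_{1},\g_{2})\leq 1$; but that route needs the functor to be homologically fully faithful, which is essentially the conjecture itself.

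For the lower bound, when $\overline{\mf{b}}$ is stackable over $\overline{\mf{a}}$ I would write down an explicit $d$-closed diagram in $\Hom_{\cl}(\mf{a},\mf{b})$ --- a composite of caps, cups, dotted strands and crossings that realizes the corresponding bypass-stacked contact structure --- and show it is not a coboundary by testing its image under $\cal{F}$ against a suitable object of $DGP(R)$: the image is a nonzero morphism in $H(DGP(R))$ because it induces a nonzero operator on $V$, using the faithfulness of the Fock space representation established in Section~3. When no stacking contact structure exists, the normal form from the first step should leave every term vanishing after the reductions, so $H^{*}=0$.

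The main obstacle is precisely the gap flagged in the Remark at the end of Section~2.1: we currently have no normal form, nor even a spanning set, for $\Hom_{\cl}(\mf{a},\mf{b})$, because the dot-slide relation (R6) applies only when the passing label avoids certain values, so a family of linked loops in $\Hom((\es),(\es))$ cannot obviously be unlinked. Any purely diagrammatic proof of the upper bound must resolve this. The action-based route sidesteps it only if one can establish the chain-level faithfulness of $\cal{F}$, i.e.\ control $\Hom_{DGP(R)}$ between all of the relevant bimodule images well enough; this is plausible given the explicit $\cv$-calculus but is a substantial computation. My expectation is that the first concrete consequence one can extract is the special case $\Hom_{\cl}((\es),(\es))\simeq\F$, and that the full conjecture genuinely requires the homological faithfulness of the Fock space action as an input.
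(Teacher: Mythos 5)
This statement is labelled a \emph{Conjecture} in the paper, and the paper gives no proof of it; indeed the Remark at the end of Section~2.1 explicitly records that no $\F$-basis for $\Hom_{\cl}(\mf{a},\mf{b})$ is known and that the author does not even know whether $\Hom_{\cl}((\es),(\es))$ is one-dimensional. So there is no proof in the paper against which to compare your proposal.

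As for the proposal itself: you have correctly identified the two genuine obstacles, and you are candid that neither is resolved. First, the reduction to canonical forms that works so smoothly in $\cv$ (Section 4.3) has no analogue in $\cl$ because (R6) only lets a dot pass a crossing under parity restrictions on the third label, so linked circles in $\Hom_{\cl}((\es),(\es))$ cannot be unlinked; without a spanning set you cannot bound the cohomology from above by purely diagrammatic means. Second, your action-based route would require the functor $\tau:\cl\ra DG(R\ot R^{op})$ to be \emph{homologically} faithful --- i.e.\ to induce injections on the cohomology of morphism complexes --- which is a much stronger statement than what the paper actually proves, namely injectivity of $\gamma:\Cl\ra K_0(\cl)$ via the $K_0$-level faithfulness of $V$ (Lemma~\ref{faithful}). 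That $K_0$-statement says nothing about whether a $d$-closed diagram that is nonzero in $H^*(\Hom_{\cl}(\mf{a},\mf{b}))$ must act nontrivially under $\tau$, because a morphism that is nonzero in cohomology could still be sent to a null-homotopic bimodule map. You acknowledge this circularity in your own last paragraph, and you are right: establishing the homological faithfulness of $\tau$ is essentially equivalent to the conjecture, not a shortcut to it. The lower-bound half of your plan (exhibit a $d$-closed diagram realizing the bypass sequence and check by direct $\cv$-computation that its image under $\tau$, evaluated on a well-chosen $P(\mf{x})$, survives in $H(DGP(R))$) is the most promising concrete piece and would give nonvanishing in the stackable case, but the upper bound remains entirely open by this route. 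In short: your sketch is a sensible research plan, but it is not a proof, and the paper itself treats the statement as an open problem.
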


\section{The linear representation $V$ of $Cl_{\Z}$}
In this section we define the Fock space representation $V$ of $Cl_{\Z}$ and show that it is faithful.
The basis of $V$ will be used to construct another diagrammatic category $\cv$ in Section 4.

The representation $V$ of $Cl_{\Z}$ is the pullback of the Fock space representation $F$ of $Cl$ under the homomorphism $\theta$:
$$\begin{array}{cccc}
\theta: & \Cl \ot \mathbb{C} & \ra & Cl \\
 & a_{2i} & \mapsto & \psi_i^* \\
 & a_{2i-1} & \mapsto & \psi_i+\psi_{i-1},
\end{array}$$
In suitable completions of the algebras, $\theta$ has an inverse map which involves infinite sums:
$$\begin{array}{ccc}
Cl & \ra & \Cl \ot \mathbb{C} \\
 \psi_i^* & \mapsto & a_{2i} \\
 \psi_i & \mapsto & a_{2i-1}-a_{2i-3}+\cdots.
\end{array}$$

We give a $\Z$-basis of $V$ which is equivalent to an integral version of the $\mathbb{C}$-basis of $F$ as follows.
Let $V$ be a free abelian group with a basis of semi-infinite increasing monomials of even integers:
$$\hb = \{x_1 \we x_2 \we \cdots ~|~ x_1 < x_2 < \cdots ~\mbox{are even integers}, x_n=x_{n-1}+2 ~\mbox{for}~ n\gg0\}.$$
There is a decomposition $\hb = \sqcup_{k} \hb_{k}$ for $k \in \Z$, where
$$\hb_k = \{x_1 \we x_2 \we \cdots \in \cb ~|~ x_n=2n+2k ~\mbox{for}~ n\gg0\}.$$
Let $\overline{|k\ran}=(2k+2) \we (2k+4) \we (2k+6) \cdots \in \hb_{k}$ be the {\em vacuum state} in $\hb_k$.
For $j\in \Z$, let $\eta_j$ and $\eta_j^*$ be the {\em wedging} and {\em contracting} operators on $V$:
\begin{align*}
\eta_j(x_1 \we x_2 \we \cdots)=& \left\{
\begin{array}{ll}
0 & \mbox{if} \hspace{0.3cm} 2j=x_i ~\mbox{for some}~ i; \\
(-1)^{i}x_1 \we \cdots \we x_i \we (2j) \we x_{i+1} \we \cdots & \mbox{if} \hspace{0.3cm} x_i < 2j < x_{i+1}.
\end{array}\right. \\
\eta_j^*(x_1 \we x_2 \we \cdots)=& \left\{
\begin{array}{ll}
0 & \quad \mbox{if} \hspace{0.3cm} 2j\neq x_i ~\mbox{for all}~ i; \\
(-1)^{i-1}x_1 \we \cdots \we x_{i-1} \we x_{i+1} \we \cdots & \quad \mbox{if} \hspace{0.3cm} 2j= x_i.
\end{array}\right. \\
\end{align*}

Now we define the action $G: \Cl \times V \ra V$ by:
$$a_{2j}:=\eta_j, \quad\quad a_{2j-1}:=\eta_j^*+\eta_{j-1}^*.$$
It is easy to verify that this is well-defined. For instance,
$$a_{2j}a_{2j-1}+a_{2j-1}a_{2j}=\eta_j(\eta_j^*+\eta_{j-1}^*)+(\eta_j^*+\eta_{j-1}^*)\eta_j=1.$$

\begin{rmk}
The correspondence between the representation $V$ and the Fock space $F$ is simply given by $V \ni \overline{|k\ran} \longleftrightarrow |k\ran \in F$, where $\overline{|k\ran}$ is the complement of $|k\ran$ multiplied by $2$.
\end{rmk}

Since any element in $\hb$ can be obtained from some vacuum state $\overline{|k\ran}$ via wedging operators, we have the following easy description of $V$ which will be used in Section 6.
\begin{lemma} \label{linear}
The representation $V$ of $Cl_{\Z}$ is uniquely determined by:
\begin{align*}
a_{2j}\cdot v =\eta_j(v) \quad\quad & \mbox{for}~~ v \in \hb, \\
a_{2j-1}\cdot \overline{|k\ran}=0 \quad\quad & \mbox{for}~~ k \geq j.
\end{align*}
\end{lemma}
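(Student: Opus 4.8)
The plan is to establish the two assertions implicit in the statement: \emph{consistency} (the action $G$ constructed above really does satisfy the two displayed identities) and \emph{rigidity} (any $\Cl$-module structure on the free abelian group on $\hb$ satisfying those two identities must coincide with $G$). Consistency is a direct computation: $a_{2j}=\eta_j$ acts as $\eta_j$ by definition, and for $k\ge j$ the monomial $\overline{|k\ran}$ has smallest factor $2k+2>2j>2j-2$, so neither $\eta_j^*$ nor $\eta_{j-1}^*$ finds a factor to delete, whence $a_{2j-1}\cdot\overline{|k\ran}=(\eta_j^*+\eta_{j-1}^*)(\overline{|k\ran})=0$.

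For rigidity, fix a $\Cl$-action ``$\cdot$'' on $V$ satisfying the two identities. Since $\hb$ is a $\Z$-basis and the generators act $\Z$-linearly, it suffices to pin down $a_i\cdot v$ for every $v\in\hb$ and every $i$; the even generators $a_{2j}$ are already determined by the first identity, so the content is to determine $a_{2j-1}\cdot v$. First I would record the combinatorial input, a mild sharpening of the remark preceding the lemma: for every $v\in\hb$ and every $j\in\Z$ there is a charge $k\ge j$ and even integers $2j_1,\dots,2j_m$ with $v=a_{2j_1}\cdots a_{2j_m}\cdot\overline{|k\ran}$. Indeed, choosing a stabilization index $N'$ of $v$ large enough that $k:=N'+(\mathrm{charge}\ v)-1\ge j$, the vacuum $\overline{|k\ran}$ sits inside $v$ as a cofinite set of factors, and the finitely many remaining factors of $v$ (all smaller than the least factor of $\overline{|k\ran}$) can be inserted one at a time by wedging operators, i.e.\ by even generators.

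Now I would induct on the word length $m$ to show that $a_{2j-1}\cdot v$ is forced. If $m=0$ then $v=\overline{|k\ran}$ with $k\ge j$, so $a_{2j-1}\cdot\overline{|k\ran}=0$ by the second identity. If $m\ge 1$ write $v=a_{2j_1}\cdot v''$ with $v''=a_{2j_2}\cdots a_{2j_m}\cdot\overline{|k\ran}$, which again has a length-$(m-1)$ expression over a vacuum of charge $\ge j$. Using the relations of $\Cl$ to move $a_{2j-1}$ past $a_{2j_1}$ gives
\begin{equation*}
a_{2j-1}\cdot(a_{2j_1}\cdot v'')=
\begin{cases}
-\,a_{2j_1}\cdot(a_{2j-1}\cdot v''), & j_1\neq j,\ j-1,\\[1mm]
v''-a_{2j}\cdot(a_{2j-1}\cdot v''), & j_1=j,\\[1mm]
v''-a_{2j-2}\cdot(a_{2j-1}\cdot v''), & j_1=j-1,
\end{cases}
\end{equation*}
where the first line uses $a_ia_j=-a_ja_i$ for $|i-j|>1$ (here $|2j-1-2j_1|\ge 3$) and the last two use $a_ia_{i+1}+a_{i+1}a_i=1$ with $i=2j-1$, resp.\ $i=2j-2$. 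By the inductive hypothesis $a_{2j-1}\cdot v''$ is determined, and every remaining operator occurring ($a_{2j_1},a_{2j},a_{2j-2}$, all even) acts by its prescribed wedging operator; hence $a_{2j-1}\cdot v$ is determined. This closes the induction, so the action is unique, and since $G$ satisfies both identities it must be $G$.

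I do not expect a serious obstacle; the only points needing a little care are the combinatorial lemma in the second paragraph --- that one may always reach $v$ by wedging from a vacuum of charge \emph{at least} $j$, not merely from some vacuum --- and the bookkeeping in the inductive step ensuring that the boundary term eventually produced is of the form $a_{2j-1}\cdot\overline{|k\ran}$ with $k\ge j$, so that the second identity applies.
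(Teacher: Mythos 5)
Your proposal is correct, and it follows exactly the route the paper sketches: the paper prefaces the lemma with the single remark that every element of $\hb$ is reached from a vacuum state by wedging operators and then calls the description ``easy,'' and your argument is a careful elaboration of that hint --- choose a vacuum $\overline{|k\ran}$ with $k\ge j$, write $v$ as an even word applied to it, and peel generators off $a_{2j-1}\cdot v$ by induction on word length using the Clifford relations. The only cosmetic wrinkle is that in the inductive step $v''=a_{2j_2}\cdots a_{2j_m}\cdot\overline{|k\ran}$ need not lie in $\hb$ (it is zero or $\pm$ a basis vector), so it is slightly cleaner to run the induction on even words $u$ --- proving $a_{2j-1}\cdot(u\cdot\overline{|k\ran})$ is determined --- rather than on elements $v\in\hb$, but this does not affect the substance.
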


Moreover, in order to show that $\gamma: \Cl \ra K_0(\cl)$ is injective we need the following lemma.
\begin{lemma} \label{faithful}
The representation $V$ of $Cl_{\Z}$ is faithful.
\end{lemma}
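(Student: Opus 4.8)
The plan is to show that the annihilator of $V$ in $\Cl$ is trivial. Since $\Cl$ is spanned over $\Z$ by ordered monomials $a_{i_1}\cdots a_{i_r}$ in the generators (using the relations to sort and to kill squares), a general element $x \in \Cl$ is a finite $\Z$-linear combination of such monomials. I would first reduce the problem: via the isomorphism $\theta$ (after tensoring with $\Q$ or passing to the stated completion), it suffices to prove that the Fock space $F$ is a faithful representation of $Cl$, a classical fact; but since the paper works integrally I prefer a direct argument on $V$ itself. The key point is that $V = \bigoplus_k V_k$ decomposes by charge, where $V_k$ is the span of $\hb_k$, and both $\eta_j$ and $\eta_j^*$ shift charge by $\pm 1$; any monomial in the $a_i$'s therefore has a well-defined net charge shift, so it suffices to test faithfulness separately on elements that are charge-homogeneous.

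The core of the argument is a highest-weight / leading-term analysis. Pick a nonzero charge-homogeneous $x$ and write it in the ordered monomial basis. Order the generators $a_{2j}$ (the $\eta_j$'s, which add a box) and note that applying $a_{2j}$ for very large $j$ to the vacuum $\overline{|k\ran}$ simply appends, while $a_{2j-1}$ (a sum of two contractions) annihilates $\overline{|k\ran}$ when $k \ge j$ by Lemma~\ref{linear}. So I would apply $x$ to a vacuum state $\overline{|k\ran}$ with $k$ chosen very negative relative to all indices appearing in $x$: then every annihilation generator $a_{2j-1}$ occurring in $x$ acts as zero on $\overline{|k\ran}$ and on everything subsequently built from it by wedging with even boxes $2j > 2k$, so only the monomials in $x$ built purely from the wedging generators $a_{2j}=\eta_j$ survive. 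These surviving monomials act on $\overline{|k\ran}$ by wedging in a prescribed set of even integers with a sign; distinct squarefree products $a_{2j_1}\cdots a_{2j_s}$ (with the $j$'s distinct) send $\overline{|k\ran}$ to $\pm$ distinct basis elements of $\hb_k$, which are $\Z$-linearly independent. Hence if $x \cdot \overline{|k\ran} = 0$ for all such $k$, the "pure wedging" part of $x$ vanishes.

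To finish I would remove the pure-wedging part and induct on the number of annihilation generators: given that $x$ has no pure-wedging term, I evaluate $x$ not on a vacuum but on a basis element $v \in \hb_k$ that already contains exactly the boxes that the minimal annihilation generators in $x$ want to delete, again choosing $k$ extreme so that spurious cancellations are impossible, and peel off the leading annihilation. More cleanly, one can run the standard argument: order monomials by (number of $\eta^*$ factors, then lexicographically), take the leading monomial $m$ of $x$, and choose a basis vector $v$ on which $m$ acts nontrivially while all monomials of strictly smaller order in the chosen ordering either annihilate $v$ or land on basis vectors distinct from $m(v)$; linear independence of the images then forces the coefficient of $m$ to be zero, a contradiction. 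The main obstacle is bookkeeping the signs $(-1)^i$ in the definitions of $\eta_j,\eta_j^*$ and verifying that, for the specially chosen test vector $v$, no two monomials of $x$ produce the same basis element with cancelling coefficients — this is where choosing $k$ (and the box content of $v$) sufficiently generic relative to the finitely many indices in $x$ does the work, but it must be set up carefully so the induction closes.
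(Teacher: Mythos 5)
Your strategy --- a direct leading-term / normal-ordering analysis on the Fock module --- is genuinely different from the paper's, which first reduces to a finite Clifford algebra $\Cl(n)$ acting on $V_n=\we^*W_n$ and then proves $\Cl(n)\cong\op{End}_{\Z}(V_n)$ by induction, so faithfulness is automatic. Both routes are viable in principle, but as written yours contains two concrete errors.

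First, the direction of the index bound is reversed. For $a_{2j-1}=\eta_j^*+\eta_{j-1}^*$ to kill $\overline{|k\ran}=(2k+2)\we(2k+4)\we\cdots$ you need $j\le k$, i.e.\ $k$ \emph{large and positive} relative to all indices appearing in $x$ (this is exactly Lemma~\ref{linear}); with $k$ very negative, $2j\in\{2k+2,2k+4,\dots\}$ and $\eta_j^*$ acts nontrivially on the vacuum. Similarly, $a_{2j}=\eta_j$ applied to $\overline{|k\ran}$ for $j\gg 0$ does not ``append'' --- it vanishes, since $2j$ is already in the vacuum. Second, and more seriously, even after choosing $k\gg 0$ it is false that only the pure-wedging monomials of $x$ survive on $\overline{|k\ran}$. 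In the sorted basis $a_{i_1}\cdots a_{i_r}$ with $i_1<\cdots<i_r$, a monomial such as $a_{2j-1}a_{2j}$ (with $j\le k$) acts nontrivially on $\overline{|k\ran}$: $\eta_j$ wedges in $2j$ and then $\eta_j^*\subset a_{2j-1}$ contracts that very box, returning $\pm\overline{|k\ran}$. So the vanishing of $x\cdot\overline{|k\ran}$ does not isolate the pure-wedging part, and the stated base case of your induction fails. The usual fix is to rewrite $x$ in normal-ordered $\eta,\eta^*$ monomials (all $\eta^*$'s to the right) before evaluating; but that change of basis is precisely the inverse of $\theta$ and involves infinite sums, so one must either work in the completion or argue that only finitely many terms contribute --- a point your ``peel off the leading annihilation'' sketch acknowledges but does not carry out. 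The paper avoids all of this by working integrally with $\Cl(n)\cong\op{End}_\Z(V_n)$, which makes the no-cancellation issue disappear.
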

\begin{proof}
We first reduce the infinite-dimensional case to a finite-dimensional one.
Suppose that there exists $a \in \Cl$ such that $a\cdot v =0$ for all $v \in V$.
Let $\Cl(n)$ be a subalgebra of $Cl_{\Z}$ generated by $\{a_i ~|~ 1 \leq i \leq 2n\}$.
We may assume $a \in \Cl(n)$ for some $n$ without loss of generality since $a$ is a finite sum and $Cl_{\Z}$ has a $\Z$-translation.
Consider a $\Cl(n)$-module
$$V(n)=\{a\cdot \overline{|n\ran} \in V ~|~ a \in \Cl(n)\},$$
where $a_{2i-1}(\overline{|n\ran})=0$ for $1 \leq i \leq n$.
It suffices to show that $V(n)$ is faithful as a $\Cl(n)$-module.

Let $Cl_n$ be a $\Z$-algebra generated by $\eta_i, \eta_i^*$ for $1 \leq i \leq n$ with relations:
$$\{\eta_i, \eta_j^*\}=\delta_{ij}, \quad \{\eta_i, \eta_j\}=0, \quad \{\eta_i^*, \eta_j^*\}=0.$$
The $\Z$-algebras $\Cl(n)$ and $Cl_n$ are isomorphic via the following map:
$$\begin{array}{ccl}
\Cl(n) & \ra & Cl_n \\
 a_{2i} & \mapsto & \eta_i \\
 a_{2i-1} & \mapsto & \eta_i^*+\eta_{i-1}^* \quad \mbox{for}~~ i>1 \\
 a_{1} & \mapsto & \eta_1^*.
\end{array}$$
Let $V_n=\we ^* W_n$ be the exterior algebra of $W_n$, where $W_n$ is a free abelian group with a basis $\{e_i ~|~ 1 \leq i \leq n\}$.
Then $V_n$ is a representation of $Cl_n$, where $\eta_i$ and $\eta_i^*$ are wedging and contracting with $e_i$.
Moreover, $V_n$ is isomorphic to $V(n)$ under the isomorphism between $Cl_n$ and $\Cl(n)$.
It suffices to show that $V_n$ is faithful as a $Cl_n$-module.

We show that $Cl_n \cong \op{End}_{\Z}(V_n)$ as $\Z$-algebras by induction on $n > 0$.
It is true for $n=1$.
Suppose that $Cl_{n-1} \cong \op{End}_{\Z}(V_{n-1})$.
Consider a direct sum of abelian groups $V_n=V_{n-1} \oplus V_{n-1}'$, where $V_{n-1}$ is identified with a subspace of $V_n$ consisting of exterior products of $\{e_i ~|~ 1 \leq i \leq n-1\}$, and $V_{n-1}'=\{e_n \we v ~|~ v \in V_{n-1}\}$.
Then we have isomorphisms of $\Z$-algebras:
\[Cl_n \cong \left(
\begin{array}{cc}
Cl_{n-1}\cdot \eta_n^*\eta_n & Cl_{n-1}\cdot \eta_n^*\\
Cl_{n-1}\cdot \eta_n & Cl_{n-1}\cdot \eta_n\eta_n^*
\end{array}
\right)
\cong \left(
\begin{array}{cc}
\op{End}(V_{n-1}) & \Hom(V_{n-1}',V_{n-1})\\
\Hom(V_{n-1},V_{n-1}') & \op{End}(V_{n-1}')
\end{array}
\right).\]
Hence, $Cl_n \cong \op{End}_{\Z}(V_n)$ and $V_n$ is faithful.
\end{proof}

\section{The diagrammatic DG category $\cal{V}$}

To construct a categorical action of $\cl$ which lifts the linear action of $Cl_{\Z}$ on $V$, we need categorify $V$ first.
Recall from Section 3 that a basis of $V$ is given by semi-infinite monomials of even integers.
We mimic the construction of $\cl$ to define another diagrammatic category $\cv$, whose elementary objects are semi-infinite sequences of even integers.

In Section 4.1, we define the elementary objects and morphism sets as $\F$-vector spaces.
In Section 4.2, we define a differential on morphisms which is motivated from the differential in $\cl$.
In Section 4.3, we use define {\em canonical forms} of nonzero morphisms and give $\F$-bases of morphism sets generated by those canonical forms.
In Section 4.4, we define the DG category $\cv$ as one-sided twisted complexes of elementary objects.

\subsection{The elementary objects and morphisms}
In this subsection, we define the elementary objects and morphism sets as $\F$-vector spaces.

\vspace{.2cm}
\n $\bullet$ {\bf Elementary objects:} the set of elementary objects $\E(\cv)$ of $\cv$ consists of semi-infinite sequences of even integers:
$$\{\mf{x}=(x_1, x_2, \dots) ~|~ x_i ~\mbox{even}, ~ x_n=x_{n-1}+2 ~\mbox{for}~ n\gg0 \}.$$

\n $\bullet$ {\bf Morphisms:} the morphism set $\Hom_{\cv}(\mf{x}, \mf{y})$ for $\mf{x},\mf{y} \in \E(\cv)$ is a complex of $\F$-vector spaces generated by a set $D(\mf{x},\mf{y})$ of dotted labeled diagrams from the label $\mf{x}$ to the label $\mf{y}$, modulo local relations.
The differential will be defined in Section 4.2.

\vspace{.2cm}
\n $\bullet$ {\bf $D(\mf{x},\mf{y})$:}
For $\mf{x}=(x_1, x_2, \dots)$ and $\mf{y}=(y_1, y_2, \dots)$ in $\E(\cv)$, any diagram $f \in D(\mf{x},\mf{y})$ is obtained by vertically stacking finitely many {\em generating diagrams} in the strip $\R \times [0,1]$ such that endpoints of $f$ are located at $\{i\} \times \{0\}$ with labels $x_i$ and at $\{j\} \times \{1\}$ with labels $y_j$ for $i,j \in \Z_+$.
Each generating diagram is a horizontal stacking of one {\em elementary diagram} with infinitely many trivial vertical strands.
In particular, for any $f \in D(\mf{x},\mf{y})$ there exists $N(f)\gg0$ such that $f$ consists of trivial vertical strands in the region $\{x \in \R ~|~ x\geq N(f)\} \times [0,1]$.

\vspace{.2cm}
\n $\bullet$ {\bf Elementary diagrams:} the elementary diagrams are given in Figure \ref{v1}, where all labels are even throughout this section:
\be
\item a vertical strand in $\Hom((i),(i))$;
\item a crossing $cr_{i,j} \in \Hom((j,i), (i,j))$;
\item a dotted strand $dot_{i} \in \Hom((i+2), (i))$.
\ee
\begin{figure}[h]
\begin{overpic}
[scale=0.2]{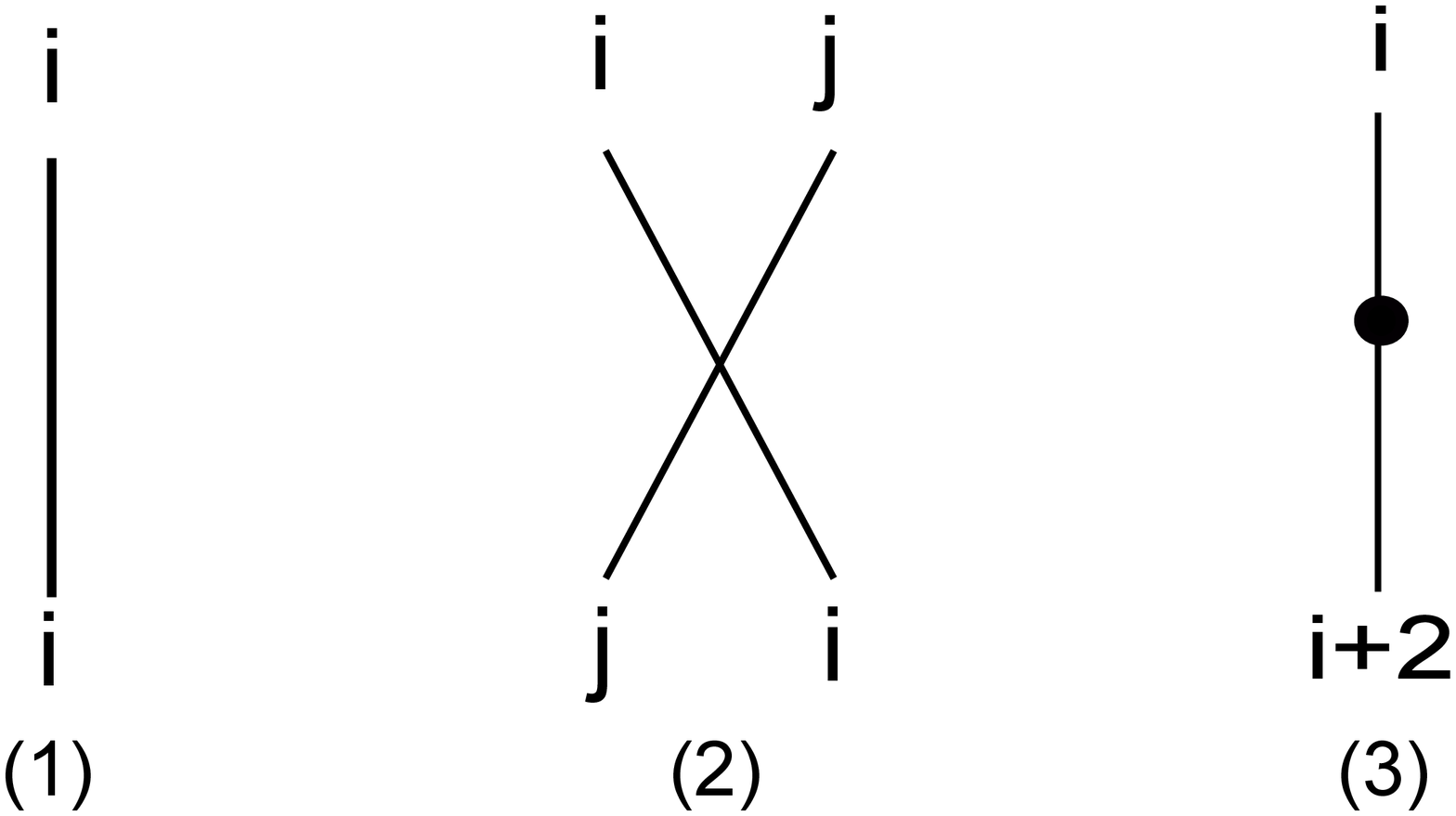}
\end{overpic}
\caption{Elementary diagrams of $\cv$}
\label{v1}
\end{figure}

\vspace{.2cm}
\n $\bullet$ {\bf Generating diagrams:}
let $\mf{a}$ denote a finite sequence of even integers, and $\mf{a} \cdot \mf{x} \in \E(\cv)$ denote a horizontal stacking of $\mf{a}$ and $\mf{x} \in \E(\cv)$.
The generating diagrams are given in Figure \ref{v4}:
\be
\item an identity $id_{\mf{x}} \in \Hom_{\cv}(\mf{x}, \mf{x})$;
\item a crossing $cr_{i,j}(\mf{a}, \mf{x}) \in \Hom_{\cv}(\mf{a}\cdot (j,i)\cdot \mf{x}, ~\mf{a}\cdot (i,j)\cdot \mf{x})$;
\item a dotted strand $dot_{i}(\mf{a}, \mf{x}) \in \Hom_{\cv}(\mf{a}\cdot (i+2)\cdot \mf{x}, ~\mf{a}\cdot (i)\cdot \mf{x})$.
\ee
\begin{figure}[h]
\begin{overpic}
[scale=0.25]{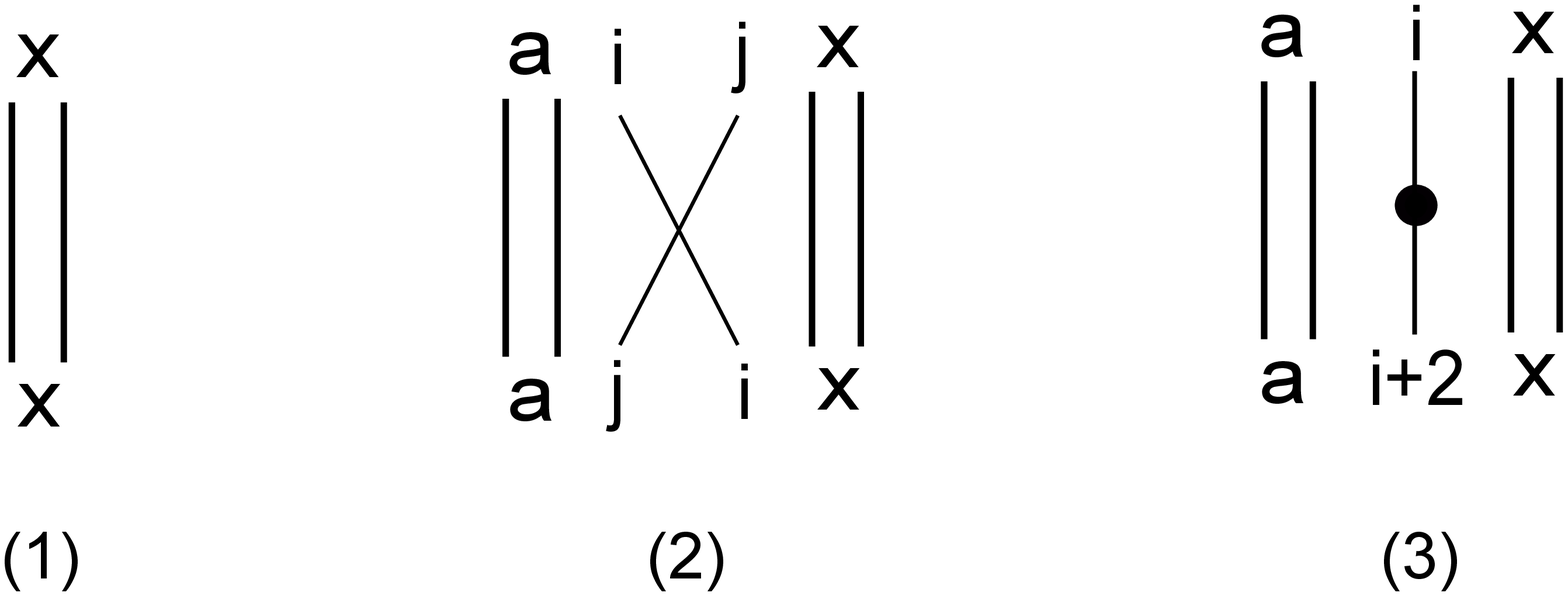}
\end{overpic}
\caption{Generating diagrams}
\label{v4}
\end{figure}

\n $\bullet$ {\bf Local relations:} the relations $\cal{L}$ consist of $5$ groups:

\vspace{.1cm}
\n {\bf (L1)} Isotopy relation:
\begin{description}
\item[(L1-a)] a vertical strand as an idempotent;
\item[(L1-b)] isotopy of disjoint diagrams.
\end{description}

\n {\bf (L2)} Double dot relation: a strand with a double dot is zero.

\vspace{.1cm}
\n {\bf (L3)} Double crossing relation:
\begin{description}
\item[(L3-a)] a double crossing in $\Hom((i,i),(i,i))$ is zero;
\item[(L3-b)] a double crossing in $\Hom((i,j),(i,j))$ is the identity if $i \neq j$.
\end{description}

\n {\bf (L4)} Triple intersection moves for all labels:

\vspace{.1cm}
\n {\bf (L5)} Dot slide relations.
\begin{figure}[h]
\begin{overpic}
[scale=0.2]{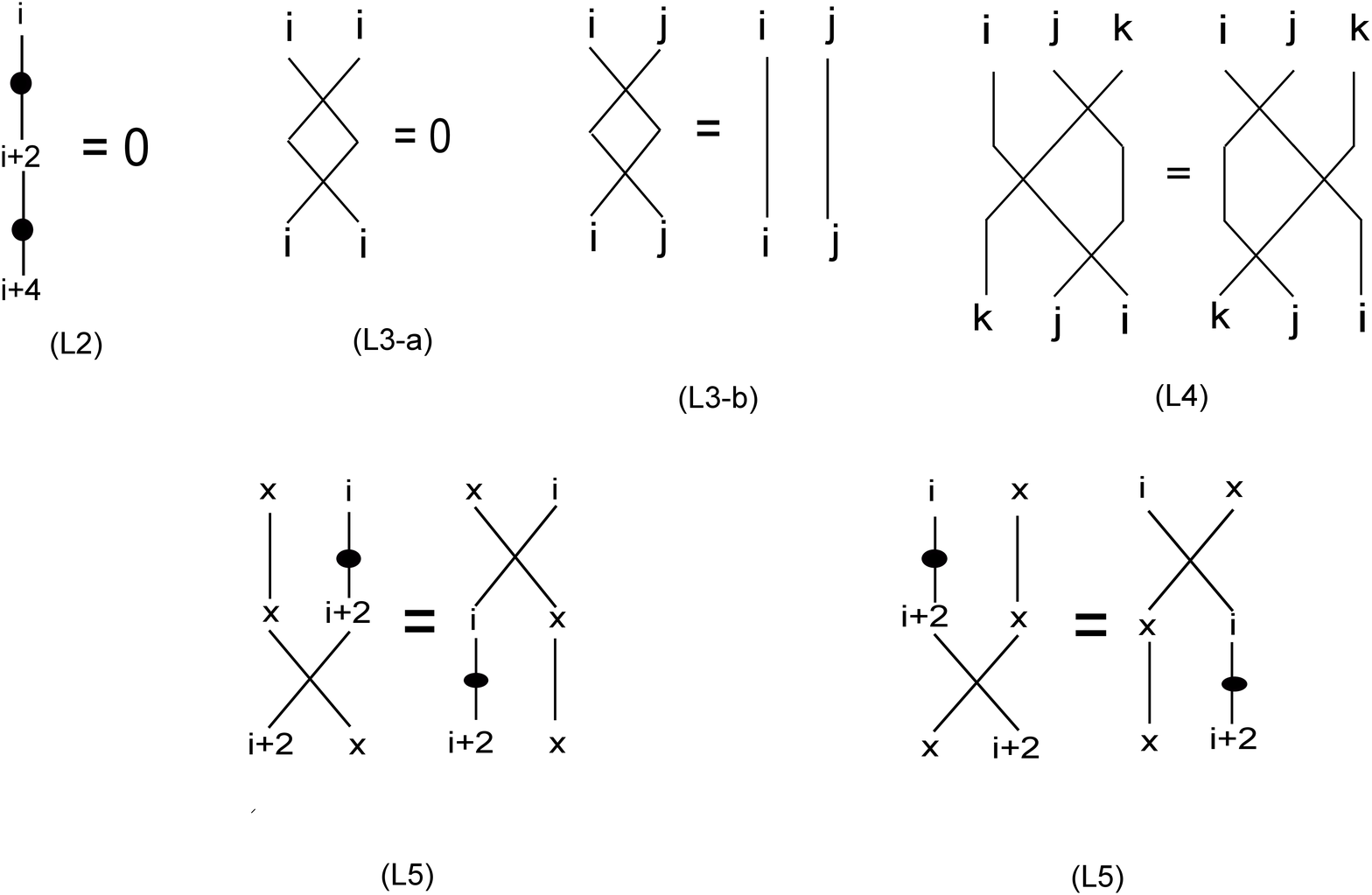}
\put(48,40){$i \neq j$}
\put(25,5){$x \neq i$}
\put(70,5){$x \neq i+2$}
\end{overpic}
\caption{Relations of $\cv$}
\label{v2}
\end{figure}

\vspace{.2cm}
\n $\bullet$ {\bf Grading on morphisms:} the cohomological grading $\op{gr}$ is defined in the same way as in $\cl$.

\begin{rmk}
(1) The elementary diagrams in $\cv$ do not include cups or caps since only even labels are allowed.
Moreover, there is no relation as the isotopy relations (R1-b,c,d) of $\cl$.
Strictly speaking, the morphisms in $\cv$ are rigid diagrams, not isotopy classes of diagrams.

\n(2) The relations of $\cv$ are much simpler than those of $\cl$.
For instance, any double crossing is either identity or zero depending on whether the labels are the same or not.
\end{rmk}

\n $\bullet$ {\bf More about (L5):} there are two cases when we cannot slide a dot to the left of a crossing to right using (L5).
The corresponding two diagrams in which the dots are to the left of the crossings are given in Figure \ref{v6}.
\begin{figure}[h]
\begin{overpic}
[scale=0.22]{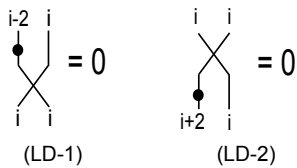}
\end{overpic}
\caption{Relations (LD)}
\label{v6}
\end{figure}

\begin{lemma} \label{LD}
The relations (LD) in Figure \ref{v6} hold in $\cv$.
\end{lemma}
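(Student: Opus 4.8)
The plan is to express each of the two ``dot-on-the-left-of-a-crossing'' diagrams in Figure \ref{v6} in terms of diagrams where the dot has been moved away from the crossing, by inserting a resolution of the identity and using the relations (L2)--(L5) already established. The starting observation is that the obstruction to applying (L5) directly is exactly the numerical coincidence in the labels that (L5) forbids ($x = i$ in the left case, $x = i+2$ in the right case); so the strategy is to create, via a double crossing, a local configuration where a \emph{legal} dot slide becomes available, perform it, and then collapse the auxiliary crossings using (L3).

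Concretely, for the first diagram I would proceed as follows. First I would attach a pair of crossings below (or above) the dotted strand so as to insert a double crossing on the two relevant strands; by (L3-b) this double crossing is the identity whenever the two labels differ, and by (L3-a) it is zero when they agree — and it is precisely the ``agree'' case that produces the nontrivial term on the right-hand side of (LD). After this insertion, the dot sits next to a crossing whose labels now \emph{do} satisfy the hypothesis of (L5), so I can slide it across. Then I would untangle the resulting picture using a triple intersection move (L4) to bring the two auxiliary crossings together, and finally apply (L3-a)/(L3-b) once more to resolve that double crossing, picking up the stated right-hand side. The second diagram in Figure \ref{v6} is handled by the mirror-image argument, using $dot_i \in \Hom((i+2),(i))$ from the other side; the case analysis is symmetric under the reflection that exchanges the two elementary crossings. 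Throughout, (L2) is used to kill any term in which a strand acquires a double dot.

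The main obstacle I anticipate is \emph{bookkeeping the sign/parity and grading constraints} so that the identity holds on the nose in the cochain complex $\Hom_{\cv}(\mf{x},\mf{y})$ rather than merely up to grading shift: each crossing carries a cohomological degree $\op{gr}(cr_{i,j}) = \pm 1$ depending on $i - j$, so when I insert an auxiliary double crossing I must check that the degrees of the two sides of (LD) actually match (cf. Remark 5 about checking the grading for (R6)). A secondary subtlety is ensuring that the triple-intersection move (L4) I invoke is in a configuration where \emph{all} its labels are admissible — unlike (R5) in $\cl$, (L4) in $\cv$ is stated ``for all labels,'' so this should go through without a case split, but I would verify it explicitly for the specific labels appearing after the dot slide. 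Once the grading is confirmed, the computation reduces to a short diagram chase entirely within (L2)--(L5), which I would present as a single figure (as the author does for Lemma \ref{curl} in Figure \ref{8}) rather than in prose.
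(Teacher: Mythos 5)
Your strategy---pad with an identity double crossing via (L3-b), then apply the now-legal (L5) slide, then read off a vanishing equal-label double crossing via (L3-a)---is essentially the diagram chase that the paper records in Figure~\ref{v7}, and the final sentence (present it as a single figure) is the right instinct. So the core idea is correct.

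That said, there is a genuine conceptual misreading. Both relations in (LD) assert that the offending diagram is \emph{zero}: there is no ``nontrivial term on the right-hand side.'' This is forced by the grading: when $x=i$ or $x=i+2$, the two sides of the would-be dot slide have cohomological degrees differing by $2$ (precisely why (L5) excludes those cases), so no identity between them is possible, and the lower-graded diagram must be shown to vanish. It is also how (LD) is used downstream: Lemma~\ref{LRP} deduces from (LD) that dots can always be slid left-to-right \emph{in a nonzero diagram}, and the proof of Proposition~\ref{hom} cites (LD) precisely to conclude that certain diagrams are zero. Your plan to ``check that the degrees of the two sides of (LD) actually match'' therefore starts from a false premise; the (L3-a) ``agree'' case produces the zero that is the whole content of (LD), not a nontrivial term.

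A few smaller points. The morphism spaces of $\cv$ are $\F$-vector spaces and all labels are even, so there is no sign or odd/even parity bookkeeping to do. The triple-intersection move (L4) is not needed: both (LD) pictures live on two strands, and after the legal (L5) slide the two equal-label crossings sit directly next to each other, ready for (L3-a). (Incidentally, (R5) and (L4) are both stated ``for all labels,'' so the contrast you draw between them is not there.) Finally, (L2) plays no role, since the (LD) diagrams carry exactly one dot throughout the computation.
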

\begin{proof}
We prove (LD-1) in Figure \ref{v7}. The proof for (LD-2) is similar.
\begin{figure}[h]
\begin{overpic}
[scale=0.25]{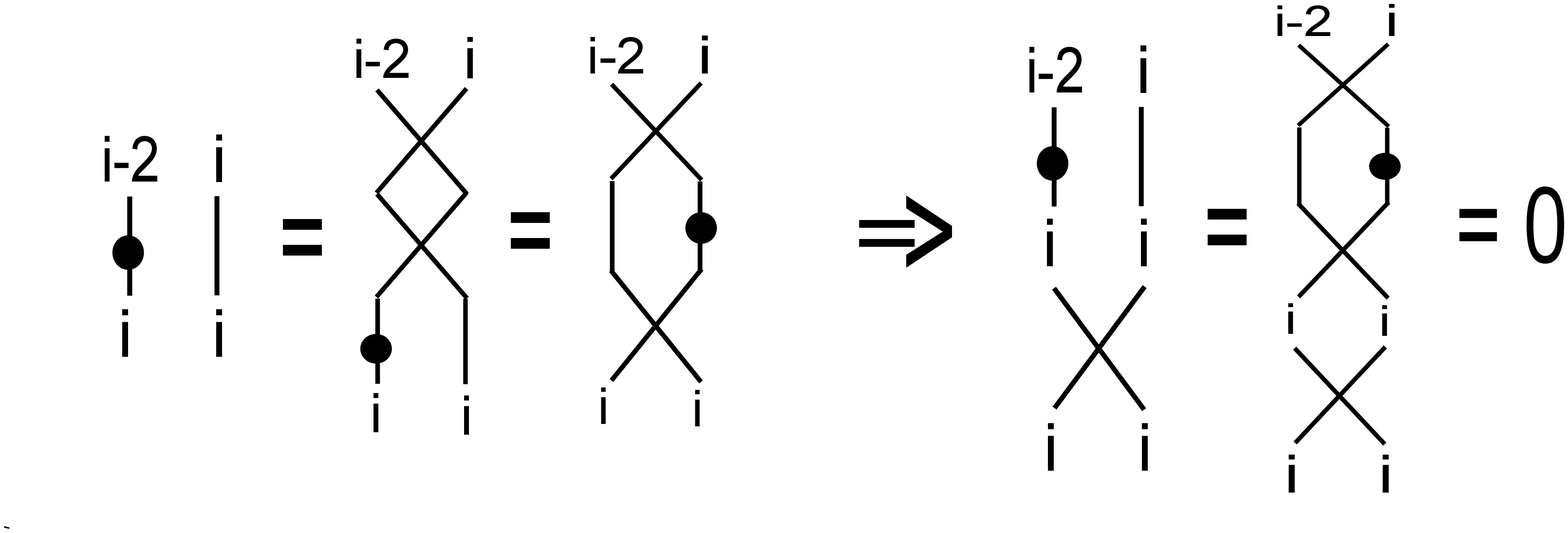}
\end{overpic}
\caption{}
\label{v7}
\end{figure}
\end{proof}

A diagram $f \in D(\mf{x},\mf{y})$ is {\em nonzero} if $[f]\neq 0 \in \Hom(\mf{x},\mf{y})$.
The following lemma is an easy consequence of the relations (LD).
\begin{lemma}\label{LRP}
Left-to-Right Principle of dot slide (LRP) holds: in a nonzero diagram we can always perform a dot slide from left to right in $\cv$.
\end{lemma}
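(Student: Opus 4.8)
The plan is to read Lemma \ref{LRP} as the statement that the only local obstructions to moving a dot rightwards past a crossing are the two configurations (LD-1) and (LD-2) of Figure \ref{v6}, and that these obstructions are identically zero in $\cv$. Suppose $f \in D(\mf{x},\mf{y})$ is a nonzero diagram in which a dot lies on a strand that, read upwards, becomes the left-hand strand of a crossing. First I would use the idempotent relation (L1-a), together with (L1-b), to move the dot vertically along its strand until it sits immediately below that crossing, so that locally we see a ``dot-to-the-left-of-a-crossing'' picture. Comparing the cohomological gradings $\op{gr}$ of the two crossings that would appear on the two sides of the naive dot slide --- the parity bookkeeping recorded in the paragraph just before Lemma \ref{LD}, governed by the formula for $\op{gr}(cr_{i,j})$ --- shows that this slide is a legitimate relation, namely an instance of (L5), unless the local picture is exactly (LD-1) or (LD-2).

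Next I would invoke Lemma \ref{LD}: each of the diagrams (LD-1) and (LD-2) equals the zero morphism of $\cv$. Since every morphism of $\cv$ is a vertical composition of generating diagrams, and pre- or post-composing with $0$ yields $0$, any diagram containing one of these two configurations as a local piece represents $0 \in \Hom_{\cv}(\mf{x},\mf{y})$, contradicting the hypothesis that $f$ is nonzero. Hence neither exceptional configuration occurs in $f$, so every dot sitting just below and to the left of a crossing can be pushed to the right by (L5). Iterating this --- and observing that if some rightward slide ever produced one of the exceptional configurations then the whole diagram would again be $0$ --- we may push dots arbitrarily far to the right, which is precisely the assertion of (LRP).

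The only step with genuine content is the case analysis in the first paragraph: checking that the grading condition ``$x \neq i$'' (resp.\ ``$x \neq i+2$'') of (L5), together with the two pictures of Figure \ref{v6}, really exhausts every way a dot can sit to the left of a crossing. This is a short finite verification on the parities of the labels, using $\op{gr}(cr_{i,j}) = (-1)^{i-j}$ for $i<j$ and $(-1)^{i-j+1}$ for $i \geq j$; once it is in place the rest of the argument is formal. A minor point to be careful about is that in a given presentation of $f$ the dot need not be drawn adjacent to the crossing, but (L1-a)--(L1-b) let us normalise this at no cost, without disturbing the rest of the diagram.
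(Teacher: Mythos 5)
Your argument is correct and is exactly what the paper intends: the paper states (LRP) follows as an ``easy consequence'' of the relations (LD) without further detail, and your write-up supplies precisely that reasoning --- any local dot-left-of-crossing configuration is either an instance of (L5) or one of the two exceptional pictures of Figure \ref{v6}, which vanish by Lemma \ref{LD}, so a nonzero diagram cannot contain them. One small mislabel: in $\cv$ the relation (L1-b) is ``isotopy of disjoint diagrams,'' not ``isotopy of a single strand'' (the latter is (R1-b) of $\cl$, which has no analogue in $\cv$, whose morphisms are rigid diagrams rather than isotopy classes), though (L1-a) together with (L1-b) still accomplish the normalization step you invoke.
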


\begin{rmk}
There are two cases when we cannot slide a dot to the right of a crossing to left using (L5).
We will prove that the corresponding two diagrams are actually nonzero in Proposition \ref{hom}.
Hence it is not always true that a dot can be slid from right to left in $\cv$.
\end{rmk}

\n $\bullet$ {\bf Charge and energy:}
Consider a grading $\op{ch}: \E(\cv) \ra \Z$ given by $\op{ch}(\mf{x})=k$ if $x_n=2n+2k$ for $n \gg 0$, where $\mf{x}=(x_1, x_2, \dots)$.
The grading $\op{ch}$ is called the {\em charge} following \cite{Kac}.
There is a corresponding decomposition:
$$\E(\cv)=\bigsqcup_k \E(\cv)_k,$$
where $\E(\cv)_k$ consists of elementary objects of charge $k$.
Since any diagram in a morphism set consists of trivial vertical strands after finitely many terms, we
have an easy observation:

\begin{lemma} \label{charge}
The charge gives a decomposition of the category $\cv$:
$$\Hom_{\cv}(\mf{x},\mf{y})=0 \quad \mbox{if} \quad \op{ch}(\mf{x})\neq \op{ch}(\mf{y}).$$
\end{lemma}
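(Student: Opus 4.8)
The plan is to observe that $\op{ch}$ is a ``tail invariant'' of elementary objects and that every diagram stabilizes to trivial vertical strands, so that the source and target of any nonzero morphism necessarily have the same charge; the statement then follows by contraposition.

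In detail, I would proceed in three steps. First, note that the value $\op{ch}(\mf{x})=k$ depends only on the behaviour of $\mf{x}=(x_1,x_2,\dots)$ for large indices: if $\mf{x},\mf{y}\in\E(\cv)$ satisfy $x_n=y_n$ for all $n$ past some bound, then $\op{ch}(\mf{x})=\op{ch}(\mf{y})$, since the defining condition $x_n=2n+2k$ for $n\gg0$ only constrains the tail. Second, recall from the definition of $D(\mf{x},\mf{y})$ that any diagram $f$ between $\mf{x}$ and $\mf{y}$ consists of trivial vertical strands in the region $\{x\in\R~|~x\ge N(f)\}\times[0,1]$ for some $N(f)\gg0$; in particular the labels at the bottom and top endpoints agree in all positions $\ge N(f)$, i.e.\ $x_n=y_n$ for $n\ge N(f)$. (Equivalently one may check this one generating diagram at a time: an identity, a crossing $cr_{i,j}$, or a dotted strand $dot_i$ acts only on finitely many strands and hence leaves the tail of the label sequence unchanged.) Third, combining the two steps, any $f\in D(\mf{x},\mf{y})$ forces $\op{ch}(\mf{x})=\op{ch}(\mf{y})$; hence if $\op{ch}(\mf{x})\ne\op{ch}(\mf{y})$ then $D(\mf{x},\mf{y})=\es$, and since $\Hom_{\cv}(\mf{x},\mf{y})$ is by definition the $\F$-vector space on $D(\mf{x},\mf{y})$ modulo the local relations, it vanishes.

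I do not expect any real obstacle here: the whole argument rests on the stabilization property already built into the definition of morphisms in $\cv$, together with the fact that charge is determined by the tail. The one point requiring a moment's care is that a dotted strand $dot_i$ genuinely changes a label (from $i+2$ to $i$), unlike a crossing which merely permutes two adjacent labels; but it does so at a single finite position and therefore cannot affect the eventual arithmetic progression $x_n=2n+2k$ that defines $\op{ch}$.
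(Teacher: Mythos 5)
Your argument is correct and matches the paper's own reasoning, which is just the one-line observation preceding the lemma: any $f\in D(\mf{x},\mf{y})$ consists of trivial vertical strands past some finite position $N(f)$, so $x_n=y_n$ for $n\ge N(f)$ and hence $\op{ch}(\mf{x})=\op{ch}(\mf{y})$. You have simply spelled out the same stabilization argument in full, including the (correct) remark that a dotted strand changes only a single finite label and so cannot affect the tail condition defining charge.
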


\begin{rmk}
Recall that a dividing set divides the surface into positive and negative regions.
The {\em Euler number} of a dividing set is the Euler characteristic of the positive region minus the Euler characteristic of the negative region.
The charge of $\mf{x} \in \E(\cv)$ is related to the Euler number of the corresponding dividing set in $\Cv$.
\end{rmk}

There is a distinguished object $\es(k)=(2k+2, 2k+4, \dots) \in \E(\cv)_k$, called {\em vacuum object} of charge $k$.
The {\em energy} $\op{E}_k$ is defined on $\E(\cv)_k$ by
$$\begin{array}{cccc}
\op{E}_k:& \E(\cv)_k & \ra & \Z \\
& (x_1, x_2, \dots)  & \mapsto & \frac{1}{2} \sum\limits_{i=1}^{\infty}(2k+2i-x_i),
\end{array}$$
where only finitely many terms are nonzero in the sum.
The energy measures the total difference between $\es(k)$ and $\mf{x} \in \E(\cv)_k$.
In particular, the vacuum object has energy zero.
Note that $\op{E}_k(\mf{x})<0$ implies that $\mf{x}$ contains repetitive numbers: $x_i=x_j$ for some $i \neq j$ and we will show that such $\mf{x}$ is isomorphic to the zero object in the homology category of $\cv$.

\vspace{.2cm}
\n $\bullet$ {\bf Monotonicity:} the key feature of $\cv$ is that any nontrivial morphism either retains or increases the energy.
\begin{lemma} \label{energy}
If the morphism set $\Hom_{\cv}(\mf{x},\mf{y}) \neq 0$ for $\mf{x},\mf{y} \in \E(\cv)_k$, then
$$\op{E}_k(\mf{x})\leq \op{E}_k(\mf{y}).$$
\end{lemma}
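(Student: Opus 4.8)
The plan is to prove monotonicity of the energy by analyzing how each of the three types of elementary diagram changes the energy of the underlying elementary object, and then combining this with the local relations to rule out cancellation. First I would observe that a morphism in $\Hom_{\cv}(\mf{x},\mf{y})$ is a linear combination of diagrams, each of which is a vertical stacking of generating diagrams; since the label at the bottom of such a stack is $\mf{x}$ and the label at the top is $\mf{y}$, it suffices to understand, for a single generating diagram $g \in D(\mf{u},\mf{w})$, the relation between $\op{E}_k(\mf{u})$ and $\op{E}_k(\mf{w})$, and then induct on the number of generating diagrams in a stack.

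For the three elementary diagrams: the identity $id_{\mf{x}}$ obviously preserves energy. The crossing $cr_{i,j}(\mf{a},\mf{x})$ permutes two adjacent entries $x_p, x_{p+1}$ of the underlying object, so by the formula $\op{E}_k(\mf{x}) = \tfrac12\sum_i(2k+2i-x_i)$ the contribution to the energy changes by $\tfrac12\big((2k+2p - x_{p+1}) + (2k+2(p+1) - x_p)\big) - \tfrac12\big((2k+2p-x_p)+(2k+2(p+1)-x_{p+1})\big)$; this is $\tfrac12\big((x_p - x_{p+1}) + (x_{p+1}-x_p)\big) = 0$, so a crossing also preserves energy. Finally the dotted strand $dot_i(\mf{a},\mf{x})$ replaces an entry $i+2$ by $i$ in position $p$, which \emph{increases} $\op{E}_k$ by exactly $1$ (the summand $2k+2p - x_p$ increases by $2$, and there is a factor of $\tfrac12$). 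Hence every generating diagram is energy-nondecreasing, and by composing, every diagram $f \in D(\mf{x},\mf{y})$ with $\mf{x},\mf{y} \in \E(\cv)_k$ satisfies $\op{E}_k(\mf{x})\leq \op{E}_k(\mf{y})$, with equality precisely when $f$ contains no dotted strands.

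The remaining point — and this is where care is needed — is that $\Hom_{\cv}(\mf{x},\mf{y})$ is a \emph{quotient} of the span of $D(\mf{x},\mf{y})$ by the relations $\cal{L}$, so a priori a nonzero element could be represented by a diagram while every individual diagram class is zero, or one might worry a relation identifies diagrams with different energies. The first worry is vacuous: if $\Hom_{\cv}(\mf{x},\mf{y})\neq 0$ then by definition its spanning set $D(\mf{x},\mf{y})$ is nonempty, and every element of $D(\mf{x},\mf{y})$ is an honest diagram from $\mf{x}$ to $\mf{y}$, to which the generating-diagram analysis applies. For the second worry I would inspect the relations (L1)--(L5) (together with (LD) from Lemma~\ref{LD}) and note that each one equates diagrams with the same pair of endpoint labels $(\mf{x},\mf{y})$ and the same number of dots; since the energy shift depends only on the endpoint labels, both sides of every relation induce the same inequality, so the relations are irrelevant to the statement. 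The main obstacle is thus purely bookkeeping: verifying that the energy formula transforms as claimed under a crossing (a short computation as above) and confirming that none of the local relations — in particular the double-crossing relation (L3-a) which sets a diagram equal to zero and the dot-slide relations — can produce a diagram from $\mf{x}$ to $\mf{y}$ with $\op{E}_k(\mf{x}) > \op{E}_k(\mf{y})$; but since a diagram from $\mf{x}$ to $\mf{y}$ automatically obeys the inequality regardless of what it equals in the quotient, this reduces to the elementary observation that $D(\mf{x},\mf{y})\neq\es$ whenever $\Hom_{\cv}(\mf{x},\mf{y})\neq 0$.
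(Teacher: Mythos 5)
Your proof is correct and takes essentially the same approach as the paper: the paper's one-line argument is precisely that each elementary diagram (identity, crossing, dot) either preserves or increases $\op{E}_k$, which you verify by direct computation and then combine with the observation that $\Hom_{\cv}(\mf{x},\mf{y})\neq 0$ forces $D(\mf{x},\mf{y})\neq\es$.
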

\begin{proof}
It follows from the fact that any elementary diagram either retains or increases the energy by $1$.
\end{proof}
This monotonicity with respect to the energy makes $\cv$ much easier to understand than $\cl$.
We will give an algebraic formulation of $\cv$ based on this property.

\subsection{The differential}

\subsubsection{Motivation}
We try to mimic the differential in $\cl$.
The only difficulty is that a differential of a dotted strand contains a cup and a cap which do not exist in $\cv$.
We modify the definition as follows.

Consider a finite reduction of $\cv$ first, where labels of strands are in $\{0,2,\dots, 2n-2\}$.
Let $U(n)$ be a free abelian group generated by increasing sequences of even integers in $\{0,2,\dots, 2n-2\}$.
Let $Cl(n)$ be a subalgebra of $Cl_{\Z}$ generated by $\{a_i ~|~ 0 \leq i \leq 2n-1\}$, and $J(n)$ be a left ideal of $Cl(n)$ generated by $\{a_i ~|~ i ~\mbox{odd}, ~0 \leq i \leq 2n-1\}$.
Then $U(n)$ is isomorphic to a quotient $Cl(n) / J(n)$ as $Cl(n)$-modules.
Under this isomorphism, any sequence of integers in $\{0,1,\dots,2n-1\}$ with an odd term in the last is in $J(n)$ and equals zero in $U(n)$.
Therefore, it is reasonable to set the sequence as the zero object in a diagrammatic category which lifts $U(n)$.

We now apply this idea to a few examples.
For instance in Figure \ref{v3}, the differential a dotted strand $d(dot_0)$ is set to be zero since the intermediate state $(2,0,1)$ is the zero object.
\begin{figure}[h]
\begin{overpic}
[scale=0.2]{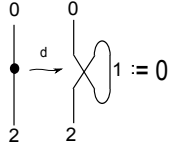}
\end{overpic}
\caption{The differential is zero since the label of the rightmost strand is odd}
\label{v3}
\end{figure}

If there are some strands to the right of the dotted strand, we use the double crossing relation (R4) and triple intersection relation (R5) to transport the strand with odd label from left to right and straighten the diagram whenever it is possible.
See Figure \ref{v3-1} for more examples.
\begin{figure}[h]
\begin{overpic}
[scale=0.2]{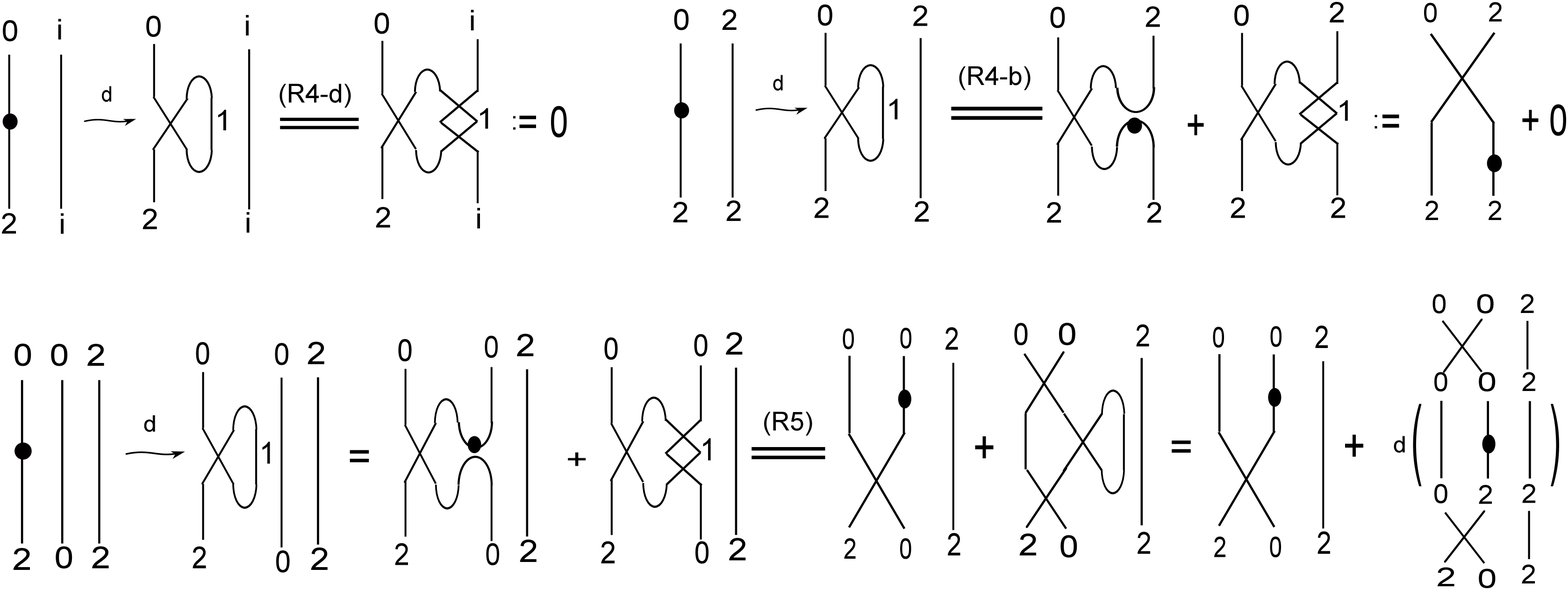}
\put(12,20){$i \neq 0,2$}
\end{overpic}
\caption{Transporting an odd strand from left to right using (R4) and (R5)}
\label{v3-1}
\end{figure}

Note that the strand with label $1$ in the differential of $dot_0$ commutes with any strand to its right with label $i \neq 0,2$ as in the top left diagram of Figure \ref{v3-1}.
For the semi-infinite case, any strand with an odd label commutes with all but finitely many strands since $x_n=2n+2k$ for $n\gg0$.
We define the differential as zero if the odd strand commutes with all strands to its right.

\subsubsection{Definition of differential}
We define the differential on the generating diagrams in the following and extend it to general diagrams by Leibniz's rule with respect to the vertical stacking.

\vspace{.2cm}
\n(Case 1) For an identity and a crossing, the definition is given in Figure \ref{v5}.

\vspace{.2cm}
\n(Case 2, Step 1) For a dotted strand $dot_{i}(\mf{a}, \mf{x})$, the differential passes through the trivial strands $id_{\mf{a}}$ as in Figure \ref{v5}.
Hence we reduce to the case where $\mf{a}=(\es)$.

\vspace{.1cm}
\n(Case 2, Step 2) For a dotted strand $dot_{i}((\es), \mf{x})$, the differential is defined inductively on the number $\beta(\mf{x},i)=|\{k \in \Z ~|~ x_k=i ~\mbox{or}~ i+2\}|$:
\be
\item if $\beta(\mf{x},i)=0$, then let $d(dot_{i}((\es), \mf{x}))=0$;
\item if $\beta(\mf{x},i)\neq0$, then write $\mf{x}=(j)\cdot\mf{y}$ and define $d(dot_{i}((\es), \mf{x}))$ in terms of $d(dot_{i}((\es), \mf{y}))$ as in Figure \ref{v5}.
\ee
The induction terminates after finitely many steps since $x_n=2n+2k$ for $n\gg0$.
\begin{figure}[h]
\begin{overpic}
[scale=0.22]{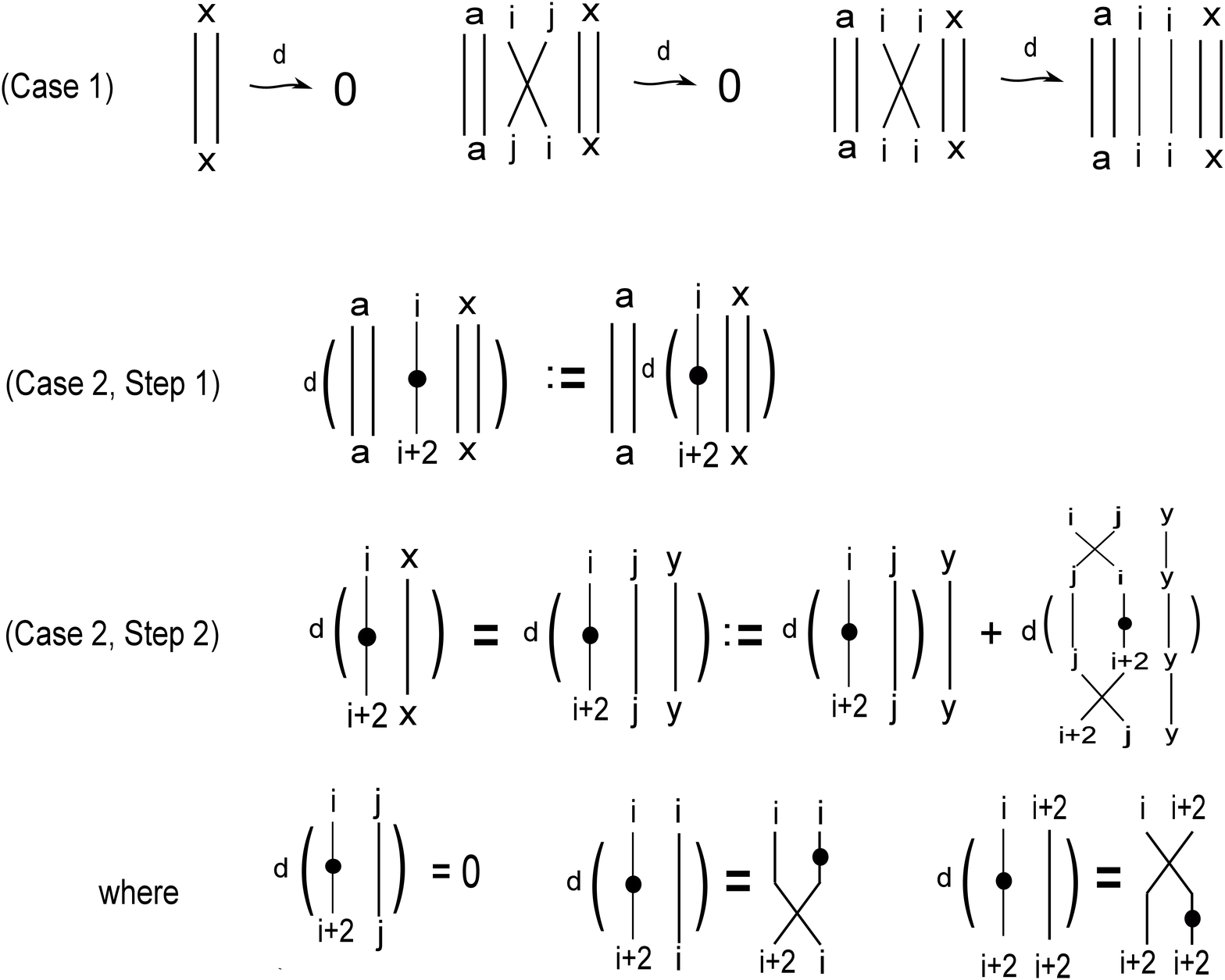}
\put(40,62){$i \neq j$}
\put(21,-2){$j \neq i,i+2$}
\end{overpic}
\caption{Differential on the generating diagrams}
\label{v5}
\end{figure}
\begin{rmk}
The differential does not satisfy Leibniz's rule with respect to the horizontal stacking.
\end{rmk}

\subsubsection{Well-defined differential}
We show that (1) the relations of $\cv$ are preserved under $d$; and (2) $d^2=0$.

\begin{lemma}
The relations of $\cv$ are preserved under $d$.
\end{lemma}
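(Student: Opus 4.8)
The plan is to use the fact that $d$ is completely determined by its values on the generating diagrams (Figure~\ref{v5}) together with Leibniz's rule for vertical stacking. Consequently, to show that a local relation $L=R$ in $\cal{L}$ is preserved it suffices to compute $d(L)$ and $d(R)$ by Leibniz and check that the two results agree as elements of $\Hom_{\cv}$, that is, modulo $\cal{L}$ and modulo the consequences (LD) of Lemma~\ref{LD}. I would organize the verification by the five groups (L1)--(L5), treating the formal relations first and the dot-sensitive ones last.

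First, the isotopy relations (L1) and the triple intersection moves (L4) are essentially bookkeeping. For (L1-a), (L1-b) one uses that $d$ is compatible with adjoining a trivial strand: this is exactly how Case~1 (for identities and crossings) and Case~2, Step~1 (for dotted strands) of the definition were set up, so $d(L)$ and $d(R)$ are literally the same picture. For (L4), applying Leibniz to a triple point produces a sum of diagrams each of which can be brought to a standard position using (L3) and (L5), and both sides of a triple-point move yield the same standard sum. These I expect to be routine.

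Next come the double-dot relation (L2) and the double-crossing relations (L3). For (L2), write a double-dotted strand as the vertical composite $dot_i\circ dot_{i+2}$; Leibniz gives $d(dot_i)\circ dot_{i+2}+dot_i\circ d(dot_{i+2})$, and one expands each $d(dot)$ via the recursion of Case~2, Step~2 and checks that every summand either contains a double dot (hence dies by (L2)) or cancels against another summand, so the total is $0=d(0)$. For (L3-a), a double crossing $cr_{i,i}\circ cr_{i,i}$ equals $0$, and one verifies $d(cr_{i,i})\circ cr_{i,i}+cr_{i,i}\circ d(cr_{i,i})=0$ using that the relevant crossing-differentials carry the label pattern $(i,i)$, which collapses via (L3-a) and (L2). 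For (L3-b) one similarly checks that the Leibniz expansion of $d(cr_{i,j}\circ cr_{j,i})$ with $i\neq j$ vanishes, matching $d(id)=0$, the two contributions cancelling in pairs after straightening with (L3-b).

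The substantive part, and the step I expect to be the main obstacle, is the dot-slide group (L5) (together with the auxiliary relations (LD)). Here the recursive definition of $d$ on a dotted strand $dot_i((\es),\mf{x})$ via the decomposition $\mf{x}=(j)\cdot\mf{y}$ interacts directly with moving a dot past a crossing, so one must show that the two sides of each dot-slide identity have equal differentials term by term, distinguishing cases according to the position of the label $x$ relative to $i$ and $i+2$. The key tools are Lemma~\ref{LD} and the Left-to-Right Principle (Lemma~\ref{LRP}), which guarantee that the odd-labelled strands created by $d(dot)$ can always be transported to the right and are unobstructed; this is precisely what makes the Case~2, Step~2 recursion independent of the chosen decomposition and compatible with (L5). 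Working over $\F$ removes all sign issues, which simplifies the pairings arising in (L2) and (L3). Once (L5) and (LD) are checked, the lemma follows.
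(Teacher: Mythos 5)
Your overall plan matches the paper's: reduce to the five relation groups, use Leibniz, treat the undotted relations as routine (the paper simply says they are "similar to those of $\cl$"), and handle the dot-sensitive relations by exploiting the recursive definition of $d$ on a dotted strand — reducing to a dot on the leftmost strand and inducting on $\beta(\mf{x},i)$.

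However, you misclassify one case. You lump (L1-b), the isotopy of \emph{disjoint} diagrams, in with (L1-a) and (L4) as "bookkeeping," claiming $d(L)$ and $d(R)$ are "literally the same picture" because $d$ passes through trivial strands. That argument only covers trivial strands to the \emph{left} of a dot (Case 2, Step 1 of the definition). The relation (L1-b) also allows a dotted strand and a disjoint crossing (or a second dotted strand) lying to its \emph{right} to be vertically interchanged — and there the differential of the dot is genuinely nonlocal: by Case 2, Step 2 it depends on every label to the right of the dot, including the labels involved in the disjoint crossing. So exchanging the vertical order of a dot and a disjoint crossing changes the sequence $\mf{x}$ that the recursion for $d(dot_i((\es),\mf{x}))$ sees, and compatibility is not automatic. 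The paper flags exactly this as one of the three relations requiring verification (alongside the double-dot and dot-slide relations), and it is checked by the same $\beta$-induction used for (L2). Your proposal needs this case restored; once that is added, it coincides with the paper's argument in substance.
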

\begin{proof}
The proofs for relations without dots are similar to those of $\cl$ and are left to the reader.
It remains to verify the following relations:
\be
\item double dot relation;
\item dot slide relation;
\item isotopy of disjoint diagrams.
\ee
Since the differential can pass through finitely many trivial strands to the left of the dots, we may assume that the dots are on the leftmost strand with labels $0,2$ or $4$.
The calculation is done by induction on $\beta(\mf{x},0)$ or $\beta(\mf{x},2)$.
Here, we only present the calculation for (1) in Figure \ref{v8}.
The proofs for (2) and (3) are similar and are left to the reader.
\begin{figure}[h]
\begin{overpic}
[scale=0.23]{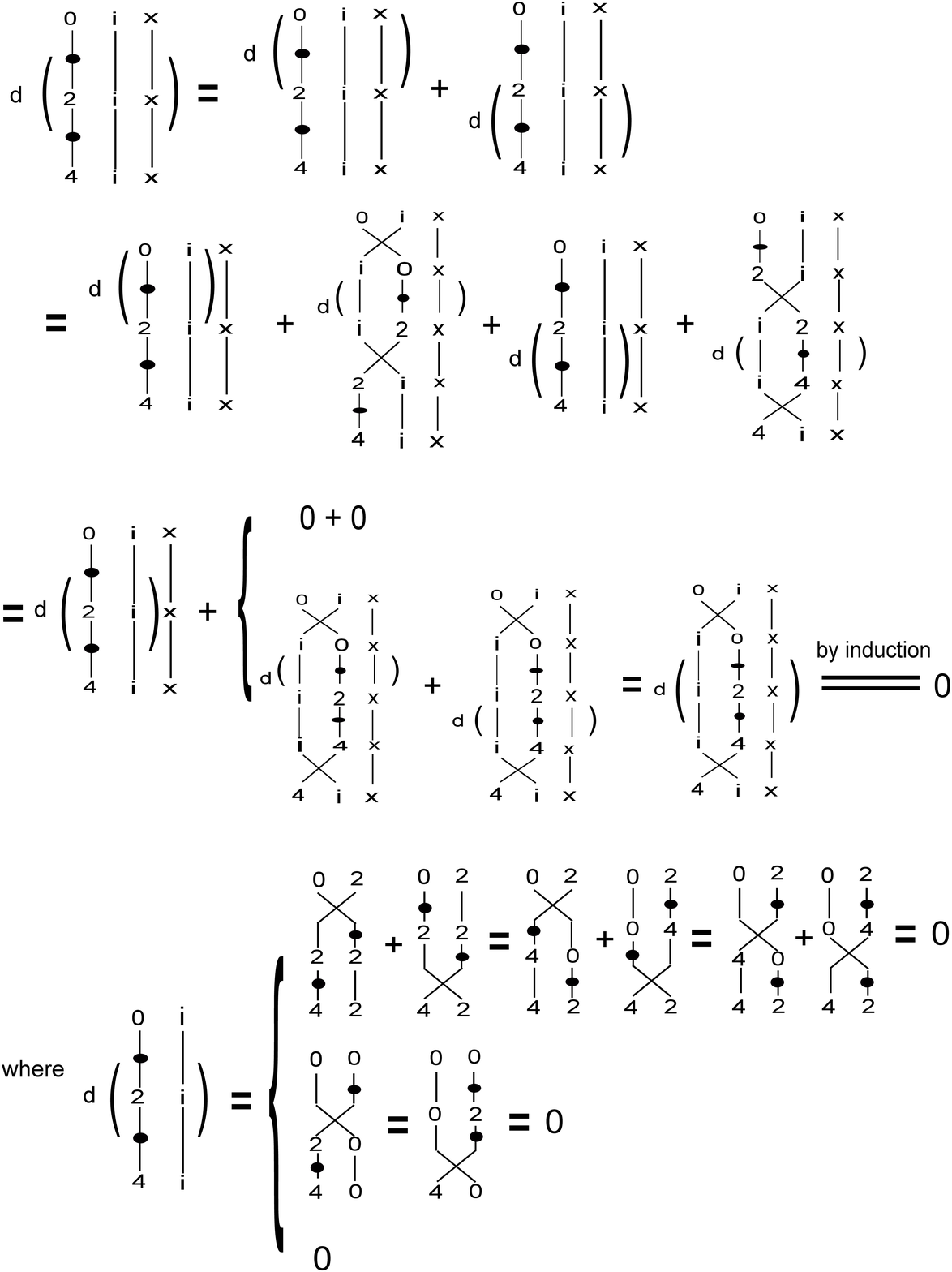}
\put(40,59){if $i=2$}
\put(77,46){if $i\neq 2$}
\put(77,25){if $i = 2$}
\put(50,10){if $i = 0$; $i = 4$ similar to $i=0$}
\put(40,0){if $i \neq 0,2,4$}
\end{overpic}
\caption{Double dot relation}
\label{v8}
\end{figure}
\end{proof}

\begin{lemma}
$d^2=0$.
\end{lemma}
\begin{proof}
It suffices to prove that $d^2=0$ for any generating diagram.
The nontrivial case is for a dotted strand $dot_i(\mf{a},\mf{x})$.
We may assume $\mf{a}=(\es)$ and $i=0$ as before.
The calculation is done by induction on $\beta(\mf{x},0)$ in Figure \ref{v9}.
\begin{figure}[h]
\begin{overpic}
[scale=0.25]{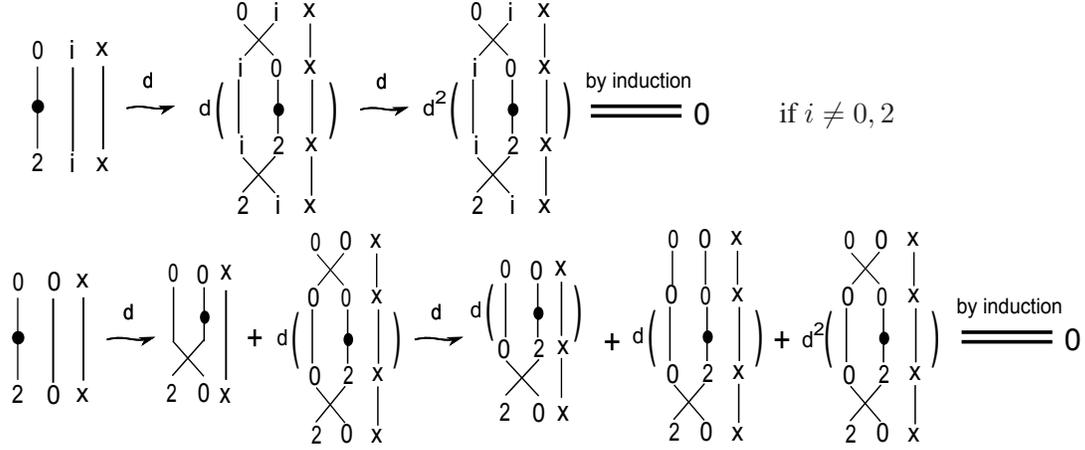}
\put(72,30){if $i \neq 0,2$}
\end{overpic}
\caption{The case $i=2$ is similar to $i=0$}
\label{v9}
\end{figure}
\end{proof}

This completes the description of $\Hom_{\cv}(\mf{x},\mf{y})$ as a complex of $\F$-vector spaces for $\mf{x}, \mf{y} \in \E(\cv)$.

\subsection{Canonical forms of nonzero diagrams}
We show that any morphism set $\Hom(\mf{x},\mf{y})$ is a finite-dimensional $\F$-vector space generated by {\em canonical forms} of nonzero diagrams.

Any diagram is determined by a configuration of strands and positions of dots.
We discuss the configuration first.
A {\em matching} $\sigma$ is a bijection $\sigma: \Z_+ \ra \Z_+$ such that $\sigma(n)=n$ for $n\gg0$.
Any diagram $f \in D(\mf{x},\mf{y})$ determines a matching $\sigma_f$, where $x_i$ is connected to $y_{\sigma_f(i)}$ in $f$.
Then $D(\mf{x},\mf{y})$ has a decomposition:
$$D(\mf{x},\mf{y})=\bigsqcup_{\sigma}D(\mf{x},\mf{y};\sigma),$$
where $D(\mf{x},\mf{y};\sigma)=\{f \in D(\mf{x},\mf{y}) ~|~ \sigma_f=\sigma\}$.
There is a corresponding decomposition of $\Hom(\mf{x},\mf{y})$, as a quotient of a free vector space generated by $D(\mf{x},\mf{y})$:
$$\Hom(\mf{x},\mf{y})=\bigoplus_{\sigma}\Hom(\mf{x},\mf{y};\sigma),$$
since any relation preserves the matching of diagrams.

\begin{lemma}\label{sigmafinite}
For $\mf{x}, \mf{y} \in \E(\cv)_k$, there are only finitely many $\sigma$ such that $D(\mf{x},\mf{y};\sigma)$ is nonempty.
\end{lemma}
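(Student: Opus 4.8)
The plan is to show that a nonzero matching $\sigma$ with $D(\mf{x},\mf{y};\sigma)\neq\es$ must ``preserve charge in a controlled way,'' and that only finitely many bijections of $\Z_+$ satisfying $\sigma(n)=n$ for $n\gg0$ are compatible with the fixed sequences $\mf{x},\mf{y}$. The key constraint comes from the interaction between the energy function $\op{E}_k$ and the matching: an elementary diagram either fixes the matching (a dot, which is the identity matching on its strand) or transposes two adjacent strands (a crossing). Composing such diagrams, any $\sigma_f$ is a finite permutation, and the number of crossings in $f$ is at least the length $\ell(\sigma)$ of $\sigma$ as a permutation. By Lemma \ref{energy}, every crossing that actually occurs is energy-nondecreasing, and a crossing strictly increases the energy unless the two strands being swapped carry equal or ``admissible'' labels; combined with the hypothesis $\mf{x},\mf{y}\in\E(\cv)_k$ and the fact that $\op{E}_k(\mf{y})-\op{E}_k(\mf{x})$ is a \emph{fixed finite number}, this bounds the total energy gain along $f$, hence bounds how far $\sigma$ can move strands.

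First I would make precise how $\sigma$ moves labels. If $\sigma_f = \sigma$, then the $i$-th input strand, carrying label $x_i$, is the same strand as the $\sigma(i)$-th output strand, carrying label $y_{\sigma(i)}$; but dotted strands change labels, so we only get $x_i = y_{\sigma(i)}$ when no dot sits on that strand, and in general $x_i \geq y_{\sigma(i)}$ on even strands (a dot decreases an even label by $2$). Summing $\tfrac12(2k+2i - x_i)$ over $i$ and comparing with the analogous sum for $\mf{y}$, together with the monotonicity, shows that $\sum_i \tfrac12(x_i - y_{\sigma(i)})$ is a bounded nonnegative quantity, say $\leq E_0 := \op{E}_k(\mf{y})-\op{E}_k(\mf{x})$ (which is finite and depends only on $\mf{x},\mf{y}$). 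Crucially, since both $\mf{x}$ and $\mf{y}$ are eventually equal to $\es(k)$, outside a fixed finite index range $[1,N]$ we have $x_i = y_i = 2i+2k$, so any $\sigma$ that moves a strand across index $N$ is forced to create a large energy jump incompatible with the bound $E_0$. Hence $\sigma$ must fix all indices $> N$ and also cannot permute indices within $[1,N]$ too wildly.

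The cleanest way to finish is: let $N$ be the smallest integer with $x_i = y_i = 2i+2k$ for all $i > N$ (finite by definition of $\E(\cv)$). I claim any admissible $\sigma$ satisfies $\sigma(i)=i$ for $i>N$; given this, $\sigma$ restricts to a permutation of $\{1,\dots,N\}$, of which there are only $N!$, so only finitely many $\sigma$ occur. To prove the claim, suppose $\sigma(i)\neq i$ for some $i>N$; then $\sigma$ moves the strand with input label $x_i = 2i+2k$ to an output position $j=\sigma(i)$, and since $\sigma$ is a finite permutation this forces some index $i'$ with $\sigma(i')\geq$ a value outside $[1,N]$ as well, producing a mismatch $x_{i'} \neq y_{\sigma(i')}$ that, via Lemma \ref{energy} applied to the subdiagrams, pushes the energy difference above $E_0$ — contradiction. (Alternatively one argues directly: the identity matching on indices $>N$ is forced because on those strands $\mf{x}$ and $\mf{y}$ agree and any transposition reaching that far strictly increases $\op{E}_k$ by Lemma \ref{energy}.)

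I expect the main obstacle to be the bookkeeping in the last step: making airtight the statement that a matching moving a strand past index $N$ \emph{necessarily} forces the energy strictly above $E_0$, rather than merely typically doing so. One has to rule out ``cancellations'' where one strand's label drops while another's rises so that the matching travels far but the net energy stays bounded. This is controlled because dots only ever decrease even labels (never increase them), so the output sequence $\mf{y}$ is obtained from $\mf{x}$ by a permutation together with pointwise decreases; an energy-bounded, finitely-supported permutation of a sequence that is eventually $2i+2k$ cannot have support extending beyond a bound determined by $E_0$ and the finitely many nontrivial terms of $\mf{x},\mf{y}$. Formalizing this ``bounded-support'' estimate — essentially that $\ell(\sigma)$ and the diameter of the support of $\sigma$ are bounded by a function of $E_0$ and $N$ — is the technical heart, but it is elementary combinatorics once the energy inequality is in hand.
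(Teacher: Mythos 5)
Your target is correct — the lemma reduces to showing $\sigma(n)=n$ for all $n\geq N$, where $N$ is chosen so that $\mf{x}$ and $\mf{y}$ agree with the vacuum pattern $2n+2k$ past index $N$ — and this is exactly what the paper proves. But your vehicle, the energy function together with Lemma \ref{energy}, does not actually deliver that conclusion, and you flag the gap yourself without closing it.

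Two concrete problems with the energy route. First, for any finitely supported bijection $\sigma$ the quantity $\sum_i \tfrac12(x_i - y_{\sigma(i)})$ is identically equal to $\op{E}_k(\mf{y}) - \op{E}_k(\mf{x})$, because it is just a rearrangement of the terms of $\sum_i \tfrac12(x_i - y_i)$. So the ``bound'' $\leq E_0$ you derive is an identity that holds for every $\sigma$ and constrains nothing; it cannot rule out any matching. Second, the parenthetical fallback that ``any transposition reaching that far strictly increases $\op{E}_k$'' is false: a crossing $cr_{i,i}$ swaps two strands of equal label and leaves both the sequence and the energy unchanged, so an energy-free permutation can in principle have arbitrarily large support. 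Energy aggregates a pointwise constraint and in the aggregation loses exactly the information needed here.

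What does the work is the \emph{pointwise} version of the fact you mention only in passing: along any strand the label cannot increase from bottom to top (a crossing preserves labels, a dot decreases an even label by $2$). This gives, strand by strand, $y_{\sigma(i)} \leq x_i$ for every $i$. The paper then chooses $N$ so that $x_n=y_n=2n+2k$ for $n\geq N$ \emph{and} $x_i < 2N+2k$ for $i<N$. For $n\geq N$ one has $y_n \geq 2N+2k > x_i$ for every $i<N$, so $y_n$ cannot be the top of a strand starting at $x_i$, forcing $\sigma^{-1}(n) \geq N$; that is, $\sigma$ preserves $\{N, N+1, \dots\}$. Then $y_n > x_N$ for $n>N$ forces $\sigma(N)=N$, and inductively $\sigma(n)=n$ for all $n\geq N$, so $\sigma$ is determined by a permutation of $\{1,\dots,N-1\}$. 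To repair your proof you should drop the energy framing and argue directly from the label inequality, which is sharper precisely because it is not summed.
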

\begin{proof}
There exists $N$ which only depends on $\mf{x}$ and $\mf{y}$ such that $x_n=y_n=2n+2k$ for $n\geq N$, and $x_i< 2N+2k$ for $i < N$.
We will prove that for any diagram $f \in D(\mf{x},\mf{y})$, $\sigma_f(n)=n$ for $n \geq N$.

Firstly, $y_n$ cannot be connected to $x_i$ in $f$ for $i<N\leq n$ because $y_n \geq 2N+2k > x_i$ and a label does not increase along a strand as we go from bottom to top.
Therefore, elements in subsets $\{x_n ~|~ n \geq N\}$ and $\{y_n ~|~ n \geq N\}$ are connected with each other.
Secondly, $x_N$ must be connected to $y_N$ since $y_n>x_N$ for $n>N$.
We inductively prove that $x_n$ is connected to $y_n$, i.e., $\sigma_f(n)=n$ for $n \geq N$.
In particular, there are only finitely many $\sigma$ such that $D(\mf{x},\mf{y};\sigma)$ is nonempty.
\end{proof}

We want to determine a configuration of a canonical diagram in each nonempty $D(\mf{x},\mf{y};\sigma)$ according to the following {\em crossing vector}.
\begin{defn}
Define the crossing vector $\op{CR}: D(\mf{x},\mf{y};\sigma) \ra \N^{\times \Z_+}$ as: $$\op{CR}(f)=(\op{CR}(f)_1, \op{CR}(f)_2, \dots),$$ where $\op{CR}(f)_i$ is the number of crossings of $f$ in the $i$th strip $[i,i+1] \times [0,1]$.
\end{defn}

\begin{defn}
Define an order ``$<$" on $\N^{\times \Z_+}$ as follows: we write $\mf{v}<\mf{w}$ if there exists some $i_0 \in \Z_+$ such that $v_i=w_i$ for $i<i_0$ and $v_{i_0}<w_{i_0}$, where $\mf{v}=(v_1, v_2, \dots)$ and $\mf{w}=(w_1, w_2, \dots)$ in $\N^{\times \Z_{+}}$.
\end{defn}

\begin{defn} \label{defcan}
For nonempty $D(\mf{x},\mf{y};\sigma)$, define the {\em canonical diagram} $can(\mf{x},\mf{y};\sigma) \in D(\mf{x},\mf{y};\sigma)$ as an element satisfying the following conditions:
\be
\item $CR(can(\mf{x},\mf{y};\sigma))=\op{min}\{CR(f)~|~ f \in D(\mf{x},\mf{y};\sigma)\}$ with respect to the order on $\N^{\times \Z_+}$;
\item no dot is to the left of any crossing in $can(\mf{x},\mf{y};\sigma)$.
\ee
\end{defn}

\begin{lemma} \label{unican}
The canonical diagram $can(\mf{x},\mf{y};\sigma)$ is unique in nonempty $D(\mf{x},\mf{y};\sigma)$.
\end{lemma}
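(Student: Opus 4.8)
The plan is to establish uniqueness in two stages corresponding to the two conditions in Definition~\ref{defcan}: first show that the configuration of strands (the crossing pattern) is uniquely determined by the requirement that $\op{CR}$ be minimal, and then show that, with the configuration fixed, condition~(2) pins down the placement of the dots.

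For the first stage, I would argue that among diagrams in $D(\mf{x},\mf{y};\sigma)$, minimality of $\op{CR}$ with respect to the order ``$<$'' forces a \emph{reduced} configuration: no two strands cross more than once, and no strand crosses itself. If two strands crossed twice, we could try to remove the double crossing using (L3-b) when the labels differ, strictly decreasing $\op{CR}$ in the leftmost strip where the excess crossing occurs, contradicting minimality; if the labels agree, (L3-a) would make the diagram zero, so it is not even a valid element to compare (or, more carefully, we only need minimality among \emph{nonzero} diagrams, and a zero diagram is not a canonical form). A self-crossing can be removed by an isotopy using (L1-a) together with the triple-intersection moves (L4), again decreasing $\op{CR}$. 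Hence a canonical diagram is reduced, and a reduced configuration realizing a fixed matching $\sigma$ is unique up to isotopy rel boundary — this is the standard fact that reduced wiring diagrams for a permutation are unique. Here one must use Lemma~\ref{sigmafinite} to know that $\sigma$ differs from the identity in only finitely many positions, so that the whole argument takes place in a finite strip and ordinary reduced-word combinatorics applies.

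For the second stage, the configuration is now fixed; a diagram is then determined by how many dots sit on each of the finitely many arcs and at what heights. By Lemma~\ref{LRP} (the Left-to-Right Principle) every dot can be slid to the right past any crossing it meets, so in a nonzero diagram we may push all dots to be clear of crossings; condition~(2) says precisely that the canonical diagram has no dot to the \emph{left} of a crossing, i.e.\ every dot has been pushed as far right as the configuration permits, landing in the ``top'' segment of its arc (the portion above all crossings on that arc). The relations (L1-a) let us slide a dot freely along a strand through no crossings, so the height of a dot within that top segment is irrelevant; and (L2) forces at most one dot per arc. What remains is to check that the number of dots on each arc is itself determined: this number is dictated by the labels $\mf{x}$ and $\mf{y}$ at the two endpoints of the arc, since a dot decreases an even label by $2$ and a strand carries a well-defined label on each segment between crossings (dot-slide (L5) and the triple moves (L4) are label-preserving in the appropriate sense). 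So the dot data is forced, and combined with stage one the canonical diagram is unique.

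The main obstacle I anticipate is stage one — specifically, making rigorous that minimality of the crossing vector in the chosen lexicographic-type order, rather than merely of the total crossing number, genuinely forces reducedness, and that no ``trapped'' configuration (e.g.\ a self-crossing that cannot be removed without temporarily increasing crossings in an earlier strip) can occur. I expect this to come down to a careful induction on strips from left to right, using (L4) to move crossings rightward without creating new ones to the left, so that any non-reduced feature can be pushed into a strip where it is removable; the finiteness from Lemma~\ref{sigmafinite} guarantees the induction terminates. The dot-placement stage is comparatively routine given Lemmas~\ref{LD} and~\ref{LRP}.
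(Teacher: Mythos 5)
Your two-stage plan mirrors the paper's proof (configuration first, then dot placement), but stage one contains a genuine error. You conclude that ``a reduced configuration realizing a fixed matching $\sigma$ is unique up to isotopy rel boundary --- this is the standard fact that reduced wiring diagrams for a permutation are unique.'' That is false. Reduced wiring diagrams for a fixed permutation are unique only up to braid (triple-intersection) moves, and a braid move is not a planar isotopy: it passes through a triple point and changes the crossing vector $\op{CR}$. The simplest example is the longest element of $S_3$: the reduced diagrams $s_1 s_2 s_1$ and $s_2 s_1 s_2$ realize the same matching, are both reduced, yet are distinct diagrams with $\op{CR}=(2,1,0,\dots)$ and $(1,2,0,\dots)$. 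Reducedness therefore does not single out a configuration; what does is the lexicographic minimality of $\op{CR}$ in condition~(1), which in this example picks $s_2 s_1 s_2$ because $(1,2,0,\dots)<(2,1,0,\dots)$. This is precisely what the paper's proof leans on --- it asserts directly that condition~(1) determines the configuration --- whereas your argument uses the lex order only to obtain reducedness and then throws the extra information away, leaving the uniqueness of the configuration unproved.

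Two smaller points. First, you repeatedly conflate operations on morphisms (the relations (L3-a), (L3-b), Lemma~\ref{LRP}) with operations on diagrams: the canonical diagram is a particular element of $D(\mf{x},\mf{y};\sigma)$ chosen by a diagram-level criterion, and whether a competing diagram is zero \emph{as a morphism} is irrelevant to the minimization in Definition~\ref{defcan}, so the aside ``we only need minimality among nonzero diagrams'' misreads the definition. Second, every strand in a $\cv$-diagram is monotone in the vertical coordinate (there are no cups or caps in $\cv$), so self-crossings never occur and that subcase of your reducedness argument is vacuous. The dot-placement stage is essentially the paper's argument and is fine once rephrased at the diagram level: with the configuration fixed, each strand has a single connected portion lying to the right of all its crossings (the portion attaining the local maxima under the projection to $\R$), condition~(2) forces every dot onto that portion, and the number of dots on a strand is determined by the labels at its two endpoints.
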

\begin{proof}
The configuration of strands in $can(\mf{x},\mf{y};\sigma)$ is uniquely determined by the condition (1).
Moreover, for any strand in $can(\mf{x},\mf{y};\sigma)$, portions of the strand which attain local maximal values under the projection $\R \times [0,1] \ra \R$ have only one connected component.
All the dots on the strand must lie on this connected portion by the condition (2).
Hence $can(\mf{x},\mf{y};\sigma)$ is unique in $D(\mf{x},\mf{y};\sigma)$.
\end{proof}

\begin{rmk}\label{rmkcan}
Any canonical diagram has at most one crossing in the first strip $[1,2] \times [0,1]$.
\end{rmk}

\begin{lemma} \label{can}
(1) The canonical diagram $can(\mf{x},\mf{y};\sigma)$ has the maximal grading in $D(\mf{x},\mf{y};\sigma)$.

\n(2) Any nonzero diagram in $D(\mf{x},\mf{y};\sigma)$ is equivalent to $can(\mf{x},\mf{y};\sigma)$ under the relations of $\cv$.
\end{lemma}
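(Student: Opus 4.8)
The plan is to prove (2) first, by a reduction algorithm bringing any nonzero diagram to canonical form, and then to read off (1) from the fact that the relations preserve the grading.

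For (2): fix a nonzero $f\in D(\mf{x},\mf{y};\sigma)$. First I would push all dots to the right. Whenever a dot sits immediately to the left of a crossing, slide it through: the naive move (L5) applies unless we are in one of the two configurations (LD) of Figure~\ref{v6}, but Lemma~\ref{LRP} says that in the nonzero diagram $f$ the dot can be moved to the right anyway. This terminates --- e.g. because the sum of the horizontal coordinates of the dots strictly increases and is bounded in terms of the (finitely many) crossings --- leaving an equivalent nonzero diagram $f'$ with no dot to the left of any crossing. Next I would minimize the crossings. If the underlying configuration of $f'$ is not reduced, two strands cross at least twice; picking two consecutive such crossings and using triple moves (L4) to sweep all other strands (and, via (L5), any dot) out from between them, I isolate a double crossing lying in $\Hom((i,i),(i,i))$ or in $\Hom((i,j),(i,j))$ with $i\neq j$. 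The former is excluded by (L3-a) since $[f]\neq 0$, so the latter occurs and (L3-b) deletes the double crossing, strictly lowering the number of crossings. Iterating reaches a diagram with a reduced configuration and no dot left of a crossing; a final round of triple moves (L4) --- all reduced configurations realizing a fixed matching being connected by such moves, just as for reduced words of a permutation --- brings the crossing vector down to its lexicographic minimum. By the uniqueness in Lemma~\ref{unican} this diagram is $can(\mf{x},\mf{y};\sigma)$, so $f$ is equivalent to it.

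For (1): dots have grading $0$, so $\op{gr}$ of a diagram is the sum of the gradings of its crossings, and the relations of $\cv$ preserve $\op{gr}$; hence (2) already gives $\op{gr}(f)=\op{gr}(can(\mf{x},\mf{y};\sigma))$ for every nonzero $f\in D(\mf{x},\mf{y};\sigma)$. For a diagram $f$ with $[f]=0$ I would bound $\op{gr}(f)$ directly: reading $f$ from bottom to top and telescoping the change of the number of inversions of the label sequence across the crossings and the dots yields $\op{gr}(f)=\Phi(\mf{x})-\Phi(\mf{y})+\sum_{\text{dots}}\delta(d)-N(f)$, where $\Phi(\mf{x}),\Phi(\mf{y})$ depend only on the endpoints, $N(f)\ge 0$ counts the crossings at which the two strands carry equal labels, and $\delta(d)$ is an explicit local contribution at the dot $d$. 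One then checks that the canonical placement of dots (on the unique rightmost portion of each strand, as in the proof of Lemma~\ref{unican}) maximizes $\sum_d\delta(d)$, and that a diagram with an equal-label crossing is never of minimal crossing vector; together these give $\op{gr}(f)\le\op{gr}(can)$.

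The main obstacle is the interaction between the two reduction phases in (2): I must ensure that whenever I want to collapse a bigon I can also slide any dot trapped inside it out of the way, and, more delicately, that the obstructions (LD) to sliding dots never block the reduction of a nonzero diagram --- which is exactly where Lemma~\ref{LRP} is essential, and where one must argue that an (LD)-type obstruction that genuinely cannot be cleared forces the diagram to be zero (via a double dot or (L3-a)). A secondary point requiring care is the purely combinatorial claim underlying the last step, that triple moves suffice to reach the lexicographically minimal crossing vector among all reduced configurations realizing $\sigma$, together with Remark~\ref{rmkcan}; and, for (1), the verification that the canonical diagram optimizes $\sum_d\delta(d)$ and carries no equal-label crossings.
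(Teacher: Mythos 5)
Your reduction algorithm for part (2) is essentially the paper's argument: bring any nonzero diagram to canonical form by sliding dots, cancelling double crossings, and applying triple moves, noting that the obstructions --- an (L3-a) double crossing $cr_{i,i}$, or one of the two (LD) configurations --- would make the class vanish and contradict $[f]\neq 0$. Your phased organization (dots first, then bigons, then a final round of triple moves down to the lexicographically minimal crossing vector) is a legitimate elaboration of the same idea.

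Part (1) is where you diverge from the paper and where there is a real gap. Your bound on $\op{gr}(f)$ for diagrams with $[f]=0$ hinges on the claim that a diagram of minimal crossing vector carries no equal-label crossings, i.e.\ that $N(can)=0$. This is false: take $\mf{x}=\mf{y}$ with $x_1=x_2$ (Lemma~\ref{can} is stated for all of $\E(\cv)$, including sequences with repeated entries) and $\sigma$ the transposition of the first two positions; every diagram realizing $\sigma$ has an odd number of crossings between strands $1$ and $2$, all carrying equal labels, so the canonical one has exactly one such crossing and $N(can)=1$. Your telescoping identity $\op{gr}(f)=\Phi(\mf{x})-\Phi(\mf{y})+\sum_d\delta(d)-N(f)$ is fine, but the desired bound then requires the coupled inequality $\sum_f\delta-N(f)\le\sum_{can}\delta-N(can)$, which does not follow from the two separate claims you invoke. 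The paper sidesteps all of this by treating the local moves of the reduction algorithm --- including the ``illegal'' ones that delete a double $cr_{i,i}$ or perform an (LD)-type slide --- as purely combinatorial operations applicable to \emph{every} diagram, zero or not: each move either preserves the grading (when it is an honest relation of $\cv$) or raises it by $2$ (when it is illegal), so $\op{gr}(can)\ge\op{gr}(f)$ for all $f\in D(\mf{x},\mf{y};\sigma)$. Proving (1) this way, before (2), also makes (2) immediate: if $[f]\neq[can]$, the reduction from $f$ to $can$ must contain an illegal move, and the diagram just before the first such move is zero by (L3-a) or (LD).
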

\begin{proof}
(1) Starting with any diagram $f \in D(\mf{x},\mf{y};\sigma)$, we can perform local operations in Figure \ref{can1} to make $CR(f)$ smaller with respect to the order ``$<$" and eventually reach $can(\mf{x},\mf{y};\sigma)$.
The local operations of Figure \ref{can1} consist of two types:
\begin{description}
\item[Type a] those are not actual relations of $\cv$: a double crossing of $cr_{i,i}$ which is equal to zero by (L3-a); the two cases in (LD) where dot slide relation (L5) fail.
\item[Type b] those are relations of $\cv$.
\end{description}
The operations of Type (a) increase the grading by $2$, and those of Type (b) preserve the grading.
Hence $\op{gr}(can(\mf{x},\mf{y};\sigma)) \geq \op{gr}(f)$ for any $f \in D(\mf{x},\mf{y};\sigma)$.

\begin{figure}[h]
\begin{overpic}
[scale=0.22]{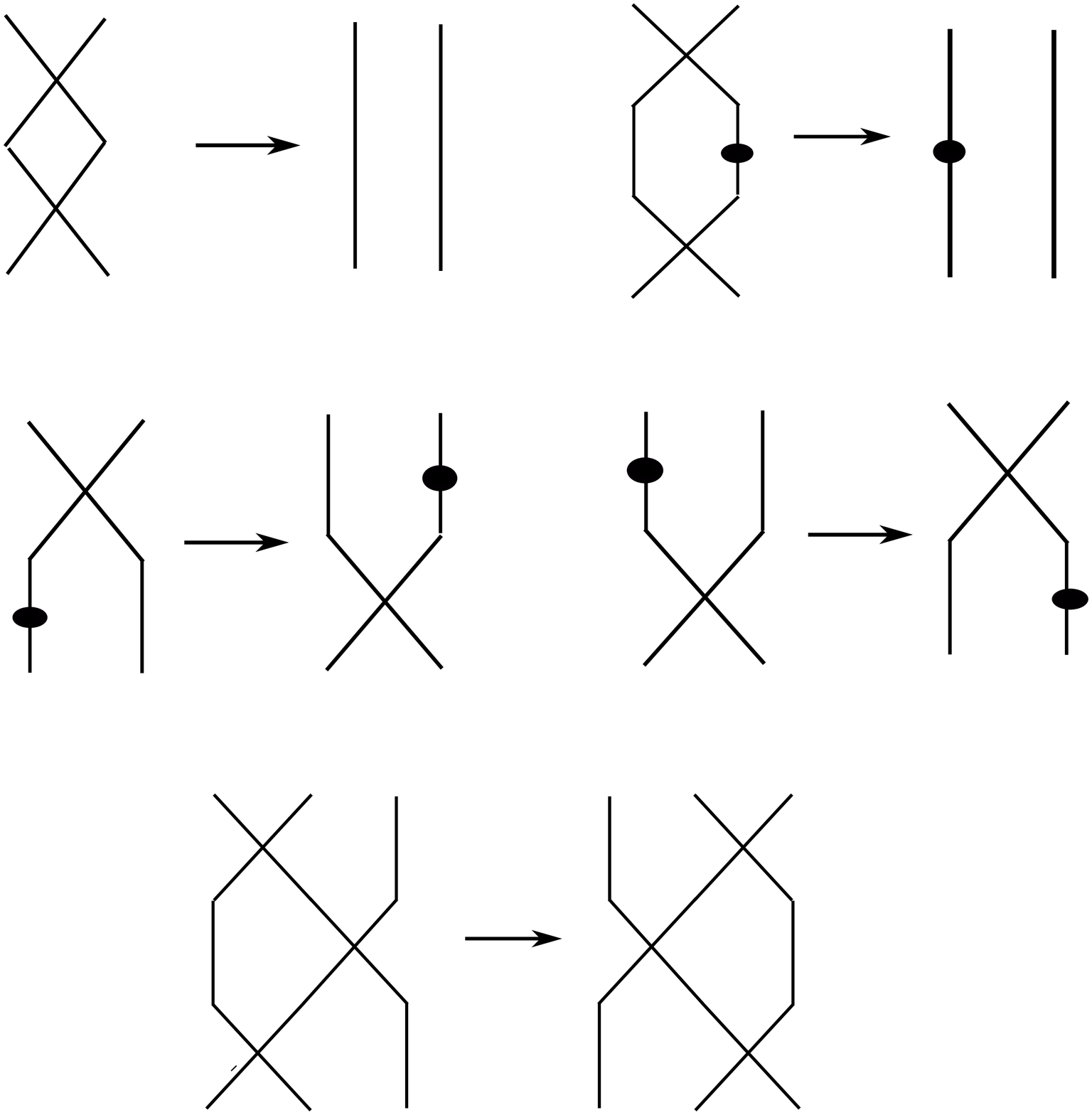}
\end{overpic}
\caption{Local operations in $D(\mf{x},\mf{y};\sigma)$ to obtain $can(\mf{x},\mf{y};\sigma)$.}
\label{can1}
\end{figure}
\n(2) Suppose that $f \in D(\mf{x},\mf{y};\sigma)$ is not equal to $can(\mf{x},\mf{y};\sigma)$ in $\Hom(\mf{x},\mf{y})$.
Consider a sequence of local operations which convert $f$ to $can(\mf{x},\mf{y};\sigma)$.
Then there exists at least one operation of Type (a) in the sequence.
Let $g$ be the diagram before performing the first operation of Type (a).
Firstly, $f=g \in \Hom(\mf{x},\mf{y})$ since all operations between $f$ and $g$ are of Type (b).
Secondly, $g=0 \in \Hom(\mf{x},\mf{y})$ since any diagram before performing each of the operations of Type (a) are zero by (L3-a) and (LD).
Hence, $f$ is not a nonzero diagram.
\end{proof}

A matching $\sigma$ is called {\em admissible} if $D(\mf{x},\mf{y};\sigma)$ is nonempty and there is at most one dot on each strand of any diagram in $D(\mf{x},\mf{y};\sigma)$.
Now we give a complete description of $\Hom(\mf{x},\mf{y};\sigma)$.

\begin{prop} \label{hom}
The morphism set
$$ \Hom(\mf{x},\mf{y};\sigma)= \left\{
\begin{array}{cl}
\F\{can(\mf{x},\mf{y};\sigma)\} & \mbox{if} \hspace{0.3cm} \sigma ~~\mbox{admissible}; \\
0 & \mbox{otherwise}.
\end{array}\right.
$$
\end{prop}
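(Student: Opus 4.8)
The plan is to prove the two cases of Proposition~\ref{hom} separately. For the ``otherwise'' direction we must show $\Hom(\mf{x},\mf{y};\sigma)=0$ whenever $\sigma$ is \emph{not} admissible. By definition, non-admissibility for a nonempty $D(\mf{x},\mf{y};\sigma)$ means every diagram in $D(\mf{x},\mf{y};\sigma)$ has some strand carrying at least two dots. I would argue that in this situation the canonical diagram $can(\mf{x},\mf{y};\sigma)$ itself must carry a double dot on some strand: by Lemma~\ref{unican} the dots in the canonical form all sit on the single ``local-maximum'' connected portion of each strand, so if the total dot count on a strand is $\geq 2$ (and this count is a homotopy invariant of the matching, determined purely by the discrepancy between the labels $x_i$ and $y_{\sigma(i)}$ along the strand, since each dot drops a label by $2$) then two dots actually coincide on that portion. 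Then (L2) forces $can(\mf{x},\mf{y};\sigma)=0$, and by Lemma~\ref{can}(2) every nonzero diagram in $D(\mf{x},\mf{y};\sigma)$ equals $can(\mf{x},\mf{y};\sigma)=0$, so $\Hom(\mf{x},\mf{y};\sigma)=0$. (The case $D(\mf{x},\mf{y};\sigma)=\es$ is trivial.)

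For the admissible case I must show $\Hom(\mf{x},\mf{y};\sigma)$ is \emph{one-dimensional}, spanned by $can(\mf{x},\mf{y};\sigma)$. Spanning is immediate from Lemma~\ref{can}(2): every diagram in $D(\mf{x},\mf{y};\sigma)$ is either zero or equal to $can(\mf{x},\mf{y};\sigma)$ in $\Hom$, and $\Hom(\mf{x},\mf{y};\sigma)$ is the quotient of the free $\F$-vector space on $D(\mf{x},\mf{y};\sigma)$ by the relations. So it remains to prove $can(\mf{x},\mf{y};\sigma)\neq 0$, i.e.\ that the relations $\cal{L}$ (together with the derived relations (LD)) do not collapse it. This is the nontrivial content. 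I would establish it by constructing an $\F$-linear functional, or more robustly a representation, that detects $can(\mf{x},\mf{y};\sigma)$ --- concretely, build an explicit $\F$-linear ``state-sum'' / evaluation $D(\mf{x},\mf{y})\to\F$ that is invariant under all of (L1)--(L5), sends $can(\mf{x},\mf{y};\sigma)$ (for admissible $\sigma$) to $1$, and kills every diagram with a double dot; invariance under (L3-a), (L3-b), (L4), (L5) is checked locally, and the double-dot annihilation gives (L2) and, via Lemma~\ref{LD}, also handles (LD). Since the canonical form has strictly maximal grading in its matching class by Lemma~\ref{can}(1), such an evaluation can be taken to live in the top graded piece, which simplifies the check. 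Alternatively --- and this is presumably how the paper proceeds --- one invokes the algebraic model $DGP(R)$ and the categorical action of Sections~5--6, whose existence is asserted to show ``$\cv$ does not collapse''; but since that machinery is built \emph{on top of} this proposition, the self-contained route is the state-sum argument.

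The main obstacle is exactly this non-collapsing (lower bound): showing $\dim_{\F}\Hom(\mf{x},\mf{y};\sigma)\geq 1$ for admissible $\sigma$. Everything else --- the decomposition over matchings, finiteness (Lemma~\ref{sigmafinite}), uniqueness and maximality of the canonical form (Lemmas~\ref{unican},~\ref{can}), and the reduction of the non-admissible case to a double dot --- is bookkeeping that the preceding lemmas already supply. The delicate point in the state-sum is getting invariance under the triple-intersection moves (L4) and the dot-slide (L5) simultaneously with the vanishing on double dots, since these interact through the ``stuck'' configurations (LD); I expect to define the evaluation by first isotoping an arbitrary diagram to a normal form (reading off $\sigma$ and pushing all dots to the top), assigning $1$ to the canonical configuration and $0$ to any configuration exhibiting a repeated dot or a $cr_{i,i}$ double crossing, and then verifying that each generator of the relation ideal preserves this assignment --- a finite, if tedious, local check.
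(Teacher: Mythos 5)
Your decomposition of the problem is right, and your treatment of the non-admissible case essentially matches the paper: by Lemma~\ref{unican} the dots in $can(\mf{x},\mf{y};\sigma)$ all sit on one connected local-max portion of each strand, so a strand with dot count $\geq 2$ forces a double dot and (L2) kills the canonical form; Lemma~\ref{can}(2) then kills everything in the matching class.

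The gap is in the admissible case, which you yourself identify as the hard part. You correctly see that the content is $can(\mf{x},\mf{y};\sigma)\neq 0$, but you do not prove it: you sketch a state-sum/evaluation functional and defer all the invariance checks, explicitly flagging (L4), (L5), and the stuck configurations (LD) as ``delicate.'' That is not a proof but a plan for one, and you have not carried it out. Moreover, you guess that the paper either uses such a state-sum or bootstraps from $DGP(R)$; it does neither, and the actual argument is considerably shorter and uses ingredients you dismissed as ``bookkeeping.'' The key observation is that the only zero-producing relations are (L2) and (L3-a), and every other local relation preserves \emph{both} the grading \emph{and} the number of dots on each strand. So if $can(\mf{x},\mf{y};\sigma)=0$, there is a chain of grading-preserving moves from $can$ to some diagram $f$ to which (L2) or (L3-a) applies. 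Admissibility plus dot-count preservation rules out (L2), so $f$ must contain a $cr_{i,i}$ double crossing; removing that double crossing produces $g\in D(\mf{x},\mf{y};\sigma)$ with $\op{gr}(g)=\op{gr}(f)+2=\op{gr}(can)+2$, contradicting the maximality of $\op{gr}(can)$ from Lemma~\ref{can}(1). That maximality statement is therefore not bookkeeping --- it is exactly the lower-bound mechanism you were looking for, and it makes the state-sum unnecessary.
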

\begin{proof}
Suppose $\sigma$ is not admissible.
Arguing by contradiction, suppose that $\Hom(\mf{x},\mf{y};\sigma) \neq 0$. Let $f \in D(\mf{x},\mf{y};\sigma)$ be a nonzero diagram.
Then $f=can(\mf{x},\mf{y};\sigma) \in \Hom(\mf{x},\mf{y};\sigma)$ by Lemma \ref{can}(2).
By the proof of Lemma \ref{unican}, all the dots on each strand in $can(\mf{x},\mf{y};\sigma)$ must lie on the rightmost portion of the strand which is connected.
Since $\sigma$ is not admissible, $can(\mf{x},\mf{y};\sigma)=0$ by the double dot relation which is a contradiction.

\vspace{.1cm}
Suppose $\sigma$ is admissible.
We first show that $can(\mf{x},\mf{y};\sigma)$ is a nonzero diagram.
Arguing by contradiction.
We can perform all relations of $\cv$ but the double dot relation (L2) or the double crossing of $cr_{i,i}$ (L3-a) on $can(\mf{x},\mf{y};\sigma)$ to obtain $f$  such that $f$ contains either a double dot or a double crossing of $cr_{i,i}$.
In particular, $\op{gr}(can(\mf{x},\mf{y};\sigma))=\op{gr}(f)$.
Since $\sigma$ is admissible and any relation except (L2) preserves the number of dots on each strand, $f$ cannot have double dots.
Hence $f$ contains a double crossing of $cr_{i,i}$.
Let $g$ denote the diagram obtained from $f$ by removing the double crossing.
Then $g$ is still in $D(\mf{x},\mf{y};\sigma)$, but $\op{gr}(g)=\op{gr}(f)+2=\op{gr}(can(\mf{x},\mf{y};\sigma))+2$.
This contradicts with $can(\mf{x},\mf{y};\sigma)$ having the maximal grading in $D(\mf{x},\mf{y};\sigma)$ by Lemma \ref{can}(1).
Hence $can(\mf{x},\mf{y};\sigma)$ is a nonzero diagram.

By Lemma \ref{can}(2), any nonzero diagram in $D(\mf{x},\mf{y};\sigma)$ is equivalent to $can(\mf{x},\mf{y};\sigma)$.
Therefore, $\Hom(\mf{x},\mf{y};\sigma)$ is a $1$-dimensional $\F$-vector space generated by $can(\mf{x},\mf{y};\sigma)$.
\end{proof}

The following corollary is an easy consequence of Lemma \ref{sigmafinite} and Proposition \ref{hom}.
\begin{cor}\label{homfinite}
Any morphism set $\Hom(\mf{x},\mf{y})$ is finite-dimensional.
\end{cor}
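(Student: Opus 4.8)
The statement to prove is Corollary \ref{homfinite}: any morphism set $\Hom(\mf{x},\mf{y})$ is finite-dimensional.

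\medskip

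The plan is to assemble this directly from the two structural results just established. First I would invoke the decomposition $\Hom(\mf{x},\mf{y}) = \bigoplus_{\sigma} \Hom(\mf{x},\mf{y};\sigma)$ indexed by matchings $\sigma \colon \Z_+ \to \Z_+$, which holds because every relation of $\cv$ preserves the matching underlying a diagram. So it suffices to bound both the number of nonzero summands and the dimension of each summand.

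\medskip

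Next I would apply Lemma \ref{sigmafinite}: for $\mf{x}, \mf{y} \in \E(\cv)_k$ there are only finitely many matchings $\sigma$ with $D(\mf{x},\mf{y};\sigma)$ nonempty, and clearly $\Hom(\mf{x},\mf{y};\sigma) = 0$ whenever $D(\mf{x},\mf{y};\sigma)$ is empty. If $\mf{x}$ and $\mf{y}$ have different charges, then $\Hom(\mf{x},\mf{y}) = 0$ by Lemma \ref{charge} and there is nothing to prove, so we may assume $\op{ch}(\mf{x}) = \op{ch}(\mf{y})$. Thus the direct sum is finite. Finally, for each such $\sigma$, Proposition \ref{hom} gives that $\Hom(\mf{x},\mf{y};\sigma)$ is either $0$ or the one-dimensional space $\F\{can(\mf{x},\mf{y};\sigma)\}$, depending on whether $\sigma$ is admissible. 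A finite direct sum of spaces each of dimension at most $1$ is finite-dimensional, which is exactly the claim.

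\medskip

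There is no real obstacle here — the corollary is a bookkeeping consequence of the hard work already done in Lemma \ref{sigmafinite} (finiteness of relevant matchings, via the stabilization $x_n = y_n = 2n+2k$ for $n \gg 0$) and Proposition \ref{hom} (the canonical-form analysis). The only point requiring the slightest care is to dispose of the unequal-charge case at the outset so that Lemma \ref{sigmafinite} applies; after that the argument is a one-line synthesis.
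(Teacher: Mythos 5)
Your proof is correct and matches the paper's reasoning exactly: the paper states the corollary follows from Lemma \ref{sigmafinite} and Proposition \ref{hom}, and you supply precisely that synthesis via the matching decomposition, together with the harmless observation that Lemma \ref{charge} handles the unequal-charge case. No issues.
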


\subsection{The DG category $\cv$}
We use the same enlargement from the elementary objects to one-sided twisted complexes as in Section 2.2.
Define the DG category $\cv$ as the category of one-sided twisted complexes of elementary objects in $\E(\cv)$.
Let $H(\cv)$ be its $0$th homology category which is a triangulated category.
We will show that $K_0(\cv)$ is isomorphic to $V$ in Section 5.

We give some isomorphisms between objects in $\E(\cv)$.
Let $\op{N}(\mf{x})$ denote a rearrangement of $\mf{x} \in \E(\cv)$ which is non-decreasing.
It is called the {\em normalization} of $\mf{x}$.
Let $\eta(\mf{x})=|\{(i,j)\in \Z^2 ~|~ i<j, ~x_i>x_j\}|$ denote the number of decreasing pairs in $\mf{x}$.

\begin{lemma} \label{normal}
Any elementary object $\mf{x}$ is isomorphic to its normalization in $\cv$ up to a grading shift:
$$\mf{x} \simeq \op{N}(\mf{x})[\eta(\mf{x})].$$
\end{lemma}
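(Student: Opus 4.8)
The plan is to induct on the number $\eta(\mf{x})=|\{(i,j) : i<j,\ x_i>x_j\}|$ of strict inversions of $\mf{x}$, which is finite because $x_n=x_{n-1}+2$ for $n\gg 0$. If $\eta(\mf{x})=0$ then $\mf{x}$ is non-decreasing, so $\mf{x}=\op{N}(\mf{x})$ and the statement holds trivially. If $\eta(\mf{x})>0$, then $\mf{x}$ is not non-decreasing and hence has an \emph{adjacent} strict inversion: an index $k$ with $x_k>x_{k+1}$. Write $\mf{x}=\mf{p}\cdot(b,a)\cdot\mf{q}$ with $b=x_k>a=x_{k+1}$, where $\mf{p}$ is a finite sequence and $\mf{q}$ is the remaining infinite tail, and let $\mf{x}'=\mf{p}\cdot(a,b)\cdot\mf{q}$ be obtained by transposing these two entries. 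A standard count gives $\eta(\mf{x}')=\eta(\mf{x})-1$, and clearly $\op{N}(\mf{x}')=\op{N}(\mf{x})$. So it is enough to prove the single-transposition isomorphism $\mf{x}\simeq\mf{x}'[1]$ in $H(\cv)$: the inductive hypothesis applied to $\mf{x}'$ then gives $\mf{x}\simeq\mf{x}'[1]\simeq\op{N}(\mf{x}')[\eta(\mf{x}')+1]=\op{N}(\mf{x})[\eta(\mf{x})]$.

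To establish $\mf{x}\simeq\mf{x}'[1]$ I would use the generating diagrams $cr_{a,b}(\mf{p},\mf{q})\in\Hom_{\cv}(\mf{x},\mf{x}')$ and $cr_{b,a}(\mf{p},\mf{q})\in\Hom_{\cv}(\mf{x}',\mf{x})$, and check that they are mutually inverse isomorphisms in $H(\cv)$ after the shift. Since $a$ and $b$ are distinct even integers, the double-crossing relation (L3-b) shows that the two vertical composites equal $id_{\mf{x}}$ and $id_{\mf{x}'}$. The differential on a crossing (Figure~\ref{v5}) vanishes on $cr_{a,b}$ and on $cr_{b,a}$ precisely because $a\neq b$ --- the only crossing with nonzero differential is $cr_{i,i}$, where $d(cr_{i,i})=id_{(i,i)}$ --- so both diagrams are $d$-closed and descend to $H(\cv)$. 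Finally, a crossing of distinct even labels has cohomological grading $\pm1$; here $a<b$, so $\op{gr}(cr_{a,b})=(-1)^{a-b}=1$, and by the shift conventions $\Hom^i(\mf{a},\mf{b}[m])\cong\Hom^{i+m}(\mf{a},\mf{b})$ the degree-$1$ closed morphism $cr_{a,b}(\mf{p},\mf{q})$ is precisely a degree-$0$ isomorphism $\mf{x}\to\mf{x}'[1]$ with inverse $cr_{b,a}(\mf{p},\mf{q})$ (of degree $\op{gr}(cr_{b,a})=-1$). This proves the claim, and with it the lemma.

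The argument is essentially bookkeeping, and the one delicate point is to get the shift right: it must be $+1$, not $-1$, at every step. This reduces to the observation that an adjacent strict inversion always crosses a larger even label over a smaller one, so the relevant crossing is always $cr_{a,b}$ with $a<b$ and grading $(-1)^{a-b}=+1$; no signs cancel. The only ingredient that depends on the explicit form of the differential, rather than on formal DG properties, is the vanishing $d(cr_{a,b})=0$ for $a\neq b$, and I expect reading this off from Figure~\ref{v5} to be the main thing to confirm; everything else is the elementary combinatorics of inversions together with (L3-b) and the shift conventions. As a consistency check, note that if $\mf{x}$ has a repeated entry then $\op{N}(\mf{x})$ contains an adjacent pair $(i,i)$, and $d(cr_{i,i})=id_{(i,i)}$ forces $(i,i)\simeq 0$ in $H(\cv)$ (exactly as in Lemma~\ref{K0cl}(1) for $\cl$), so both $\mf{x}$ and $\op{N}(\mf{x})$ are zero objects, in agreement with the lemma.
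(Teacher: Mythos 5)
Your proof is correct and takes essentially the same route as the paper's: induction on the inversion number $\eta(\mf{x})$, with the inductive step given by the crossings $cr_{a,b}$ and $cr_{b,a}$ across an adjacent decreasing pair, which are mutually inverse by the double-crossing relation (L3-b). You spell out a few details the paper leaves implicit --- that the two crossings are $d$-closed (so they genuinely descend to $H(\cv)$, reading $d(cr_{i,j})=0$ for $i\neq j$ off Figure~\ref{v5}) and the bookkeeping that gives $\op{gr}(cr_{a,b})=+1$ and hence the shift $[1]$ --- but the underlying argument is identical.
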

\begin{proof}
By induction on $\eta(\mf{x})$, it suffices to prove the lemma for $\eta(\mf{x})=1$.
When $\eta(\mf{x})=1$, there exists an adjacent decreasing pair $(i,i+1)$ with $x_i > x_{i+1}$.
The crossings $cr_{x_i,x_{i+1}}$ and $cr_{x_{i+1},x_{i}}$ between $\mf{x}$ and $\op{N}(\mf{x})[1]$ are isomorphisms by the double crossing relation (L3-b).
\end{proof}

Consider a collection of the non-decreasing sequences in $\cv$:
$$\cb=\bigsqcup_k\cb_k=\bigsqcup_k\{\mf{x} \in \E(\cv)_k ~|~ x_1 \leq x_2 \leq \cdots\}.$$
Let $\cv'$ be the DG category of one-sided twisted complexes of objects in $\cb$.
\begin{rmk} \label{vv'}
The categories $\cv$ and $\cv'$ are equivalent.
\end{rmk}

By the argument from Lemma \ref{K0cl}(1), any sequence with repetitive numbers is isomorphic to the zero object in the homology category $H(\cv)$.
Consider a collection of increasing sequences in $\cv$:
$$\hb=\bigsqcup_k\hb_k=\bigsqcup_k\{\mf{x} \in \E(\cv)_k ~|~ x_1 < x_2 < \cdots\}.$$
Note that $\hb$ coincides with the basis of the representation $V$ in Section 3.
We will build up a DG algebra $R$ from $\cb$ and its cohomology $H(R)$ from $\hb$ in Section 5.

\section{The algebraic formulation $DGP(R)$}
We give an equivalent description of $\cv$ in terms of projective DG $R$-modules, where $R$ is a DG algebra $R=\bigoplus_k R_k$:
$$R_k = \bigoplus_{\mf{x},\mf{y}\in \cb_k}\Hom_{\cv}(\mf{x},\mf{y}).$$
The multiplication and differential on $R$ are given by the composition and differential in $\cv$.
We will omit $\cv$ in $\Hom_{\cv}(\mf{x},\mf{y})$ from now on.

In Section 5.1, we show that the cohomology $H(R)$ is generated by crossingless diagrams between objects in $\hb$.
It can be represented as a quotient of a path algebra of a quiver whose vertex set is $\hb$.
In Section 5.2, we define a DG category $DGP(R)$ generated by some projective DG $R$-modules and show that $\cv$ is equivalent to $DGP(R)$.
We also show that the Grothendieck group of $H(DGP(R))$ is isomorphic to $V$.

\subsection{The DG algebra $R$}
We compute the cohomology $H(R)$ and show that $R$ is {\em formal}, i.e., $R$ and $H(R)$ are quasi-isomorphic.

An important observation is that $R$ is nonzero since the morphism set of $\cv$ is monotone with respect to the energy by Lemma \ref{energy}.
In particular, the infinite-dimensional algebra $R$ contains a family of mutually orthogonal idempotents
$e(\mf{x}):= id_{\mf{x}}$ for $\mf{x} \in \cb$.

Although the size of $R$ is very large, its cohomology $H(R)$ is much simpler.
We first look at cohomology of $\Hom(\mf{x},\mf{y})$ if $\mf{x}$ or $\mf{y}$ contains repetitive numbers.

\begin{lemma} \label{HR1}
$H(\Hom(\mf{x},\mf{y}))=0$ if either $\mf{x}$ or $\mf{y}$ in $\cb \bs \hb$.
\end{lemma}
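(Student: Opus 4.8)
The plan is to produce an explicit contracting homotopy for the cochain complex $\Hom(\mf{x},\mf{y})$, manufactured from a crossing of two equal-labelled strands. If $\op{ch}(\mf{x})\neq\op{ch}(\mf{y})$ the complex is already zero by Lemma \ref{charge}, so I may assume $\mf{x},\mf{y}\in\E(\cv)_k$. Suppose first that $\mf{x}\in\cb\bs\hb$. Being non-decreasing but not strictly increasing, $\mf{x}=(x_1,x_2,\dots)$ has an index $i$ with $x_i=x_{i+1}=2m$ for some $m$; write $\mf{x}=\mf{a}\cdot(2m,2m)\cdot\mf{x}'$ with $\mf{a}=(x_1,\dots,x_{i-1})$ and $\mf{x}'=(x_{i+2},x_{i+3},\dots)$.

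Set $c:=cr_{2m,2m}(\mf{a},\mf{x}')\in\Hom(\mf{x},\mf{x})$. Its cohomological degree is $\op{gr}(cr_{2m,2m})=-1$, and by the definition of the differential on generating crossings in Figure \ref{v5} (which acts on the crossing and fixes the trivial strands $id_{\mf{a}}$, $id_{\mf{x}'}$, exactly the analogue of $d(cr_{i,i})=id_{(i,i)}$ used in the proof of Lemma \ref{K0cl}(1)) we have $d(c)=id_{\mf{x}}$. Now define $h\colon\Hom^{\bullet}(\mf{x},\mf{y})\ra\Hom^{\bullet-1}(\mf{x},\mf{y})$ by $h(f)=f\circ c$. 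Since $d$ obeys Leibniz's rule with respect to vertical stacking and we work over $\F$, where signs disappear,
$$d(h(f))+h(d(f))=\bigl(d(f)\circ c+f\circ d(c)\bigr)+d(f)\circ c=f\circ id_{\mf{x}}=f,$$
so $h$ is a contracting homotopy and $H(\Hom(\mf{x},\mf{y}))=0$. The case $\mf{y}\in\cb\bs\hb$ is symmetric: choose $j$ with $y_j=y_{j+1}$, form the analogous crossing $c'\in\Hom(\mf{y},\mf{y})$ of degree $-1$ with $d(c')=id_{\mf{y}}$, and use $h'(f)=c'\circ f$; the same computation yields $d(h'(f))+h'(d(f))=id_{\mf{y}}\circ f=f$.

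I do not expect a genuine obstacle here: the statement is just the chain-level refinement of the observation in Lemma \ref{K0cl}(1) that a repeated pair $(i,i)$ is null-homotopic. The only points needing a little care are that the crossing $cr_{i,i}$ has degree $-1$ (so that $h$ really lowers degree by one and $c$ exhibits $id_{\mf{x}}$ as a coboundary), that the generating crossing placed among trivial strands still has differential the full identity $id_{\mf{x}}$ rather than a partial one, and the routine check that precomposition with $c$ (resp. postcomposition with $c'$) is a chain map — all immediate from the construction of the differential in Section 4.2.
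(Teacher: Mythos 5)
Your proof is correct and uses essentially the same key observation as the paper: a repeated label $x_i=x_{i+1}=a$ gives a crossing $c=cr_{a,a}(\mf{a},\mf{x}')$ with $d(c)=id_{\mf{x}}$, so $id_{\mf{x}}$ is a coboundary. The paper phrases this as $[id_{\mf{x}}]=0$ in cohomology and then applies the composition argument $[g]=[g][id_{\mf{x}}]=0$, whereas you unwind the same fact into an explicit contracting homotopy $h(f)=f\circ c$; these are logically equivalent, and your version is slightly more self-contained since it also spells out the symmetric case $\mf{y}\in\cb\bs\hb$.
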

\begin{proof}
Assume that $\mf{x} \in \cb \bs \hb$ such that $x_i=x_{i+1}=a$ for some $i$.
Let $f \in \Hom(\mf{x},\mf{x})$ be a diagram with a single crossing $cr_{a,a}$ so that $df=id_{\mf{x}}$.
Hence $[id_{\mf{x}}]=0 \in H(\Hom(\mf{x},\mf{y}))$.
Then for any class $[g] \in H(\Hom(\mf{x},\mf{y}))$, we have
$[g]=[g \circ id_{\mf{x}}]=[g][id_{\mf{x}}]=0$.
\end{proof}

As an $\F$-vector space, $H(R_k)$ is generated by morphisms between elements in $\hb$:
$$H(R_k)=\bigoplus_{\mf{x},\mf{y}\in \hb_k}H(\Hom(\mf{x},\mf{y})).$$
In order to obtain the algebra structure on $H(R_k)$, we want to find generators of $\Hom(\mf{x},\mf{y})$ whose cohomology is nonzero.
The following result is about the morphism set between two objects of the same energy.
\begin{lemma} \label{HR2}
For $\mf{x}, \mf{y} \in \hb_k$, if $\op{E}_k(\mf{x})=\op{E}_k(\mf{y})$, then
$$ \Hom(\mf{x},\mf{y})=H(\Hom(\mf{x},\mf{y}))= \left\{
\begin{array}{cl}
\F\{id_{\mf{x}}\} & \mbox{if} \hspace{0.3cm} \mf{x}=\mf{y}; \\
0 & \mbox{otherwise}.
\end{array}\right.
$$
\end{lemma}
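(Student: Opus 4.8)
The plan is to sharpen the energy monotonicity of Lemma~\ref{energy} to a rigidity statement: under the hypothesis $\op{E}_k(\mf{x})=\op{E}_k(\mf{y})$, every diagram in $D(\mf{x},\mf{y})$ must be dot-free. The key computation is that, among the generating diagrams, an identity $id_{\mf{x}}$ and a crossing $cr_{i,j}(\mf{a},\mf{x})$ leave the energy $\op{E}_k$ unchanged, while a dotted strand $dot_i(\mf{a},\mf{x})$ raises it by exactly $1$; these are short direct calculations from the definition of $\op{E}_k$. Writing an arbitrary $f\in D(\mf{x},\mf{y})$ as a vertical composition of generating diagrams and summing the contributions, the total change $\op{E}_k(\mf{y})-\op{E}_k(\mf{x})$ therefore equals the number of dots occurring in $f$. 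Hence $\op{E}_k(\mf{x})=\op{E}_k(\mf{y})$ forces any such $f$ to have no dots, so the label is constant along every strand of $f$; if $\sigma_f$ denotes the matching of $f$, then $x_i=y_{\sigma_f(i)}$ for all $i$, so $\mf{y}$ is a rearrangement of $\mf{x}$, and since both are strictly increasing we get $\mf{x}=\mf{y}$ and $\sigma_f=\op{id}$. In particular $D(\mf{x},\mf{y};\sigma)=\es$ unless $\mf{x}=\mf{y}$ and $\sigma=\op{id}$.

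With this in hand I would invoke Proposition~\ref{hom}. If $\mf{x}\neq\mf{y}$, then every $\Hom(\mf{x},\mf{y};\sigma)$ vanishes, so $\Hom(\mf{x},\mf{y})=0$. If $\mf{x}=\mf{y}$, only $\sigma=\op{id}$ can contribute; this matching is admissible, being realized by $id_{\mf{x}}$ and having only dot-free diagrams in $D(\mf{x},\mf{x};\op{id})$, so Proposition~\ref{hom} gives $\Hom(\mf{x},\mf{x})=\F\{can(\mf{x},\mf{x};\op{id})\}$; since $id_{\mf{x}}$ realizes the minimal crossing vector $(0,0,\dots)$, the uniqueness in Lemma~\ref{unican} forces $can(\mf{x},\mf{x};\op{id})=id_{\mf{x}}$, and so $\Hom(\mf{x},\mf{x})=\F\{id_{\mf{x}}\}$. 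For the cohomology statement there is nothing to prove when $\mf{x}\neq\mf{y}$, and when $\mf{x}=\mf{y}$ the complex $\Hom(\mf{x},\mf{x})$ is one-dimensional, so $d^2=0$ forces $d=0$, whence $H(\Hom(\mf{x},\mf{x}))=\F\{id_{\mf{x}}\}=\Hom(\mf{x},\mf{x})$.

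I expect the only genuinely delicate point to be the sharpening of Lemma~\ref{energy}: one must check carefully that crossings are energy-neutral, so that the energy difference $\op{E}_k(\mf{y})-\op{E}_k(\mf{x})$ is exactly the number of dots in the diagram, and that the defining sum for $\op{E}_k$ causes no trouble here, which it does not since every diagram is trivial outside finitely many strands. After that the argument is purely formal, using only Proposition~\ref{hom} and the uniqueness of canonical diagrams.
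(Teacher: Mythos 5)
Your proof is correct and follows essentially the same route as the paper: the key observation in both is that dotted strands raise the energy by exactly $1$ while identities and crossings are energy-neutral, so equal energy forces every diagram to be dot-free, whence the labels are constant along strands, $\mf{y}$ is a rearrangement of $\mf{x}$, and strict monotonicity forces $\mf{x}=\mf{y}$ and $\sigma=\op{id}$; Proposition~\ref{hom} then finishes the job. Your additional remarks identifying the canonical diagram as $id_{\mf{x}}$ and checking $d=0$ on the one-dimensional complex are straightforward but harmless elaborations (the cleanest way to see $d=0$ is simply that $d$ raises cohomological degree by $1$ while $\Hom(\mf{x},\mf{x})$ sits entirely in degree $0$).
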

\begin{proof}
Suppose that $D(\mf{x},\mf{y})$ is nonempty.
Any diagram in $D(\mf{x},\mf{y})$ contains no dots since energy increases by $1$ along a dotted strand as we go from bottom to top.
Hence $\mf{y}$ is a rearrangement of $\mf{x}$.
We have $\mf{x}=\mf{y}$ since they are both increasing sequences.
Then the only admissible matching is the trivial one $id$.
The lemma follows from Proposition \ref{hom}.
\end{proof}

Now we give a complete description of $\Hom(\mf{x},\mf{y})$ for $\mf{x}, \mf{y} \in \hb_k$.
Let $\cal{I}$ be a finite index set of positive integers which is possibly empty.
For $\mf{x}, \mf{y} \in \hb_k$, we write $\mf{x} \xra{\cal{I}} \mf{y}$ if $x_i=y_i+2$ for $i \in \cal{I}$, and $x_i=y_i$ otherwise.

\begin{lemma} \label{HR3}
For $\mf{x}, \mf{y} \in \hb_k$, we have
$$ \op{dim}_{\F}\Hom(\mf{x},\mf{y})= \left\{
\begin{array}{cl}
1 & \mbox{if} \hspace{0.3cm} \mf{x} \xra{\cal{I}} \mf{y} ~~\mbox{for some}~~ \cal{I}; \\
0 & \mbox{otherwise}.
\end{array}\right.
$$
\end{lemma}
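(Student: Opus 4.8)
The plan is to classify the admissible matchings $\sigma$ contributing to $\Hom(\mf{x},\mf{y})$ and then invoke Proposition \ref{hom}, which says $\Hom(\mf{x},\mf{y};\sigma) = \F\{can(\mf{x},\mf{y};\sigma)\}$ for admissible $\sigma$ and $0$ otherwise. First I would observe that since $\mf{x},\mf{y}\in\hb_k$ are strictly increasing, a strand from $x_i$ can only terminate at some $y_j$ with $y_j \le x_i$ and $y_j \equiv x_i \pmod 2$, and along the strand the label drops only via dotted strands, each lowering the label by exactly $2$. Using the normalization/sorting argument (as in Lemma \ref{normal}) together with Lemma \ref{energy}, any diagram $f\in D(\mf{x},\mf{y})$ rearranges $\mf{x}$ into $\mf{y}$ after decrementing certain entries by $2$; since both are strictly increasing, the rearrangement must be trivial, so the only matching that can occur is $\sigma = id$, and $f$ decrements exactly those entries $x_i$ with $x_i = y_i + 2$, i.e. $i\in\cal{I}$ for $\cal{I} = \{i ~|~ x_i = y_i+2\}$. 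If no such $\cal{I}$ exists — meaning $x_i \neq y_i$ and $x_i \neq y_i+2$ for some $i$ — then $D(\mf{x},\mf{y})$ is empty and $\Hom(\mf{x},\mf{y})=0$.

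Next I would handle the case $\mf{x}\xra{\cal{I}}\mf{y}$. Here the candidate diagram is the one consisting of a dotted strand on each position $i\in\cal{I}$ (carrying exactly one dot, lowering $x_i = y_i+2$ to $y_i$) and a trivial vertical strand on every other position; this is $can(\mf{x},\mf{y};id)$, since it has zero crossings (minimal for the order on $\N^{\times\Z_+}$) and no dots left of any crossing (vacuously). The key point to check is that the trivial matching $id$ is \emph{admissible} in the sense of the definition preceding Proposition \ref{hom}: $D(\mf{x},\mf{y};id)$ is nonempty (witnessed by the diagram just described), and no diagram in $D(\mf{x},\mf{y};id)$ can have two dots on a single strand. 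The latter follows because a strand from $x_i$ to $y_i$ carrying two dots would have to decrement the label by $4$, forcing $x_i = y_i + 4$, contradicting $\mf{x}\xra{\cal{I}}\mf{y}$; more carefully, any diagram in $D(\mf{x},\mf{y};id)$ has total dot-count equal to $\tfrac12\sum_i(x_i-y_i) = |\cal{I}|$ distributed among the $|\cal{I}|$ strands that must strictly decrease, so each such strand carries exactly one dot. Given admissibility, Proposition \ref{hom} gives $\Hom(\mf{x},\mf{y}) = \Hom(\mf{x},\mf{y};id) = \F\{can(\mf{x},\mf{y};id)\}$, which is $1$-dimensional.

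I expect the main obstacle to be the bookkeeping that pins down the matching to be trivial and forces the dot distribution — specifically, ruling out "nonobvious" diagrams in $D(\mf{x},\mf{y})$ that might wander with extra crossings and compensating dots. The cleanest way around this is to work invariantly: the charge is preserved (Lemma \ref{charge}), each dotted strand raises energy by exactly $1$ (Lemma \ref{energy} and its proof), crossings and identities preserve energy, so the number of dots in any $f\in D(\mf{x},\mf{y})$ equals $\op{E}_k(\mf{y})-\op{E}_k(\mf{x}) = |\cal{I}|$; combined with the monotonicity of labels along strands and strict monotonicity of $\mf{x},\mf{y}$, this forces both $\sigma_f = id$ and the one-dot-per-decreasing-strand distribution, hence admissibility. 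Everything else is a direct appeal to Proposition \ref{hom} and Lemma \ref{unican}.
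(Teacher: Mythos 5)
Your proposal is correct and follows essentially the same route as the paper: reduce to Proposition~\ref{hom} by showing that the trivial matching is the only one that can be admissible between strictly increasing sequences, and then read off the one-dimensionality of $\Hom(\mf{x},\mf{y};id)$. The only real difference is in the step that pins down $\sigma_f = id$: the paper argues locally and inductively (Claim~1 and Claim~2 show $x_1$ must connect to $y_1$ with at most one dot, then strips off the first pair and repeats), whereas you argue globally via the dot-count per strand $d_i = \tfrac12(x_i - y_{\sigma(i)})$ and the observation that an inversion in $\sigma$ would force $d_j - d_i \geq 2$, hence $d_j \geq 2$, on strictly increasing sequences. Both arguments hinge on the same two facts — labels are nonincreasing along strands and a strand with two dots is killed — so they are interchangeable; the paper's inductive version is marginally more self-contained since it does not need to invoke the energy count, while yours makes the quantity $|\cal{I}| = \op{E}_k(\mf{y}) - \op{E}_k(\mf{x})$ explicit, which is a cleaner way to see that the dot distribution is forced.
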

\begin{proof}
Suppose that $\Hom(\mf{x},\mf{y})\neq0$.
We prove that any nonzero diagram must connect $x_1$ to $y_1$.

\vspace{.1cm}
\n {\bf Claim 1}: $y_1 \leq x_1$.
Arguing by contradiction, suppose $y_1 > x_1$.
We have $y_k \geq y_1 > x_1$ for all $k$.
Then the strand from $x_1$ has no term to connect in $\mf{y}$ since a label does not increase along any strand as we go from bottom to top.

\vspace{.1cm}
\n {\bf Claim 2}: $y_1 \geq x_1-2$.
Arguing by contradiction, suppose $y_1 < x_1-2$.
We have $x_k \geq x_1 > y_1+2$ for all $k$.
Then the strand to $y_1$ contains at least two dots so that the diagram is zero by Proposition \ref{hom}.

\vspace{.1cm}
Hence we have either $y_1=x_1$ and a strand connecting them; or $y_1=x_1-2$ and a dotted strand connecting them.
We inductively prove that any nonzero diagram connects $x_k$ to $y_k$ for all $k$.
Moreover, there exists a finite index set $\cal{I}$ such that $\mf{x} \xra{\cal{I}} \mf{y}$.
In other words, the trivial matching $id$ is the only admissible one.
The lemma follows from Proposition \ref{hom}.
\end{proof}

\begin{rmk}
The connection between $\cv$ and the contact category $\Cv$ is better understood than that between $\cl$ and $\Ccl$ as in Section 2.4.
In particular, the analogue of Conjecture \ref{conjcl} for $\cv$ is true:
$$ \op{dim}_{\F}H^*(\Hom_{\cv}(\mf{x},\mf{y}))= \left\{
\begin{array}{cl}
1 & \mbox{if}~~~ \overline{\mf{y}} ~~\mbox{is stackable over}~~ \overline{\mf{x}}; \\
0 & \mbox{otherwise}.
\end{array}\right.
$$
Here $\overline{\mf{x}}$ and $\overline{\mf{y}}$ are dividing sets of $\Cv$ corresponding to $\mf{x}$ and $\mf{y}$ in $\hb_k$, respectively.
\end{rmk}

Let $r(\mf{x},\mf{y};i)$ denote the generator in $\Hom(\mf{x},\mf{y})$ if $\mf{x} \xra{\cal{I}} \mf{y}$ for $\cal{I}=\{i\}$.
It is represented by $can(\mf{x},\mf{y}; id)$, a crossingless diagram with one dot on the $i$th strand.
We have the following easy description of the cohomology.

\begin{prop} \label{HR}
The cohomology $H(R_k)$ is generated by $\{r(\mf{x},\mf{y};i) ~|~ \mf{x} \xra{\{i\}} \mf{y} ~\mbox{for}~ \mf{x},\mf{y} \in \hb_k\}$ with relations:
\begin{gather*}
r(\mf{x},\mf{y};i)\cdot r(\mf{y},\mf{z};i)=0;\\
r(\mf{x},\mf{y};i)\cdot r(\mf{y},\mf{w};j)=r(\mf{x},\mf{z};j) \cdot r(\mf{z},\mf{w};i), ~~\mbox{for}~~ i\neq j.
\end{gather*}
Moreover, the inclusion $H(R_k) \hookrightarrow R_k$ is a quasi-isomorphism.
\end{prop}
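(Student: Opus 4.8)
The plan is to compute $H(R_k)$ first as a graded $\F$-vector space, then read off its multiplication, and finally deduce formality.

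I would begin by identifying the underlying vector space. By Lemma \ref{HR1} only the summands $\Hom(\mf{x},\mf{y})$ with both $\mf{x},\mf{y}\in\hb_k$ survive in cohomology, so $H(R_k)=\bigoplus_{\mf{x},\mf{y}\in\hb_k}H(\Hom(\mf{x},\mf{y}))$; and by Lemma \ref{HR3} each such $\Hom(\mf{x},\mf{y})$ is one-dimensional exactly when $\mf{x}\xra{\cal{I}}\mf{y}$ for a unique finite $\cal{I}$, and is zero otherwise. Being one-dimensional it is concentrated in a single cohomological degree, so its internal differential vanishes and $H(\Hom(\mf{x},\mf{y}))=\Hom(\mf{x},\mf{y})$. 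Its generator, when $\mf{x}\xra{\cal{I}}\mf{y}$, is the crossingless canonical diagram $can(\mf{x},\mf{y};id)$ with one dot on the $i$-th strand for each $i\in\cal{I}$; write $r(\mf{x},\mf{y};\cal{I})$ for this element, so that $r(\mf{x},\mf{y};\{i\})=r(\mf{x},\mf{y};i)$. Thus the $r(\mf{x},\mf{y};\cal{I})$ form an $\F$-basis of $H(R_k)$.

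Next I would check that these basis elements are $d$-closed and verify the two relations. For a single dot, pushing the differential past the trivial strands to the left reduces $d(r(\mf{x},\mf{y};i))$, in the notation of Section 4.2, to $d\bigl(dot_{x_i-2}((\es),(x_{i+1},x_{i+2},\dots))\bigr)$; since $\mf{x}$ is strictly increasing, $x_k>x_i>x_i-2$ for $k>i$, so $\beta\bigl((x_{i+1},x_{i+2},\dots),\,x_i-2\bigr)=0$ and this vanishes by the definition of the differential. Ordering $\cal{I}=\{i_1<\cdots<i_m\}$ and decreasing the entries of $\mf{x}$ one position at a time in this order keeps all intermediate sequences strictly increasing (the needed inequalities follow from $\mf{x},\mf{y}\in\hb_k$ and the ordering of $\cal{I}$), so $r(\mf{x},\mf{y};\cal{I})$ is the ordered product of single-dot generators and is $d$-closed by Leibniz. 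The relation $r(\mf{x},\mf{y};i)\cdot r(\mf{y},\mf{z};i)=0$ holds because the product carries a double dot on the $i$-th strand, zero by (L2); and $r(\mf{x},\mf{y};i)\cdot r(\mf{y},\mf{w};j)=r(\mf{x},\mf{z};j)\cdot r(\mf{z},\mf{w};i)$ for $i\neq j$ holds because the two products are the same crossingless diagram up to sliding the dots on the disjoint $i$-th and $j$-th strands past each other, allowed by (L1-b).

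Finally I would prove these are the only relations and deduce formality. Let $A$ be the $\F$-algebra on generators $\rho(\mf{x},\mf{y};i)$ (over $\mf{x},\mf{y}\in\hb_k$ with $\mf{x}\xra{\{i\}}\mf{y}$) modulo the two relations; the previous step gives a surjection $A\twoheadrightarrow H(R_k)$, $\rho\mapsto r$, so it remains to exhibit a spanning set of $A$ of the right size. Using the second relation one bubble-sorts the indices of any monomial into increasing order; the crucial point --- and the step I expect to be the main obstacle --- is that interchanging an out-of-order adjacent pair $\rho(\,\cdot\,;i)\rho(\,\cdot\,;j)$ with $i>j$ always yields a valid product of generators, because the new intermediate sequence is again strictly increasing: the inequality $x_{j-1}<x_j-2$ it requires is forced by the common target lying in $\hb_k$, and this is exactly where $i>j$ is used (the opposite interchange can indeed produce a sequence outside $\hb_k$). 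After sorting, the first relation kills any monomial with a repeated index, and a sorted monomial is determined by its source $\mf{x}\in\hb_k$ and its index set $\cal{I}$, subject to all intermediate sequences --- equivalently, the endpoint $\mf{y}$ --- lying in $\hb_k$; this data is in bijection with the basis $\{r(\mf{x},\mf{y};\cal{I})\}$ of $H(R_k)$, so $A\twoheadrightarrow H(R_k)$ is an isomorphism. For formality, the $r(\mf{x},\mf{y};\cal{I})$ are genuine $d$-closed elements of $R_k$, linearly independent, closed under multiplication, with no nonzero combination a coboundary (each lies in a summand $\Hom(\mf{x},\mf{y})$ with $\mf{x},\mf{y}\in\hb_k$, concentrated in one degree, hence with no coboundaries); so the DG subalgebra they span is a copy of $H(R_k)$ with zero differential whose inclusion into $R_k$ is a quasi-isomorphism. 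The remaining verifications are routine given Lemmas \ref{HR1}--\ref{HR3}, Proposition \ref{hom}, and the explicit form of the differential.
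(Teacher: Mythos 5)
Your proof is correct and follows the same line as the paper's (crossingless diagrams with one dot per affected strand generate cohomology, they factor as products of single-dot generators $r(\mf{x},\mf{y};i)$, and the two relations come from the double-dot relation and from isotopy of disjoint dotted strands), but the paper's proof is a two-sentence sketch and you supply the details it omits. In particular, the bubble-sort argument for completeness of the relations --- together with your correct observation that only the interchange $i>j$ is guaranteed to keep the new intermediate sequence in $\hb_k$, while the opposite interchange can fail --- fills a genuine gap that the paper leaves tacit; and the formality clause, which the paper's proof does not address at all, is handled cleanly by your observation that the $r(\mf{x},\mf{y};\cal{I})$ span a DG subalgebra with zero differential whose elements each sit in a one-dimensional summand $\Hom(\mf{x},\mf{y})$ concentrated in degree $0$, hence are cocycles and never coboundaries.
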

\begin{proof}
Since any nonzero element in $H(\Hom(\mf{x},\mf{y}))$ is given by a crossingless diagram, it is a composition of $r(\mf{x},\mf{y};i)$'s.
The first relation comes from the double dot relation; the second one comes from the isotopy of two disjoint dotted strands.
\end{proof}

Let $\g_k$ be a quiver with vertex set $V(\g_k) = \hb_k$ and an arrow from $\mf{x}$ to $\mf{y}$ if $\mf{x} \xra{\{i\}} \mf{y}$.
Then $H(R_k)$ is a quotient of the path algebra $\F\g_k$ of $\g_k$.

The quiver $\g_k$ can also be understood in terms of partitions as follows.
Recall that $\es(k)=(2k+2,2k+4,\dots)$ is the vacuum object in $\hb_k$.
There is a one-to-one correspondence between $V(\g_k) = \hb_k$ and the set of all partitions $\op{Par}$:
$$\begin{array}{ccc}
\hb_k & \ra & \op{Par} \\
\mf{x} & \mapsto & \frac{1}{2}(\es(k)-\mf{x}).
\end{array}$$
There is an arrow from $\lambda$ to $\mu$ in $\g_k$ if $\lambda$ is a sub-partition of $\mu$ and $|\lambda|=|\mu|-1$.
The relations on arrows are:
\be
\item a composition of two arrows is zero if both arrows add one on the same entry of a partition;
\item any oriented square commutes.
\ee
A part of the quiver $\g_k$ is given as follows:
$$
\xymatrix{
            &           &                  &(3) \\
            &                    &    (2) \ar[dr]\ar[ur] & \\
(0)\ar[r]   & (1) \ar[ur]\ar[dr] \ar@/^2pc/[uurr]^{= 0}&                & (2,1) \\
            &                    &   (1,1) \ar[ur]\ar[dr]& \\
             &            &                & (1,1,1)
}$$

\subsection{The DG category $DGP(R)$}
We refer to \cite[Section 10]{BL} for an introduction to DG modules and {\em projective} DG modules, and to \cite{Ke} for an introduction to DG categories and their homology categories.
For the DG algebra $R$, let $DG(R)$ denote the DG category of DG left $R$-modules.
The cohomology $H(R)$ is viewed as a DG algebra with a trivial differential.
We are interested in full subcategories of $DG(R)$ and $DG(H(R))$ generated by some projective DG modules.
We refer to \cite{Tian1} for more detail about the construction of subcategories.

\begin{defn}
\be
\item Let $DGP(R)$ be the smallest full subcategory of $DG(R)$ which contains the projective DG $R$-modules $\{P(\mf{x})=R\cdot e(\mf{x}) ~|~ \mf{x} \in \cb\}$ and is closed under the cohomological grading shift functor $[1]$ and taking mapping cones.
\item Let $DGP(H(R))$ be the smallest full subcategory of $DG(H(R))$ which contains the projective DG $H(R)$-modules $\{PH(\mf{x})=H(R)\cdot e(\mf{x}) ~|~ \mf{x} \in \hb\}$ and is closed under the cohomological grading shift functor $[1]$ and taking mapping cones.
\ee
\end{defn}

Since $R$ and $H(R)$ are quasi-isomorphic by Proposition \ref{HR}, their $0$th homology categories $H(DG(R))$ and $H(DG(H(R)))$ are equivalent as triangulated categories.
By the same method in \cite[Lemma 2.29]{Tian1}, we have the following equivalence between their subcategories:

\begin{lemma} \label{equi}
The triangulated categories $H(DGP(R))$ and $H(DGP(H(R)))$ are equivalent.
Hence, their Grothendieck groups are isomorphic.
\end{lemma}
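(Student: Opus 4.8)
The plan is to reduce the statement to a general fact about DG categories: if two DG algebras $R$ and $R'$ are quasi-isomorphic, then the subcategories of their module categories generated (under shifts and mapping cones) by corresponding families of projective modules have equivalent $0$th homology categories. We already have the quasi-isomorphism $\iota: H(R) \hookrightarrow R$ from Proposition \ref{HR}, and under this map the idempotents $e(\mf{x})$ for $\mf{x} \in \hb$ on the $H(R)$-side correspond to the same idempotents on the $R$-side (viewed in $H(R) \subset R$). So $PH(\mf{x}) = H(R)\cdot e(\mf{x})$ and $P(\mf{x}) = R \cdot e(\mf{x})$ are matched up by $\iota$ for $\mf{x} \in \hb$. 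The one subtlety to flag is that $DGP(R)$ is built from $P(\mf{x})$ for \emph{all} $\mf{x} \in \cb$, whereas $DGP(H(R))$ uses only $\mf{x} \in \hb$; but by Lemma \ref{HR1} the projective $P(\mf{x})$ for $\mf{x} \in \cb \setminus \hb$ is acyclic (its endomorphism complex, indeed all $\Hom(\mf{x},-)$, has vanishing cohomology), hence isomorphic to the zero object in $H(DG(R))$, so adding these generators does not enlarge the homotopy category. Thus on the level of $0$th homology categories the two generating families really do correspond.

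First I would recall the restriction-of-scalars functor along $\iota$: since $\iota$ is a quasi-isomorphism of DG algebras, the induced functor $\iota^*: H(DG(R)) \to H(DG(H(R)))$ is an equivalence of triangulated categories (this is the standard derived invariance of DG modules under quasi-isomorphism — see \cite{Ke}, and it is the precise statement invoked in \cite[Lemma 2.29]{Tian1}). Second I would check that under $\iota^*$ the projective module $P(\mf{x}) = R\cdot e(\mf{x})$ goes to something isomorphic in $H(DG(H(R)))$ to $PH(\mf{x}) = H(R)\cdot e(\mf{x})$ for each $\mf{x} \in \hb$; this is immediate because restriction along $\iota$ sends $R\cdot e(\mf{x})$ to $R\cdot e(\mf{x})$ regarded as an $H(R)$-module, and the inclusion $H(R)\cdot e(\mf{x}) \hookrightarrow R\cdot e(\mf{x})$ is a quasi-isomorphism by Proposition \ref{HR}. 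Third, since an equivalence of triangulated categories carrying one generating set of compact objects to another (up to isomorphism) restricts to an equivalence of the thick subcategories they generate, and since both $DGP(R)$ and $DGP(H(R))$ are by definition the smallest such subcategories closed under $[1]$ and mapping cones, the functor $\iota^*$ restricts to an equivalence $H(DGP(R)) \xrightarrow{\sim} H(DGP(H(R)))$. The claim about Grothendieck groups then follows since equivalent triangulated categories have isomorphic $K_0$.

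The main obstacle, such as it is, is bookkeeping rather than mathematics: one must be careful that the subcategory $DGP(R)$ is defined using \emph{all} of $\cb$ as index set, and confirm via Lemma \ref{HR1} that the extra generators are acyclic and hence contribute nothing to $H(DGP(R))$, so that the matching of generating families under $\iota^*$ is genuinely a bijection up to isomorphism. Once that point is dispatched, everything else is the cited machinery from \cite{Ke} and the verbatim argument of \cite[Lemma 2.29]{Tian1}, which is why the proof in the paper can afford to be a one-line appeal to that lemma. I would therefore present the proof as: invoke \cite[Lemma 2.29]{Tian1} with $R$ and $H(R)$ in place of the algebras there, noting that the quasi-isomorphism is Proposition \ref{HR} and that the redundant generators $P(\mf{x})$, $\mf{x}\in\cb\setminus\hb$, are acyclic by Lemma \ref{HR1}; conclude equivalence of the homology categories and hence isomorphism of Grothendieck groups.
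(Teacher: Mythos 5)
Your proposal follows the same route as the paper: use the quasi-isomorphism $H(R)\hookrightarrow R$ from Proposition \ref{HR} together with the citation of \cite[Lemma 2.29]{Tian1} to transfer the equivalence to the subcategories generated by the matching families of projectives. The extra care you take to observe that the generators $P(\mf{x})$ for $\mf{x}\in\cb\setminus\hb$ are acyclic by Lemma \ref{HR1} (so the apparent mismatch in index sets between $DGP(R)$ and $DGP(H(R))$ is harmless) is a worthwhile point of precision that the paper leaves implicit, but the argument is otherwise the same.
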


Since $H(R)$ has a trivial differential, the $0$th homology category $H(DGP(H(R)))$ is just the homotopy category of bounded complexes of projective modules in $\{PH(\mf{x}) ~|~ \mf{x} \in \hb\}$.
We use the path algebra interpretation of $H(R_k)$ to compute the Grothendieck group of $H(DGP(H(R)))$, where $H(R)=\bigoplus_kH(R_k)$.

\begin{lemma} \label{k0}
The Grothendieck group of $H(DGP(H(R)))$ is isomorphic to $V=\Z \lan \hb \ran$.
\end{lemma}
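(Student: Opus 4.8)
The plan is to identify $H(DGP(H(R)))$ with a homotopy category of complexes of projectives over the path algebra $\bigoplus_k H(R_k)$ and then compute its Grothendieck group one charge at a time. Since $H(R)$ carries the trivial differential, Lemma~\ref{equi} reduces the problem to the homotopy category $K^b(\mathrm{proj}\, H(R))$ of bounded complexes of finitely generated projective $H(R)$-modules, and because $H(R)=\bigoplus_k H(R_k)$ decomposes along the charge, it suffices to show $K_0(K^b(\mathrm{proj}\, H(R_k)))\cong \Z\langle \hb_k\rangle$ for each $k$, and then take the direct sum over $k\in\Z$. So first I would record this reduction and fix a charge $k$.

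Next I would invoke the standard fact that for a finite-dimensional algebra $A$ over a field, $K_0(K^b(\mathrm{proj}\,A))$ is the free abelian group on the isomorphism classes of indecomposable projectives $\{[PH(\mf{x})]\}$. The subtlety here is that $H(R_k)$ is \emph{not} finite-dimensional—its vertex set $\hb_k$ is infinite—but by Proposition~\ref{HR} and Lemma~\ref{energy}, $H(R_k)$ is a locally finite graded (indeed energy-graded) algebra: each $e(\mf{y})H(R_k)e(\mf{x})$ is finite-dimensional and is nonzero only when $\op{E}_k(\mf{x})\le \op{E}_k(\mf{y})$, with equality forcing $\mf{x}=\mf{y}$ by Lemma~\ref{HR2}. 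Hence every object of $DGP(H(R))$, being built from finitely many $PH(\mf{x})[m]$ by mapping cones, involves only finitely many charges and, within a charge, only finitely many energies; the relevant truncations are honestly finite-dimensional. I would therefore argue that $\{[PH(\mf{x})] : \mf{x}\in \hb\}$ generates $K_0$ (clear, since these generate $DGP(H(R))$ under cones and shifts, and $[C(f)]=[\text{target}]-[\text{source}]$) and that they are $\Z$-linearly independent. For independence I would use the energy filtration: an alleged relation $\sum_{\mf{x}} n_{\mf{x}}[PH(\mf{x})]=0$ pulled back along the Euler form pairing with the simples $S(\mf{y})$ gives $n_{\mf{y}}=\dim\Hom^{\bullet}(PH(\mf{y}), S(\mf{y}))=1$ for each $\mf{y}$ (since $PH(\mf{y})$ has top $S(\mf{y})$ and the radical of $PH(\mf{y})$ only involves $S(\mf{x})$ with strictly larger energy), forcing all $n_{\mf{y}}=0$.

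Finally I would assemble the isomorphism explicitly: send $[PH(\mf{x})]\mapsto \mf{x}\in\hb\subset V$, extended $\Z$-linearly. By the previous paragraph this is a well-defined isomorphism of abelian groups, and since $\hb$ is precisely the distinguished $\Z$-basis of $V$ recorded in Section~3, we get $K_0(H(DGP(H(R))))\cong \Z\langle\hb\rangle = V$. Combined with Lemma~\ref{equi}, this gives the claim.

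The main obstacle I anticipate is handling the infinite-dimensionality of $H(R_k)$ carefully: the cited "$K_0$ of $K^b(\mathrm{proj})$ is free on indecomposable projectives" statement is usually quoted for finite-dimensional algebras, so I would need to make the energy-grading (equivalently, the poset structure on $\hb_k$ given by partition containment, which is directed with finite intervals) do the work of replacing finiteness—concretely, observing that $H(R_k)$ is a "locally bounded" or "semiperfect along the filtration" algebra so that projective covers and the pairing with simples behave as in the finite case. Everything else—the reduction via Lemma~\ref{equi}, the decomposition over charges, the bookkeeping with mapping cones—is routine.
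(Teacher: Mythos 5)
Your proof is correct in substance but takes a genuinely different route from the paper's. The paper shows freeness by confining any alleged relation to a finite truncation: any distinguished triangle involves only finitely many projectives, hence only finitely many energies, so it lives entirely in $H(DGP(H(R_k)^m))$ for the finite-dimensional subalgebra $H(R_k)^m$ cut out by an energy bound $m$; the freeness of $K_0$ for finite-dimensional path algebras (citing \cite{ASS}) then kills the relation. You instead prove independence directly by pairing with the simples $S(\mf{y})$ via the Euler form $\langle [X],[S(\mf{y})]\rangle=\sum_i(-1)^i\dim H^i\Hom^\bullet(X,S(\mf{y}))$, observing $\langle [PH(\mf{x})],[S(\mf{y})]\rangle=\delta_{\mf{x},\mf{y}}$ since $\Hom(PH(\mf{x}),S(\mf{y}))=e(\mf{x})S(\mf{y})$, and using the local finiteness and boundedness coming from the energy filtration to guarantee the pairing is well-defined on $K_0$. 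Both approaches lean on the same structural input (the energy grading making $H(R_k)$ locally finite with a unitriangular Cartan-type structure); yours is more self-contained in that it reproves the relevant freeness rather than quoting it, while the paper's is shorter because it delegates to a standard reference. Two small slips in your write-up that do not affect the logic: the sentence ``gives $n_{\mf{y}}=\dim\Hom^\bullet(PH(\mf{y}),S(\mf{y}))=1$'' is garbled --- you mean the pairing of $\sum n_{\mf{x}}[PH(\mf{x})]=0$ with $[S(\mf{y})]$ yields $n_{\mf{y}}\cdot 1=0$, hence $n_{\mf{y}}=0$; and the direction of the energy inequality in your description of the radical of $PH(\mf{y})$ is reversed (morphisms increase energy, and $PH(\mf{y})=H(R)e(\mf{y})$ collects morphisms ending at $\mf{y}$, so the composition factors $S(\mf{x})$ of its radical have $\op{E}(\mf{x})<\op{E}(\mf{y})$). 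Neither error matters since the Euler pairing argument only uses the orthogonality $\delta_{\mf{x},\mf{y}}$ and finiteness, not which direction the filtration runs.
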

\begin{proof}
It is obvious that the Grothendieck group is generated by the classes $\{[PH(\mf{x})] ~|~ \mf{x} \in \hb\}$.
It suffices to show that there are not other relations.
Suppose that there is a distinguished triangle $\Delta$ in $H(DGP(H(R_k)))$.
Then $\Delta$ only involves $\{PH(\mf{x})\}$ for finitely many $\mf{x} \in \hb$.
Let $m=\op{max}\{\op{E}_k(\mf{x}) ~|~ PH(\mf{x}) ~\mbox{in}~ \Delta\}$ denote the maximal energy in $\Delta$ which is a finite number.

Let $\hb_k^m=\{\mf{x} \in \hb_k, \op{E}(\mf{x})\leq m\}$ consist of objects with energy bounded above by $m$.
Consider a finite dimensional subalgebra $H(R_k)^m$ of $H(R_k)$ generated by morphisms between objects in $\hb_k^m$.
Similarly, consider a full sub-quiver $\g_k^m$ of $\g_k$ with its vertex set $V(\g_k^m)=\hb_k^m$.
Then the subalgebra $H(R_k)^m$ is a quotient of the path algebra $\F\g_k^m$ of a finite quiver $\g_k^m$.
Since $H(DGP(H(R_k)^m))$ is a full subcategory of $H(DGP(H(R_k)))$, the distinguished triangle $\Delta$ can be viewed in $H(DGP(H(R_k)^m))$.
From the classical result in representation theory of quivers \cite{ASS}, the Grothendieck group of $H(DGP(H(R_k)^m))$ is isomorphic to the free abelian group over $\hb_k^m$.
Hence, there is no more relation in the Grothendieck groups of $H(DGP(H(R_k)^m))$ as well as $H(DGP(H(R_k)))$.
\end{proof}

\begin{lemma} \label{dgpv}
The DG categories $\cv$ and $DGP(R)$ are equivalent. In particular, $K_0(\cv)\cong V$.
\end{lemma}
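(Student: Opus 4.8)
The plan is to use the reformulation $\cv \simeq \cv'$ from Remark \ref{vv'}, where $\cv'$ is the DG category of one-sided twisted complexes of objects in $\cb$, and then to exhibit a DG equivalence $\cv' \simeq DGP(R)$ by a Yoneda-type construction. Recall that $R=\bigoplus_k R_k$ is precisely the ``category algebra'' of the full DG subcategory of $\cv$ on the objects of $\cb$: it is a non-unital DG algebra carrying the complete system of mutually orthogonal idempotents $\{e(\mf{x})=id_{\mf{x}} ~|~ \mf{x}\in\cb\}$, and modules are understood in the locally unital sense. First I would define a DG functor $\Phi$ on the full DG subcategory $\cb\subset\cv'$ by $\Phi(\mf{x})=P(\mf{x})=R\cdot e(\mf{x})$, sending a morphism $f\in\Hom(\mf{x},\mf{y})$ to the induced morphism of projective DG $R$-modules. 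The key point is the tautological identification of complexes $\Hom_{DG(R)}(P(\mf{x}),P(\mf{y}))\cong e(\mf{x})\,R\,e(\mf{y})\cong\Hom_{\cv}(\mf{x},\mf{y})$, valid because $P(\mf{x})=R\,e(\mf{x})$ is projective and evaluation at $e(\mf{x})$ computes Hom out of it; here the usual $\op{op}$-convention distinguishing left and right modules is absorbed exactly as in \cite{Tian1}. Thus $\Phi$ is a fully faithful DG functor on elementary objects.

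Next I would extend $\Phi$ to one-sided twisted complexes. By the standard dictionary of \cite{BK}, a one-sided twisted complex $(\{\mf{x}^i[m_i]\},f=\sum_{i<j}f^j_i)$ in $\cv'$ is exactly a finite iterated mapping cone of the elementary objects $\mf{x}^i$ together with cohomological shifts; applying $\Phi$ termwise and transporting the twisting morphisms $f^j_i$ produces the corresponding finite iterated mapping cone of the $P(\mf{x}^i)[m_i]$ inside $DG(R)$. Since $DGP(R)$ is by definition the smallest full subcategory of $DG(R)$ containing the $P(\mf{x})$ for $\mf{x}\in\cb$ and closed under $[1]$ and mapping cones, the extended functor lands in $DGP(R)$ and, conversely, every object of $DGP(R)$ arises in this way, so $\Phi$ is essentially surjective. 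Fully faithfulness on twisted complexes is then formal: the Hom-complex $\Hom((\{\mf{x}^i[m_i]\},f),(\{\mf{y}^j[n_j]\},g))$ is assembled from the pieces $\Hom(\mf{x}^i[m_i],\mf{y}^j[n_j])$ with the differential $d_{cx}$, and the identical expression computes the Hom-complex between the corresponding iterated cones in $DGP(R)$; since $\Phi$ is an isomorphism on each piece, it is an isomorphism on the total complex. Hence $\Phi\colon\cv'\to DGP(R)$ is a DG equivalence, and composing with Remark \ref{vv'} gives the asserted equivalence $\cv\simeq DGP(R)$.

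For the statement about $K_0$: a DG equivalence induces an equivalence of $0$th homology categories, hence an isomorphism of Grothendieck groups, so $K_0(\cv)\cong K_0(DGP(R))$. By Lemma \ref{equi}, $K_0(DGP(R))\cong K_0(DGP(H(R)))$, and by Lemma \ref{k0} the latter is isomorphic to $V=\Z\lan\hb\ran$; chaining these gives $K_0(\cv)\cong V$. The main obstacle I anticipate is entirely bookkeeping rather than conceptual: making the Yoneda identification $\Hom_{DG(R)}(P(\mf{x}),P(\mf{y}))\cong\Hom_{\cv}(\mf{x},\mf{y})$ precise in the non-unital (locally unital) setting with consistent variance conventions, and checking carefully that ``one-sided twisted complex'' corresponds on the nose to ``finite iterated mapping cone'' so that essential surjectivity is genuinely exact. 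Both points are routine given the setup in \cite{BK} and \cite{Tian1}, and once $\Phi$ is known to be a full DG embedding on $\cb$ the rest is automatic.
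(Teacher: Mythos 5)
Your proposal is correct and follows essentially the same route as the paper: reduce to $\cv'$ via Remark \ref{vv'}, send $\mf{x}\mapsto P(\mf{x})=R\cdot e(\mf{x})$, identify $\Hom_{DG(R)}(P(\mf{x}),P(\mf{y}))$ with $\Hom_{\cv'}(\mf{x},\mf{y})$ via right multiplication (your Yoneda identification $e(\mf{x})Re(\mf{y})$ is exactly this), extend to one-sided twisted complexes using that $DGP(R)$ is the category of such complexes of the $P(\mf{x})$, and then chain Lemma \ref{equi} with Lemma \ref{k0} for the Grothendieck-group statement. The paper's proof is terser but contains the same ideas, so there is no substantive difference.
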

\begin{proof}
Recall that $\cv$ and $\cv'$ are equivalent by Remark \ref{vv'}.
We show that $\cv'$ is isomorphic to $DGP(R)$.
Note that $DGP(R)$ is actually the category of one-sided twisted complexes of projective modules in $\{P(\mf{x}) ~|~ \mf{x} \in \cb\}$ since taking mapping cones in $DGP(R)$ is the same as constructing one-sided twisted complexes.

A functor $\chi: \cv' \ra DGP(R)$ is defined on the elementary objects as:
\be
\item $\chi(\mf{x})=P(\mf{x})$ for $\mf{x} \in \cb$;
\item For $f \in \Hom_{\cv'}(\mf{x},\mf{y})$, $\chi(f)$ is the right multiplication with $f \in R: P(\mf{x}) \xra{\times f} P(\mf{y})$, where $P(\mf{x})$ and $P(\mf{y})$ are viewed as subspaces of $R$.
\ee
and extended to one-sided twisted complexes.
The functor $\chi$ gives $\Hom_{\cv'}(\mf{x},\mf{y}) \cong \Hom_{DGP(R)}(P(\mf{x}), P(\mf{y}))$.
Hence $\chi$ is an isomorphism of the categories.
\end{proof}

\section{The categorical action of $\cl$ on $DGP(R)$}
We construct a family of DG $R$-bimodules $T_i \in DG(R\ot R^{op})$ for $i \in \Z$.
We show that there is a functor $\tau: \cl \ra DG(R\ot R^{op})$ of monoidal DG categories satisfying:
\be
\item $\tau(\es)=R$ as an $R$-bimodule;
\item $\tau(i)=T_i$ for $i \in \Z$;
\item $\tau$ maps morphisms of $\cl$ to homomorphisms of bimodules;
\item $\tau$ maps the monoidal functor in $\cl$ to the functor of tensoring $R$-bimodules over $R$;
\item $\tau$ maps relations of $\cl$ to identities of homomorphisms;
\item $\tau$ maps differentials of morphisms to differentials of homomorphisms.
\ee
In Section 6.1, we give the definition of $T_i$.
In Section 6.2, we define the homomorphisms for the elementary diagrams in $\cl$ and show that some of the relations in $\cl$ are mapped to identities of homomorphisms.
Cases for other relations are proved in Section 6.3.
In Section 6.4, we show that tensoring with the bimodule $T_i$ maps projectives in $DGP(R)$ to $DGP(R)$.
Hence, the functor $\tau$ defines a categorical action $\cl \times DGP(R) \ra DGP(R)$.
The induced action on their homology categories descends to the linear action $Cl \times V \ra V$ on the level of Grothendieck groups.

\subsection{DG $R$-bimodules}
In this subsection we define the DG $R$-bimodules $T_i$ for $i \in \Z$.

Recall that $(2i)\cdot\mf{x}$ denotes the horizontal stacking of $(2i)$ and $\mf{x} \in \E(\cv)$.
It is easy to see that $(2i)\cdot\mf{x} \in \E(\cv)_{k-1}$ for $\mf{x} \in \E(\cv)_k$.
There is a map $$\imath_{i}: \Hom(\mf{x},\mf{x'}) \rightarrow \Hom((2i)\cdot\mf{x}, ~(2i)\cdot\mf{x'}),$$
given by adding a vertical strand with label $2i$ to the left.
Note that $\imath_{i}$ is an inclusion since canonical diagrams generate morphism sets by Proposition \ref{hom}.

\begin{defn}
For $i \in \Z$, let $\tau(2i), \tau(2i-1)$ be the DG $R$-bimodules:
\begin{gather*}
\tau(2i):=T_{2i}=\bigoplus_{k=-\infty}^{+\infty}T_{2i}(k)=\bigoplus_{k=-\infty}^{+\infty}\left(\bigoplus_{\mf{x}\in\cb_{k-1}, \mf{y}\in\cb_k}\Hom(\mf{x},~(2i)\cdot\mf{y})\right),\\
\tau(2i-1):=T_{2i-1}=\bigoplus_{k=-\infty}^{+\infty}T_{2i-1}(k)=\bigoplus_{k=-\infty}^{+\infty}\left(\bigoplus_{\mf{z}\in\cb_{k+1}, \mf{y}\in\cb_k}\Hom((2i)\cdot\mf{z},~\mf{y})\right),
\end{gather*}
where the $T_{2i}(k)$ are $(R_{k-1},R_k)$-bimodules and $T_{2i-1}(k)$ are $(R_{k+1},R_k)$-bimodules via the composition in $\cv$ and the inclusions $\imath_{i}$.
\end{defn}

\begin{rmk}
The definition of $\tau(2i)$ is motivated from a categorical action on $\cv$ which is given by horizontally stacking a vertical strand with label $2i$ to the left of morphisms of $\cv$.
See Lemma \ref{tensor} for the effect of tensoring with $\tau(2i)$ on $DGP(R)$.
Then $\tau(2i-1)$ is right adjoint to $\tau(2i)$ as endofunctors on $DGP(R)$.
\end{rmk}

\subsection{Homomorphisms of $R$-bimodules}
In this subsection we define homomorphisms $\tau(f)$ between the bimodules, where $f$ are the elementary diagrams in $\cl$.
The definitions of $\tau(f)$ will be given on direct summands of the bimodules according to the decomposition $R=\oplus_k R_k$.
The main tool to understand the bimodules is the calculation on canonical diagrams.

\subsubsection{Morphisms between $(\es)$, $(2i,2i-1)$ and $(2i-1,2i)$}
We will define $\tau(f)$ for $f$ in a set $\cal{ED}_1$ of some elementary diagrams
$$\cal{ED}_1=\{cr_{2i,2i-1}, ~cr_{2i-1,2i}, ~dap_{2i-1,2i}, ~cup_{2i-1,2i}, ~cap_{2i,2i-1}\}.$$

\vspace{.2cm}
\n $\bullet$ {\bf Notation.}
For objects in $\cb$, we fix the notation $\mf{x} \in \cb_{k-1}, \mf{y} \in \cb_{k}, \mf{z} \in \cb_{k+1}$.
Let $\ot_{k}=\ot_{R_k}$ for simplicity.
We use the notation $f*g$ for compositions in $\cv$ or $DG(R\ot R^{op})$:
$$\begin{array}{ccccc}
\Hom(\mf{x},\mf{y}) &\times &\Hom(\mf{y},\mf{z}) &\ra &\Hom(\mf{x},\mf{z}) \\
f & , & g & \mapsto & f * g
\end{array}
$$

\n $\bullet$ {\bf Preparation.}
The following lemma is motivated from the isomorphism in $\cl$: $(2i-1,2i) \simeq \{(2i,2i-1) \xra{cap_{2i,2i-1}} (\es)\}$ in Lemma \ref{K0cl}.
\begin{lemma} \label{12=21+es}
There is an isomorphism of $R_k$-bimodules:
$$T_{2i-1}(k-1) \ot_{k-1} T_{2i}(k) \cong T_{2i}(k+1) \ot_{k+1} T_{2i-1}(k)[1] ~\oplus~ R_k.$$
\end{lemma}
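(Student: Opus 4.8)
The plan is to produce explicit mutually inverse homomorphisms of DG $R_k$-bimodules, treating one $(\mf{w},\mf{y})$-component at a time for $\mf{w},\mf{y}\in\cb_k$ and using Proposition \ref{hom} to control every morphism space of $\cv$ that appears. First I would simplify the left hand side. By the tensor relation over $R_{k-1}$ together with the normalization isomorphisms of Lemma \ref{normal}, every simple tensor $\alpha\ot\beta$ with $\alpha\in\Hom((2i)\cdot\mf{w},\mf{x})$, $\beta\in\Hom(\mf{x},(2i)\cdot\mf{y})$, $\mf{x}\in\cb_{k-1}$, can be rewritten with $\alpha$ equal to the fixed normalization isomorphism $\nu\colon(2i)\cdot\mf{w}\to\op{N}((2i)\cdot\mf{w})$ (note $\op{N}((2i)\cdot\mf{w})\in\cb_{k-1}$, and that all transpositions occurring in $\nu$ are of distinct labels, so $\nu$ is an honest isomorphism). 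The composition map $\alpha\ot\beta\mapsto\beta\circ\alpha$ and the map $g\mapsto \nu\ot(g\circ\nu^{-1})$ are then mutually inverse, so the $(\mf{w},\mf{y})$-component of $T_{2i-1}(k-1)\ot_{k-1}T_{2i}(k)$ is identified, up to the grading shift $[\eta((2i)\cdot\mf{w})]$, with $\Hom_{\cv}((2i)\cdot\mf{w},(2i)\cdot\mf{y})$; in particular it is free on the canonical diagrams of Proposition \ref{hom}.

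Second, I would split $\Hom_{\cv}((2i)\cdot\mf{w},(2i)\cdot\mf{y})$ according to the matching value $\sigma_f(1)$ of the leftmost bottom endpoint, which carries the label $2i$. If $\sigma_f(1)=1$ then, since labels are non-increasing along strands and both ends of that strand are labeled $2i$, the leftmost strand carries no dot and in canonical form runs straight up, disjoint from everything else; this summand is exactly $\imath_i(\Hom_{\cv}(\mf{w},\mf{y}))$, which is the diagonal bimodule $R_k$ (using that $\imath_i$ is injective, as noted after its definition). If $\sigma_f(1)>1$ then both the bottom-left and the top-left $2i$-strands are bent around the left edge of the strip; cutting $f$ open along the left edge---that is, recording as an element $\mf{z}\in\cb_{k+1}$ the configuration of strands immediately to the right of the unique first-strip crossing made by the bent strand (Remark \ref{rmkcan}), and splitting $f$ there into a lower half in $\Hom(\mf{w},(2i)\cdot\mf{z})$ and an upper half in $\Hom((2i)\cdot\mf{z},\mf{y})$---identifies this summand with the $(\mf{w},\mf{y})$-component of $T_{2i}(k+1)\ot_{k+1}T_{2i-1}(k)$, the grading shift $[1]$ being accounted for by exactly the first-strip crossing (the left-edge turnaround) that is removed by the cut. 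Assembling the two identifications over all $(\mf{w},\mf{y})$ gives the claimed decomposition of graded $R_k$-bimodules; equivariance for the two $R_k$-actions then reduces to the compatibility of $\imath_i$ and of the normalization isomorphisms with composition, which is formal.

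The main obstacle is compatibility with the differential. In $\cv$ the differential on a dotted strand is only defined recursively (Section 4.2), and---crucially---$d$ need not preserve the matching of a diagram, since $d(cr_{2i,2i})=id$ uncrosses two equal strands; hence when $w_1=2i$ the two pieces above are \emph{a priori} mixed by $d$ inside $\Hom_{\cv}((2i)\cdot\mf{w},(2i)\cdot\mf{y})$. The heart of the argument is to check that, under the identifications of the previous paragraph, the off-diagonal component of $d$ from the shifted summand $T_{2i}(k+1)\ot_{k+1}T_{2i-1}(k)[1]$ into the $R_k$ summand vanishes---this is precisely where the grading shift $[1]$ is used---so that the two maps constructed above are genuine chain maps. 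I expect this verification to reduce, by induction on $\beta((2i)\cdot\mf{w},2i)$ and by passing the differential through trivial left strands as in Section 4.2, to a finite check on the elementary diagrams entirely parallel to the proof that $d^2=0$ there. Once the differential is under control, $\Phi\circ\Psi=\mathrm{id}$ and $\Psi\circ\Phi=\mathrm{id}$ are immediate from Proposition \ref{hom}, since both composites fix each canonical-diagram basis vector.
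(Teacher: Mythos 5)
Your first two paragraphs reproduce the paper's decomposition essentially verbatim: collapse the tensor product to $\bigoplus_{\mf{y},\mf{y'}\in\cb_k}\Hom((2i)\cdot\mf{y},(2i)\cdot\mf{y'})$, then split canonical diagrams according to whether the first strip carries a crossing (Remark \ref{rmkcan}), giving the diagonal bimodule $R_k$ from the straight-strand diagrams and the shifted tensor product $T_{2i}(k+1)\ot_{k+1}T_{2i-1}(k)[1]$ from the single-crossing diagrams, the shift $[1]$ coming from $\op{gr}(cr_{2i,2i})=-1$. A small slip: the grading shift $[\eta((2i)\cdot\mf{w})]$ you mention for the tensor collapse is spurious --- the composition map $\alpha\ot\beta\mapsto\alpha*\beta$ is itself degree-preserving, and in your own formulation the gradings of $\nu$ and $\nu^{-1}$ cancel; this does not propagate, but it suggests you should double-check your bookkeeping there.

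The real problem is the last paragraph. You write that ``the heart of the argument'' is to show that the off-diagonal component of $d$ from $T_{2i}(k+1)\ot_{k+1}T_{2i-1}(k)[1]$ into $R_k$ vanishes, so that the two maps are chain maps. That verification is neither required nor possible. The lemma claims an isomorphism of $R_k$-\emph{bimodules}, not of DG bimodules: the decomposition is not assumed to respect $d$. Indeed the paper explicitly flags, in the remark immediately following the proof, that ``this is not a direct sum of DG bimodules since there is a nontrivial differential between the two summands.'' The extra differential between the two pieces is a feature of the construction, not a defect to be argued away; it is precisely what later realizes the distinguished triangle $(e+1,e)\simeq\{(\es)\to(e,e+1)\}$ of Lemma \ref{K0cl}(2) on the module side, and it shows up as the map $\tau(cap_{2i,2i-1}*cup_{2i-1,2i})$ in Lemma \ref{relation12}. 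So the part of your proposal you identify as the crux is a step that would fail if you tried to carry it out, and its absence is not a gap --- you have simply misread the strength of the statement. Once you drop that paragraph, what remains is correct and matches the paper's argument.
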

\begin{proof}
By definition,
\begin{align*}
& T_{2i-1}(k-1) \ot_{k-1} T_{2i}(k) \\
= & \left(\bigoplus_{\mf{y}\in\cb_{k}, \mf{x}\in\cb_{k-1}}\Hom((2i)\cdot\mf{y},~\mf{x})\right) \ot_{k-1} \left(\bigoplus_{\mf{x'}\in\cb_{k-1}, \mf{y'}\in\cb_{k}}\Hom(\mf{x'},~(2i)\cdot\mf{y'})\right) \\
= & \left(\bigoplus_{\mf{y}\in\cb_{k}, \mf{y'}\in\cb_{k}}\Hom((2i)\cdot\mf{y},~(2i)\cdot\mf{y'})\right),
\end{align*}
which is generated by all canonical diagrams from $(2i)\cdot\mf{y}$ to $(2i)\cdot\mf{y'}$.
Similarly,
\begin{align*}
& T_{2i}(k+1) \ot_{k+1} T_{2i-1}(k) \\
= & \left(\bigoplus_{\mf{y}\in\cb_{k}, \mf{z}\in\cb_{k+1}}\Hom(\mf{y},~(2i)\cdot\mf{z})\right) \ot_{k+1} \left(\bigoplus_{\mf{z'}\in\cb_{k+1}, \mf{y'}\in\cb_{k}}\Hom((2i)\cdot\mf{z'},~\mf{y'})\right).
\end{align*}

By Remark \ref{rmkcan} the set $D$ of canonical diagrams in $\Hom((2i)\cdot\mf{y}, ~(2i)\cdot\mf{y'})$ is a disjoint union of two subsets as shown in Figure \ref{c1}:
$$\begin{array}{rl}
 D_1=&\{ ~\mbox{canonical diagrams without crossing in the first strip}~ \} \\
 =&\{id_{2i}\cdot f ~~|~~ \mbox{canonical}~~ f \in \Hom(\mf{y},\mf{y'})\}, \\
D_2=&\{ ~\mbox{canonical diagrams with one crossing in the first strip}~\} \\
=&\left\{(id_{2i}\cdot g)* (cr_{2i,2i}\cdot id_{\mf{z}}) * (id_{2i}\cdot h) ~~\left|~~ \begin{array}{c} \mbox{canonical}~~g \in \Hom(\mf{y},(2i)\cdot\mf{z}),\\ ~h \in \Hom((2i)\cdot\mf{z},\mf{y'}),~~\mbox{for all}~~\mf{z}\end{array} \right.\right\}/\sim,
\end{array}
$$
where ``$\sim$" is modulo the relation of transporting diagrams along $\mf{z}$.

\begin{figure}[h]
\begin{overpic}
[scale=0.2]{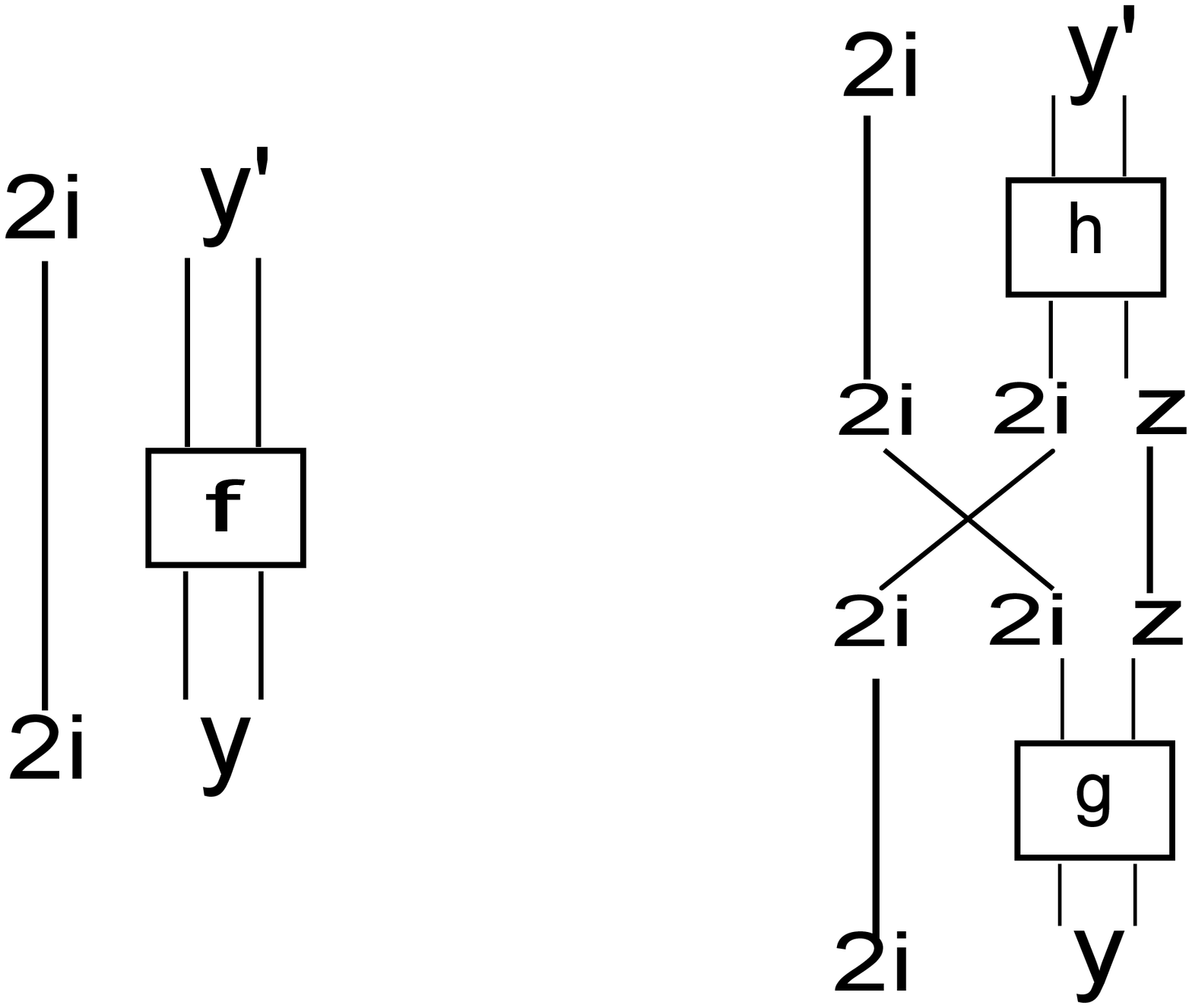}
\end{overpic}
\caption{Two types of canonical diagrams from $(2i)\cdot\mf{y}$ to $(2i)\cdot\mf{y'}$.}
\label{c1}
\end{figure}
It is easy to see that the sub-bimodule generated by $D_1$ is isomorphic to the algebra $R_k$ as $R_k$-bimodules.
On the other hand, the sub-bimodule generated by $D_2$ is isomorphic to $T_{2i}(k+1) \ot_{k+1} T_{2i-1}(k)[1]$ by removing the crossing $cr_{2i,2i}\cdot id_{\mf{z}}$.
The disjoint union $D=D_1 \sqcup D_2$ gives the corresponding decomposition of $T_{2i-1}(k-1) \ot_{k-1} T_{2i}(k)$.
\end{proof}

\begin{rmk}
This is not a direct sum of DG bimodules since there is a nontrivial differential between the two summands.
\end{rmk}

\n $\bullet$ {\bf Definition of $\tau(f)$, Part I.}
For $f \in \cal{ED}_1 \backslash \{cap_{2i,2i-1}\}$, $\tau(f)$ is one of the projections or inclusions with respect to the isomorphism in Lemma \ref{12=21+es}.
We use the notation in $D=D_1 \sqcup D_2$ as the generators of the bimodules.

\n(1) The homomorphism for $cr_{2i,2i-1} \in \Hom((2i-1,2i),(2i,2i-1))$ is a projection:
$$\begin{array}{cccc}
\tau(cr_{2i,2i-1})=\op{pr}_2: & T_{2i-1}(k-1) \ot_{k-1} T_{2i}(k) & \ra & T_{2i}(k+1) \ot_{k+1} T_{2i-1}(k) \\
& id_{2i}\cdot f & \mapsto & 0 \\
& (id_{2i}\cdot g)* (cr_{2i,2i}\cdot id_{\mf{z}}) * (id_{2i}\cdot h) & \mapsto & g \ot h.
\end{array}
$$

\vspace{.1cm}
\n(2) The homomorphism for $cr_{2i-1,2i} \in \Hom((2i,2i-1),(2i-1,2i))$ is an inclusion:
$$\begin{array}{cccc}
\tau(cr_{2i-1,2i})=\op{\imath}_2: & T_{2i}(k+1) \ot_{k+1} T_{2i-1}(k) & \ra & T_{2i-1}(k-1) \ot_{k-1} T_{2i}(k) \\
& g \ot h & \mapsto & (id_{2i}\cdot g)* (cr_{2i,2i}\cdot id_{\mf{z}}) * (id_{2i}\cdot h).
\end{array}
$$

\vspace{.1cm}
\n(3) The homomorphism for $dap_{2i-1,2i} \in \Hom((2i-1,2i),(\es))$ is a projection:
$$\begin{array}{cccc}
\tau(dap_{2i-1,2i})=\op{pr}_1: & T_{2i-1}(k-1) \ot_{k-1} T_{2i}(k) & \ra & R_k \\
& id_{2i}\cdot f & \mapsto & f \\
& (id_{2i}\cdot g)* (cr_{2i,2i}\cdot id_{\mf{z}}) * (id_{2i}\cdot h) & \mapsto & 0.
\end{array}
$$

\vspace{.1cm}
\n(4) The homomorphism for $cup_{2i-1,2i} \in \Hom((\es),(2i-1,2i))$ is an inclusion:
$$\begin{array}{cccc}
\tau(cup_{2i-1,2i})=\op{\imath}_1: & R_k & \ra & T_{2i-1}(k-1) \ot_{k-1} T_{2i}(k) \\
& f & \mapsto & id_{2i}\cdot f.
\end{array}
$$

\vspace{.1cm}
\n(5) The homomorphism for $cap_{2i,2i-1} \in \Hom((2i,2i-1),(\es))$ is a multiplication:
$$\begin{array}{cccc}
\tau(cap_{2i,2i-1}): & T_{2i}(k+1) \ot_{k+1} T_{2i-1}(k) & \ra & R_k \\
& g \ot h & \mapsto & g * h .
\end{array}
$$

\vspace{.2cm}
The definitions (1)-(4) are summarized in Figure \ref{c2}.
\begin{figure}[h]
\begin{overpic}
[scale=0.25]{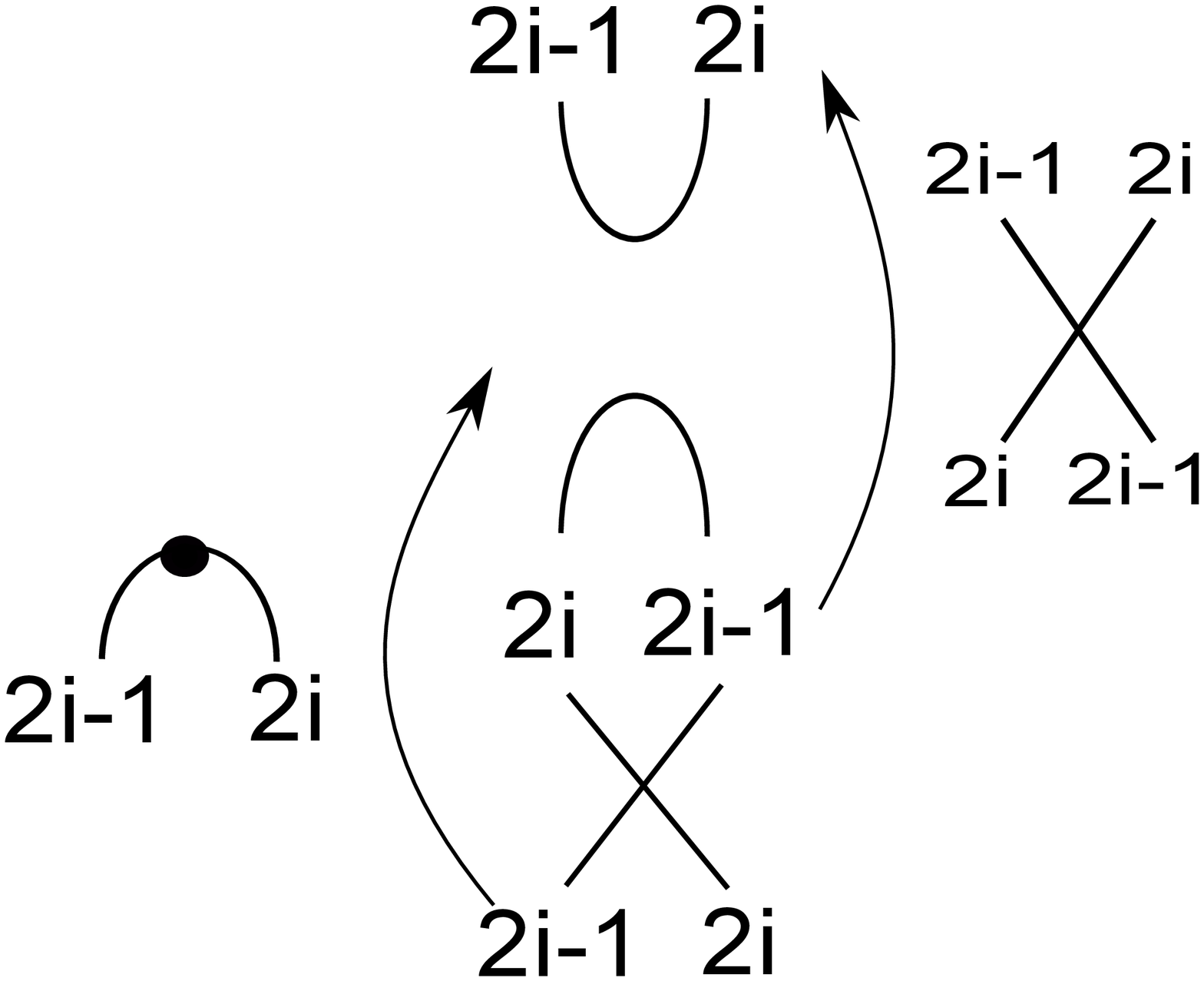}
\put(65,15){$=\op{pr}_2$}
\put(100,52){$=\op{\imath}_2$}
\put(50,52){$\es$}
\put(-15,30){$\op{pr}_1=$}
\put(30,65){$\op{\imath}_1=$}
\end{overpic}
\caption{The projections and inclusions, where the bimodule corresponding to $(2i-1,2i)$ is the sum of the two bimodules corresponding to $(2i,2i-1)$ and $(\es)$.}
\label{c2}
\end{figure}

\begin{lemma} \label{relation12}
The functor $\tau$ maps (R3), (R4-a,b) for $f \in \cal{ED}_1$ to identities, and maps differentials of $f \in \cal{ED}_1$ to differentials of the homomorphisms.
\end{lemma}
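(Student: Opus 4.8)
The plan is to use that, by construction, the homomorphisms $\op{pr}_1,\op{pr}_2,\op{\imath}_1,\op{\imath}_2$ attached to $\cal{ED}_1\bs\{cap_{2i,2i-1}\}$ are precisely the structure maps of the direct sum decomposition of Lemma \ref{12=21+es}, while $cap_{2i,2i-1}$ is sent to the off-diagonal part of the (non-split) differential on the tensor product bimodule. Granting this, the relevant relations of $\cl$ become tautologies about projections and inclusions of a direct sum, and the relevant differentials become the standard behaviour of the structure maps of a mapping cone. No signs intervene since $\F=\mathbb{F}_2$.

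\textbf{Relations.} For a fixed $i$, the instances of (R3), (R4-a,b) assembled solely from $\cal{ED}_1$-diagrams are exactly three --- (R4-c,d) would require labels $cr_{l,l}$ or $|l-m|>1$, which do not occur. They are: the double crossing $cr_{2i,2i-1}\circ cr_{2i-1,2i}$ on $(2i,2i-1)$, equal to $id_{(2i,2i-1)}$ by (R4-a); the double crossing $cr_{2i-1,2i}\circ cr_{2i,2i-1}$ on $(2i-1,2i)$, equal by (R4-b) to $id_{(2i-1,2i)}+cup_{2i-1,2i}\circ dap_{2i-1,2i}$; and the dotted loop $dap_{2i-1,2i}\circ cup_{2i-1,2i}=id_{(\es)}$ of (R3). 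Applying $\tau$ and reading off Figure \ref{c2}, these become $\op{pr}_2\circ\op{\imath}_2=\op{id}$, $\op{\imath}_2\circ\op{pr}_2=\op{id}+\op{\imath}_1\circ\op{pr}_1$ and $\op{pr}_1\circ\op{\imath}_1=\op{id}$, all of which hold because $\op{pr}_1\oplus\op{pr}_2$ and $\op{\imath}_1\oplus\op{\imath}_2$ realise the decomposition of Lemma \ref{12=21+es}, so that $\op{pr}_a\circ\op{\imath}_b=\delta_{ab}\op{id}$ and $\op{\imath}_1\circ\op{pr}_1+\op{\imath}_2\circ\op{pr}_2=\op{id}$.

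\textbf{Differentials.} Since $\F=\mathbb{F}_2$ the tensor differential on $T_{2i-1}(k-1)\ot_{k-1}T_{2i}(k)$ is $d(a\ot b)=da\ot b+a\ot db$; I would evaluate it on the generators $D_1\sqcup D_2$ of Lemma \ref{12=21+es}. On a $D_1$-generator one has $d(id_{2i}\cdot f)=id_{2i}\cdot d(f)$ (the differential passes through the left strand), still a $D_1$-element. On a $D_2$-generator $(id_{2i}\cdot g)*(cr_{2i,2i}\cdot id_{\mf z})*(id_{2i}\cdot h)$, Leibniz's rule and $d(cr_{2i,2i})=id_{(2i,2i)}$ (Case 1 of the differential; compare Lemma \ref{K0cl}(1)) give the diagonal terms $d(g)\ot h+g\ot d(h)$ together with the $D_1$-term $id_{2i}\cdot(g*h)$. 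Hence, in block form with respect to $T_{2i}(k+1)\ot_{k+1}T_{2i-1}(k)[1]\oplus R_k$, the differential is lower triangular with off-diagonal entry $g\ot h\mapsto g*h=\tau(cap_{2i,2i-1})(g\ot h)$, so $T_{2i-1}(k-1)\ot_{k-1}T_{2i}(k)$ is the mapping cone of $\tau(cap_{2i,2i-1})$ --- exactly the image under $\tau$ of the complex $\{(2i,2i-1)\xra{cap_{2i,2i-1}}(\es)\}$ representing $(2i-1,2i)$ in Lemma \ref{K0cl}(3). Now $cup_{2i-1,2i}$, $cap_{2i,2i-1}$ and, by the sign convention of Figure \ref{6}, one adjacent crossing (say $cr_{2i,2i-1}$) are $d$-closed, and $\op{\imath}_1$, $\tau(cap_{2i,2i-1})$, $\op{pr}_2$ are respectively the $R_k$-inclusion, the connecting map and the projection onto the quotient of this cone --- all genuine chain maps --- so $\tau(df)=0=d(\tau(f))$ for these. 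For the remaining $cr_{2i-1,2i}$ and $dap_{2i-1,2i}$, with $\cl$-differentials $d(cr_{2i-1,2i})=cup_{2i-1,2i}\circ cap_{2i,2i-1}$ and $d(dap_{2i-1,2i})=cap_{2i,2i-1}\circ cr_{2i,2i-1}$ (read from Figure \ref{6}), one computes directly from the block differential that $d(\op{\imath}_2)=\op{\imath}_1\circ\tau(cap_{2i,2i-1})=\tau(d(cr_{2i-1,2i}))$ and $d(\op{pr}_1)=\tau(cap_{2i,2i-1})\circ\op{pr}_2=\tau(d(dap_{2i-1,2i}))$.

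\textbf{Main obstacle.} The only real work is bookkeeping the cohomological shift $[1]$: matching the structure maps of the cone against the $\tau$-images of $cr_{2i,2i-1}$ and $cr_{2i-1,2i}$, which carry gradings $+1$ and $-1$, and correspondingly translating the prescribed $\cl$-differentials of the two adjacent crossings and of the dotted cap into connecting-map composites. Once $T_{2i-1}(k-1)\ot_{k-1}T_{2i}(k)\cong\op{Cone}(\tau(cap_{2i,2i-1}))$ is set up carefully, each verification reduces to Leibniz's rule and the single identity $d(cr_{2i,2i})=id_{(2i,2i)}$, with no signs to track.
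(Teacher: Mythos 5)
Your proof is correct and follows essentially the same approach as the paper's: the relations are verified via the projection/inclusion structure maps of the decomposition in Lemma \ref{12=21+es}, and the differentials are checked by Leibniz's rule together with $d(cr_{2i,2i})=id_{(2i,2i)}$, matching the paper's computation of $d(\tau(cr_{2i-1,2i}))$ in Figure \ref{c4}. The explicit identification of $T_{2i-1}(k-1)\ot_{k-1}T_{2i}(k)$ as the mapping cone of $\tau(cap_{2i,2i-1})$, with $\op{\imath}_1,\op{pr}_2$ as its chain-map structure maps and $\op{\imath}_2,\op{pr}_1$ having the connecting map as their $d$, is a tidy conceptual reformulation of the same block-triangular computation rather than a different argument.
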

\begin{proof}
All the relations in Figure \ref{c3} are mapped to identities which are given by compositions of the projections and inclusions.
\begin{figure}[h]
\begin{overpic}
[scale=0.18]{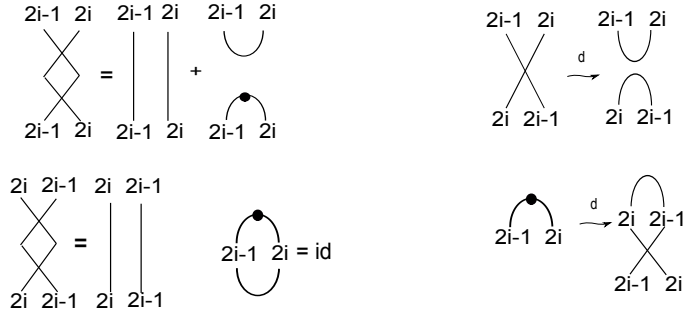}
\end{overpic}
\caption{Relations and nontrivial differentials for morphisms in $\cal{ED}_1$}
\label{c3}
\end{figure}

For $d(cr_{2i-1,2i})=cap_{2i,2i-1} * cup_{2i-1,2i}$, we want to prove that
$$d\circ \tau(cr_{2i-1,2i})+\tau(cr_{2i-1,2i})\circ d=d(\tau(cr_{2i-1,2i}))=\tau(cap_{2i,2i-1} * cup_{2i-1,2i}).$$
The calculation is done in Figure \ref{c4}.
The proof for $d(dap_{2i-1,2i})$ is similar.
\begin{figure}[h]
\begin{overpic}
[scale=0.25]{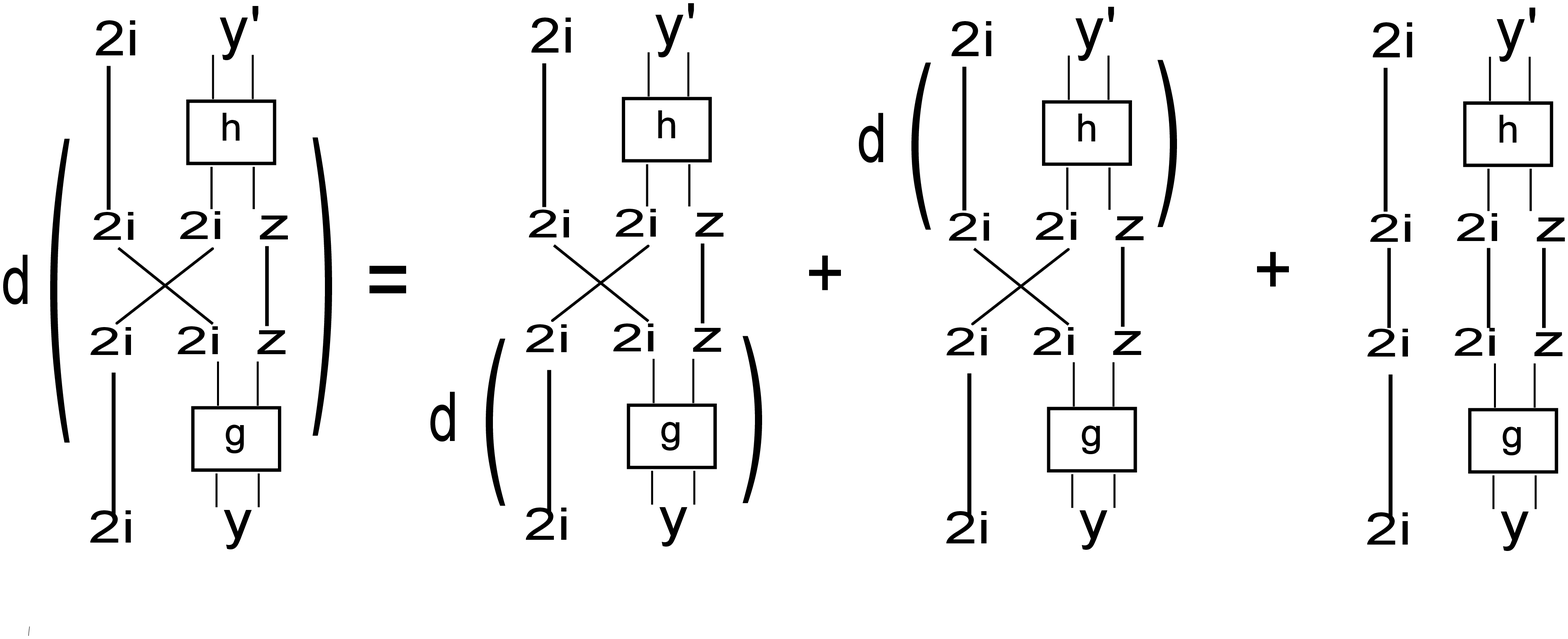}
\put(-12,0){$d(\tau(cr_{2i-1,2i})(g\ot h))=\quad \tau(cr_{2i-1,2i})(d(g\ot h))~+~ \tau(cap_{2i,2i-1} * cup_{2i-1,2i})(g\ot h)$}
\end{overpic}
\caption{The differential of $\tau(cr_{2i-1,2i})$.}
\label{c4}
\end{figure}
\end{proof}

\subsubsection{Morphisms between $(\es)$, $(2i,2i+1)$ and $(2i+1,2i)$}
We will define $\tau(f)$ for $f$ in a set $\cal{ED}_2$ of some elementary diagrams
$$\cal{ED}_2=\{cap_{2i+1,2i}, ~dup_{2i+1,2i}, ~cr_{2i,2i+1}, ~cr_{2i+1,2i}, ~cup_{2i,2i+1}\}.$$

\n $\bullet$ {\bf Preparation.}
The following lemma is motivated from the isomorphism in $\cl$: $(2i+1,2i) \simeq \{(\es) \xra{cup_{2i,2i+1}} (2i,2i+1)\}$ in Lemma \ref{K0cl}.

\begin{lemma} \label{32=23+es}
There is an isomorphism of $R_k$-bimodules:
$$T_{2i+1}(k-1) \ot_{k-1} T_{2i}(k) \cong  R_k ~\oplus~ T_{2i}(k+1) \ot_{k+1} T_{2i+1}(k)[-1].$$
\end{lemma}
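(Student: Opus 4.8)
The plan is to mimic the proof of Lemma \ref{12=21+es} essentially verbatim, with the self-crossing $cr_{2i,2i}$ there replaced here by the crossing $cr_{2i,2i+2}$ of strands carrying the labels $2i$ and $2i+2$. First I would unwind the definitions and the tensor product over $R_{k-1}$ to obtain
\[
T_{2i+1}(k-1) \ot_{k-1} T_{2i}(k) \;=\; \bigoplus_{\mf{z},\mf{w}\in\cb_k}\Hom\big((2i+2)\cdot\mf{z},~(2i)\cdot\mf{w}\big),
\]
so that the only discrepancy from the setting of Lemma \ref{12=21+es} is the extra $2i+2$ appearing on the source and $2i$ on the target. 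By Proposition \ref{hom} this bimodule has an $\F$-basis of canonical diagrams, and by Remark \ref{rmkcan} each canonical diagram has at most one crossing in the first strip $[1,2]\times[0,1]$; this partitions the set $D$ of canonical diagrams as $D = D_1 \sqcup D_2$, according to whether there are zero or one such crossings.

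For $D_1$ I would argue that a canonical diagram with no crossing in the first strip has its leftmost strand running straight up from the label $2i+2$ to the label $2i$, hence carrying exactly one dot (labels are constant along crossingless dotless portions by (L1-a), and a strand with two dots is zero by (L2)), while the remaining strands form an arbitrary canonical diagram $f\in\Hom(\mf{z},\mf{w})$. Thus $D_1 = \{\,dot_{2i}\cdot f \mid f\in\Hom(\mf{z},\mf{w}) \text{ canonical}\,\}$, and the assignment $dot_{2i}\cdot f \mapsto f$ identifies the sub-bimodule spanned by $D_1$ with $R_k$; compatibility with the left and right $R_k$-actions is immediate since prepending $id_{2i+2}$ (resp.\ postcomposing on the $\mf{w}$-strands) commutes with the leftmost dotted strand.

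For $D_2$ I would show that a canonical diagram with a single first-strip crossing is forced to have the form
\[
(id_{2i+2}\cdot g) \,*\, (cr_{2i,2i+2}\cdot id_{\mf{v}}) \,*\, (id_{2i}\cdot h),
\]
with $\mf{v}\in\cb_{k+1}$ and $g\in\Hom(\mf{z},(2i)\cdot\mf{v})$, $h\in\Hom((2i+2)\cdot\mf{v},\mf{w})$ canonical. Indeed, the leftmost strand must enter the first-strip crossing at label $2i+2$, and the strand emerging from it toward the top-left corner must run straight up at label $2i$, since in either case a dot would violate the ``no dot to the left of a crossing'' condition of Definition \ref{defcan} (cf.\ the proof of Lemma \ref{unican}); this pins down the crossing as $cr_{2i,2i+2}$ placed at the very top of the first strip and forces everything below it to be $id_{2i+2}\cdot g$ and everything above it to be $id_{2i}\cdot h$. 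Removing the crossing then gives a bijection between $D_2$, modulo transporting diagrams along $\mf{v}$, and the generators $g\ot h$ of $T_{2i}(k+1)\ot_{k+1}T_{2i+1}(k)$; since the removed crossing $cr_{2i,2i+2}$ has cohomological grading $1$, this bijection records a grading shift and identifies the sub-bimodule spanned by $D_2$ with $T_{2i}(k+1)\ot_{k+1}T_{2i+1}(k)[-1]$. Combining, $D=D_1\sqcup D_2$ yields the claimed $R_k$-bimodule isomorphism. As in Lemma \ref{12=21+es}, I would add the caveat that this is not a direct sum of DG bimodules: $d(cr_{2i,2i+2})$ contributes a nontrivial differential between the two summands.

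The only part requiring genuine care is the structural analysis of $D_2$ --- checking that the lone first-strip crossing of a canonical diagram must be exactly $cr_{2i,2i+2}$ in the claimed position, and that $g$ and $h$ inherit canonicity; this is where the precise reading of ``no dot to the left of a crossing'' is used. Everything else --- the identification of $D_1$ with $R_k$, the compatibility of the two $R_k$-actions, and the grading shift --- is routine bookkeeping parallel to Lemma \ref{12=21+es}.
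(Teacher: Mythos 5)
Your proposal is correct and follows the paper's proof essentially verbatim: the same unwinding of the tensor product to $\bigoplus_{\mf{y},\mf{y'}\in\cb_k}\Hom((2i+2)\cdot\mf{y},~(2i)\cdot\mf{y'})$, the same dichotomy via Remark \ref{rmkcan} between canonical diagrams with zero or one first-strip crossing, the same identification of the crossingless part (leftmost strand a single dotted strand $dot_{2i}$) with $R_k$ and of the crossing part with $T_{2i}(k+1)\ot_{k+1}T_{2i+1}(k)[-1]$ by removing the crossing $cr_{2i,2i+2}$. The extra detail you supply in analyzing $D_2$ (why the no-dot-left-of-a-crossing condition forces the lone crossing to be $cr_{2i,2i+2}$ at the top of the first strip) is a correct elaboration of a step the paper leaves implicit, not a different argument.
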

\begin{proof}
By definition,
\begin{align*}
& T_{2i+1}(k-1) \ot_{k-1} T_{2i}(k) \\
= & \left(\bigoplus_{\mf{y}\in\cb_{k}, \mf{x}\in\cb_{k-1}}\Hom((2i+2)\cdot\mf{y},~\mf{x})\right) \ot_{k-1} \left(\bigoplus_{\mf{x'}\in\cb_{k-1}, \mf{y'}\in\cb_{k}}\Hom(\mf{x'},~(2i)\cdot\mf{y'})\right) \\
= & \left(\bigoplus_{\mf{y}\in\cb_{k}, \mf{y'}\in\cb_{k}}\Hom((2i+2)\cdot\mf{y},~(2i)\cdot\mf{y'})\right),
\end{align*}
\begin{align*}
& T_{2i}(k+1) \ot_{k+1} T_{2i+1}(k) \\
= & \left(\bigoplus_{\mf{y}\in\cb_{k}, \mf{z}\in\cb_{k+1}}\Hom(\mf{y},~(2i)\cdot\mf{z})\right) \ot_{k+1} \left(\bigoplus_{\mf{z'}\in\cb_{k+1}, \mf{y'}\in\cb_{k}}\Hom((2i+2)\cdot\mf{z'},~\mf{y'})\right).
\end{align*}
By Remark \ref{rmkcan}, the set $D'$ of canonical diagrams in $\Hom((2i+2)\cdot\mf{y}, ~(2i)\cdot\mf{y'})$ is a disjoint union of two subsets as shown in Figure \ref{c5}:
$$\begin{array}{rl}
 D'_1=&\{ ~\mbox{canonical diagrams without crossing in the first strip}~ \} \\
 =&\{dot_{2i}\cdot f ~~|~~ \mbox{canonical}~~ f \in \Hom(\mf{y},\mf{y'})\}, \\
D'_2=&\{ ~\mbox{canonical diagrams with one crossing in the first strip}~\} \\
=&\left\{(id_{2i+2}\cdot g)* (cr_{2i,2i+2}\cdot id_{\mf{z}}) * (id_{2i+2}\cdot h) ~~\left|~~ \begin{array}{c} \mbox{canonical}~~g \in \Hom(\mf{y},(2i)\cdot\mf{z}),\\ ~h \in \Hom((2i+2)\cdot\mf{z},\mf{y'}),~~\mbox{for all}~~\mf{z}\end{array} \right.\right\}/\sim,
\end{array}
$$
where ``$\sim$" is modulo the relation of transporting diagrams along $\mf{z}$.
It gives the corresponding decomposition of $T_{2i+1}(k-1) \ot_{k-1} T_{2i}(k)$.
\end{proof}
\begin{figure}[h]
\begin{overpic}
[scale=0.2]{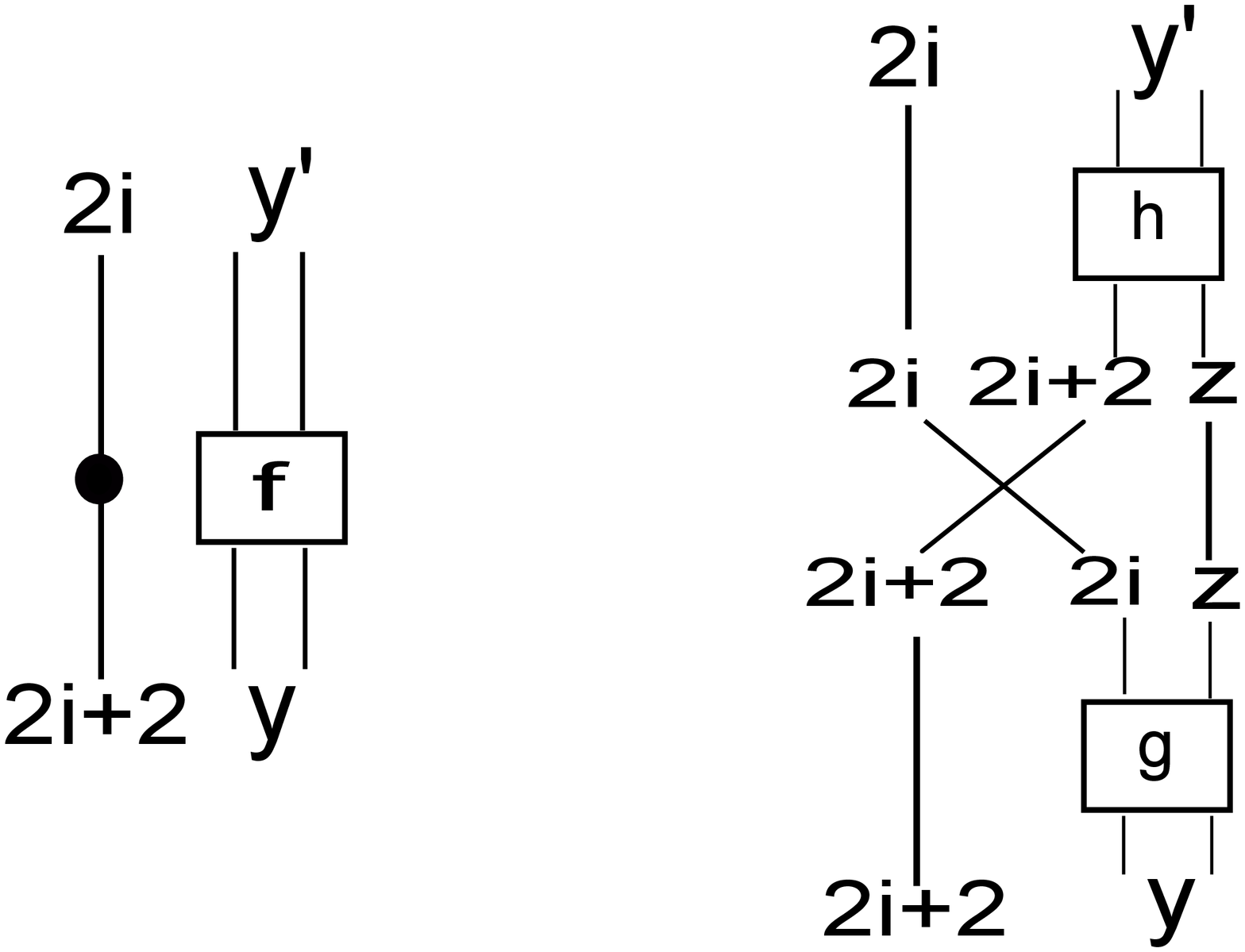}
\end{overpic}
\caption{Two types of canonical diagrams from $(2i+2)\cdot\mf{y}$ to $(2i)\cdot\mf{y'}$}
\label{c5}
\end{figure}

\begin{rmk}
This is not a direct sum of DG bimodules since there is a nontrivial differential between the two summands.
\end{rmk}

\n $\bullet$ {\bf Definition of $\tau$, Part II.}
For $f \in \cal{ED}_2 \backslash \{cup_{2i,2i+1}\}$, $\tau(f)$ is one of the projections or inclusions with respect to the isomorphism in Lemma \ref{32=23+es}.
We use the notation in $D'=D'_1 \sqcup D'_2$ as the generators of the bimodules.

\n(1) The homomorphism for $cap_{2i+1,2i} \in \Hom((2i+1,2i),(\es))$ is a projection:
$$\begin{array}{cccc}
\tau(cap_{2i+1,2i})=\op{pr}_1: & T_{2i+1}(k-1) \ot_{k-1} T_{2i}(k) & \ra & R_k \\
& dot_{2i}\cdot f & \mapsto & f \\
& (id_{2i+2}\cdot g)* (cr_{2i,2i+2}\cdot id_{\mf{z}}) * (id_{2i}\cdot h) & \mapsto & 0.
\end{array}
$$

\vspace{.1cm}
\n(2) The homomorphism for $dup_{2i+1,2i} \in \Hom((\es),(2i+1,2i))$ is an inclusion:
$$\begin{array}{cccc}
\tau(dup_{2i+1,2i})=\op{\imath}_1: & R_k & \ra & T_{2i+1}(k-1) \ot_{k-1} T_{2i}(k) \\
& f & \mapsto & dot_{2i}\cdot f.
\end{array}
$$

\vspace{.1cm}
\n(3) The homomorphism for $cr_{2i,2i+1} \in \Hom((2i+1,2i),(2i,2i+1))$ is a projection:
$$\begin{array}{cccc}
\tau(cr_{2i,2i+1})=\op{pr}_2: & T_{2i+1}(k-1) \ot_{k-1} T_{2i}(k) & \ra & T_{2i}(k+1) \ot_{k+1} T_{2i+1}(k) \\
& dot_{2i}\cdot f & \mapsto & 0 \\
& (id_{2i+2}\cdot g)* (cr_{2i,2i+2}\cdot id_{\mf{z}}) * (id_{2i}\cdot h) & \mapsto & g \ot h.
\end{array}
$$

\vspace{.1cm}
\n(4) The homomorphism for $cr_{2i+1,2i} \in \Hom((2i,2i+1),(2i+1,2i))$ is an inclusion:
$$\begin{array}{cccc}
\tau(cr_{2i+1,2i})=\op{\imath}_2: & T_{2i}(k+1) \ot_{k+1} T_{2i+1}(k) & \ra & T_{2i+1}(k-1) \ot_{k-1} T_{2i}(k) \\
& g \ot h & \mapsto & (id_{2i+2}\cdot g)* (cr_{2i,2i+2}\cdot id_{\mf{z}}) * (id_{2i}\cdot h).
\end{array}
$$

\vspace{.1cm}
\n(5) The homomorphism for $cup_{2i,2i+1} \in \Hom((\es),(2i,2i+1))$ is a composition:
$$\begin{array}{cccc}
\tau(cup_{2i,2i+1}): & R_k & \ra & T_{2i}(k+1) \ot_{k+1} T_{2i-1}(k) \\
& f & \mapsto & \op{pr}_2(d(\op{\imath}_1(f))) .
\end{array}
$$

\vspace{.2cm}
The definitions (1)-(4) are summarized in Figure \ref{c5-1}.
\begin{figure}[h]
\begin{overpic}
[scale=0.25]{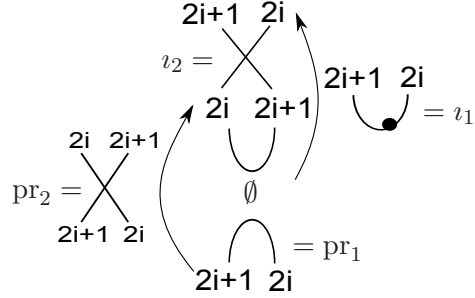}
\put(-10,25){$\op{pr}_2=$}
\put(30,60){$\op{\imath}_2=$}
\put(52,24){$\es$}
\put(65,10){$=\op{pr}_1$}
\put(100,46){$=\op{\imath}_1$}
\end{overpic}
\caption{The projections and inclusions, where the bimodule corresponding to $(2i+1,2i)$ is the sum of the two bimodules corresponding to $(\es)$ and $(2i, 2i+1)$.}
\label{c5-1}
\end{figure}

\begin{lemma} \label{relation32}
The functor $\tau$ maps (R3), (R4-a,b) for $f \in \cal{ED}_2$ to identities, and maps differentials of $f \in \cal{ED}_2$ to differentials of the homomorphisms.
\end{lemma}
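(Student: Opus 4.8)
The plan is to follow the template of the proof of Lemma \ref{relation12}, with the decomposition of Lemma \ref{12=21+es} replaced by that of Lemma \ref{32=23+es},
$$T_{2i+1}(k-1) \ot_{k-1} T_{2i}(k) \cong R_k ~\oplus~ T_{2i}(k+1) \ot_{k+1} T_{2i+1}(k)[-1].$$
Write $\op{\imath}_1,\op{\imath}_2$ and $\op{pr}_1,\op{pr}_2$ for the inclusions and projections of the two summands. By construction $\tau(cap_{2i+1,2i}) = \op{pr}_1$, $\tau(dup_{2i+1,2i}) = \op{\imath}_1$, $\tau(cr_{2i,2i+1}) = \op{pr}_2$, $\tau(cr_{2i+1,2i}) = \op{\imath}_2$, and as maps of graded bimodules these satisfy the formal identities $\op{pr}_a \circ \op{\imath}_b = \delta_{ab}\,\op{id}$ and $\op{\imath}_1\op{pr}_1 + \op{\imath}_2\op{pr}_2 = \op{id}$. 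First I would check that every instance of (R3) and of (R4-a,b) all of whose diagrams lie in $\cal{ED}_2$ becomes, after applying $\tau$, exactly one of these composites: the loop relation (R3) becomes $\op{pr}_1 \circ \op{\imath}_1 = \op{id}$ once the dotted loop is written as $dup_{2i+1,2i}$ followed by $cap_{2i+1,2i}$; the double-crossing relations (R4-a) and (R4-b) become $\op{pr}_2\op{\imath}_2 = \op{id}$ and $\op{\imath}_2\op{pr}_2 = \op{id}+\op{\imath}_1\op{pr}_1$ (the latter using char $2$). This is the mirror image of Figure \ref{c3} and uses no new idea.

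The substance is compatibility with the differentials. Reading off Figure \ref{6}, the members of $\cal{ED}_2$ with nonzero differential are the crossing $cr_{2i,2i+1}$ and the dotted cup $dup_{2i+1,2i}$ (mirroring $cr_{2i-1,2i}$ and $dap_{2i-1,2i}$ in $\cal{ED}_1$), with $d(cr_{2i,2i+1})$ the turn-back through $(\es)$ built from $cap_{2i+1,2i}$ and $cup_{2i,2i+1}$, and $d(dup_{2i+1,2i})$ the analogous composite of $cup_{2i,2i+1}$ with $cr_{2i+1,2i}$. The point is that $\tau(cup_{2i,2i+1})$ was defined as $\op{pr}_2 \circ d \circ \op{\imath}_1$ precisely so that the total differential on $T_{2i+1}(k-1) \ot_{k-1} T_{2i}(k)$, written as a block matrix against the splitting above, is triangular with its unique off-diagonal entry equal to $\tau(cup_{2i,2i+1})$. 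I would first verify this by computing $d(dot_{2i}\cdot f)$ on a generator $dot_{2i}\cdot f \in D_1'$ via the inductive definition of $d$ on dotted strands (Section 4.2) together with the relations (LD) of Lemma \ref{LD}: its $D_1'$-component is $dot_{2i}\cdot df$ up to sliding the dot, and its $D_2'$-component is a diagram with a single crossing in the first strip. Granting the triangular form, each chain-map identity collapses summandwise to bookkeeping — for $cr_{2i,2i+1}$, on $R_k$ both sides of $d\circ\tau(cr_{2i,2i+1}) + \tau(cr_{2i,2i+1})\circ d = \tau(d(cr_{2i,2i+1}))$ equal $\op{pr}_2\circ d$, while on the other summand both vanish because $d$ preserves it and $\op{pr}_1$ kills it — and the chain-map property of $\tau(cup_{2i,2i+1})$ (note $cup_{2i,2i+1}$ is $d$-closed in $\cl$) and the identity for $d(dup_{2i+1,2i})$ follow in the same way. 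The accompanying canonical-diagram computation is the analogue of Figure \ref{c4}.

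I expect the main obstacle to be this off-diagonal computation together with the degree bookkeeping, rather than anything conceptual: one must confirm that $\op{pr}_2\circ d\circ\op{\imath}_1$ is a cocycle against the two internal differentials (equivalently that $\tau$ of $d^2(cr_{2i,2i+1}) = 0$ holds), and one must track the grading shift $[-1]$ — of opposite sign to the $[+1]$ appearing in Lemma \ref{12=21+es}, which is forced because the crossing in $D_2'$ here is $cr_{2i,2i+2}$ of cohomological degree $+1$ rather than $cr_{2i,2i}$ of degree $-1$ — through $\op{pr}_2$, $\op{\imath}_1$, $d$ and all their composites so that every map lands in the intended cohomological degree. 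Once the explicit form of $d$ on the dotted generators of $D_1'$ is in hand, the remaining verifications are the routine mirror images of those in Lemma \ref{relation12}.
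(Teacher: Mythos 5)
Your proposal is correct and follows essentially the same route as the paper: the relations (R3), (R4-a,b) become the projection--inclusion identities $\op{pr}_a\circ\op{\imath}_b=\delta_{ab}\,\op{id}$ and $\op{\imath}_1\op{pr}_1+\op{\imath}_2\op{pr}_2=\op{id}$ under the decomposition of Lemma~\ref{32=23+es}, and the differential identity for $dup_{2i+1,2i}$ (with $cr_{2i,2i+1}$ handled similarly) reduces to the computation $d(\op{\imath}_1(f))=\op{\imath}_1(d(f))+\op{\imath}_2(\op{pr}_2(d(\op{\imath}_1(f))))$, which is exactly Figure~\ref{c6}. Your extra discussion of the triangular block form of $d$ and the sign of the grading shift is a clean way of packaging what the paper does diagrammatically.
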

\begin{proof}
It is easy to see that $\tau$ maps all relations for $f \in \cal{ED}_2$ to identities which are given by compositions of the projections and inclusions.

For $d(dup_{2i+1,2i})=cup_{2i,2i+1} * cr_{2i+1,2i}$, we want to prove that
$$d\circ \tau(dup_{2i+1,2i})+\tau(dup_{2i+1,2i})\circ d=d(\tau(dup_{2i+1,2i}))=\tau(cup_{2i,2i+1} * cr_{2i+1,2i}).$$
The calculation is done in Figure \ref{c6}.
The proof for $d(cr_{2i,2i+1})$ is similar.
\begin{figure}[h]
\begin{overpic}
[scale=0.25]{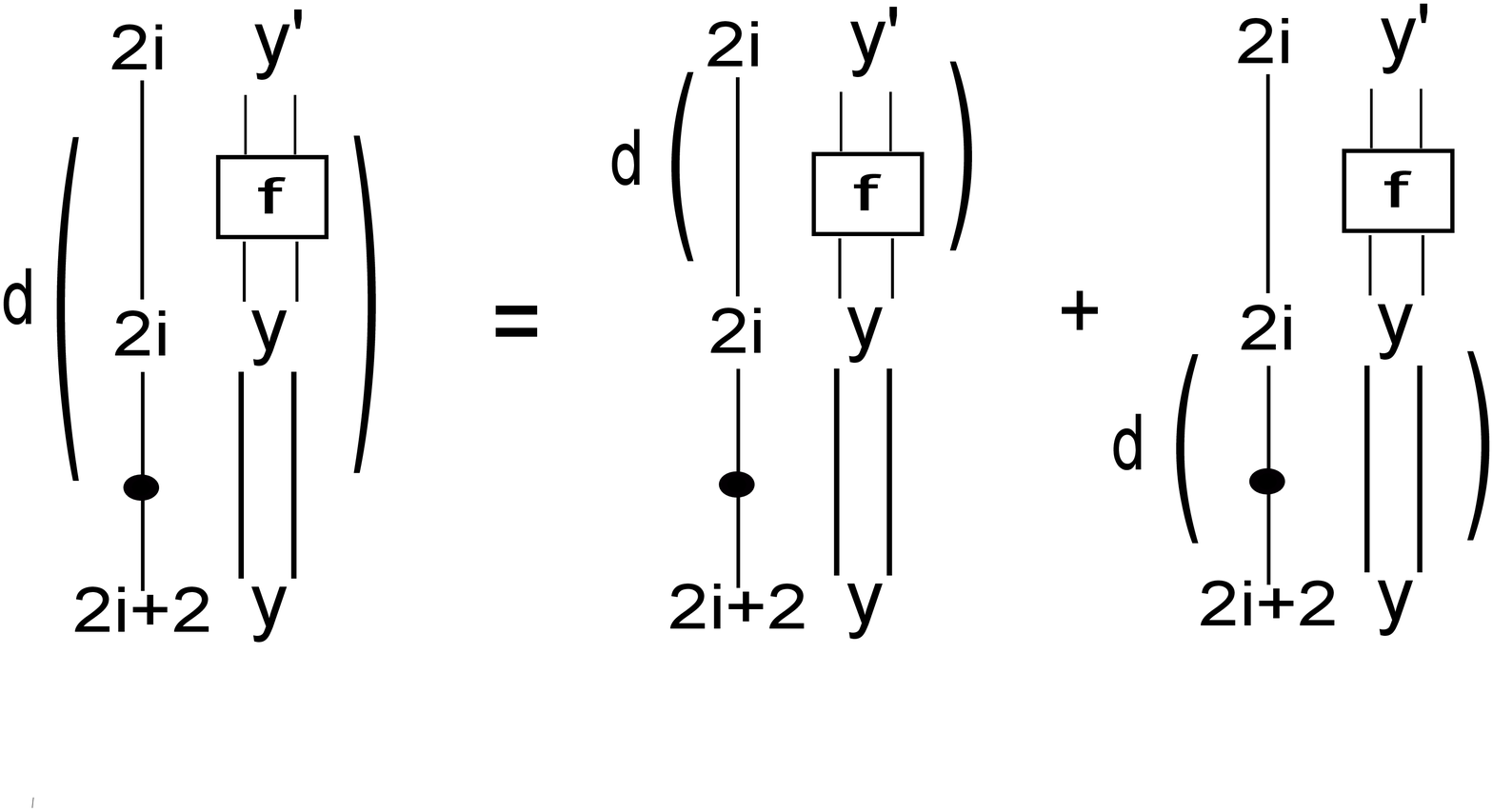}
\put(-15,-2){$d(\tau(dup_{2i+1,2i})(f))=\quad \tau(dup_{2i+1,2i})(d(f))~+~ \tau(cup_{2i,2i+1} * cr_{2i+1,2i})(f)$}
\put(10,5){$d(\op{\imath}_1(f))=\quad\quad \op{\imath}_1(d(f))\quad+ \quad\op{\imath}_2(\op{pr}_2(d(\op{\imath}_1(f))))$}
\end{overpic}
\caption{The differential of $\tau(dup_{2i+1,2i})$.}
\label{c6}
\end{figure}
\end{proof}

\subsubsection{Morphisms of dotted strands}
We define $\tau(f)$ for dotted strands $f$ in
$$\cal{ED}_3=\{dot_{2i}, dot_{2i+1} ~|~ i \in \Z\}.$$

\n $\bullet$ {\bf Definition of $\tau$, Part III.}

\n(1) The homomorphism for $dot_{2i} \in \Hom((2i+2),(2i))$ is a composition with $(dot_{2i}\cdot id_{\mf{y}})$:
$$\begin{array}{cccc}
\tau(dot_{2i}): & T_{2i+2}(k) & \ra & T_{2i}(k) \\
&  f & \mapsto & f * (dot_{2i}\cdot id_{\mf{y}}),
\end{array}
$$
for $f \in \Hom(\mf{x},~(2i+2)\cdot\mf{y})$.

\vspace{.2cm}
\n(2) The homomorphism for $dot_{2i+1} \in \Hom((2i-1),(2i+1))$ is a pre-composition with $(dot_{2i}\cdot id_{\mf{z}})$:
$$\begin{array}{cccc}
\tau(dot_{2i+1}): & T_{2i-1}(k) & \ra & T_{2i+1}(k) \\
&  f & \mapsto &  (dot_{2i}\cdot id_{\mf{z}}) * f ,
\end{array}
$$
for $f \in \Hom((2i)\cdot\mf{z},~\mf{y})$.

\vspace{.2cm}
We discuss two relations about dotted strands: isotopy relation (R1-d); double dot relation (R2).
\begin{lemma} \label{relationdot}
The functor $\tau$ maps the two relations above to identities.
\end{lemma}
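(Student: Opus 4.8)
Lemma \ref{relationdot} asks us to check that $\tau$ sends the isotopy relation (R1-d) and the double dot relation (R2) to identities of bimodule homomorphisms. The plan is to unwind each relation on generators of the relevant bimodules and use that $\tau(dot_{2i})$ and $\tau(dot_{2i+1})$ are simply post- and pre-composition with the diagram $dot_{2i}\cdot id$ in $\cv$, so that both sides reduce to statements about stacking diagrams in $\cv$ that follow from the defining relations $\cal{L}$.

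For (R1-d): the relation says that a dot slid through a cup (resp. cap) equals the dotted cup $dup_{2i+1,2i}$ (resp. dotted cap $dap_{2i-1,2i}$). Concretely, $dot_{2i+1}* cup_{2i-1,2i} = dup_{2i+1,2i}$ and $cap_{2i,2i-1}* dot_{2i} = dap_{2i-1,2i}$ (with the index conventions of Figure \ref{2}). I would evaluate $\tau$ on each side. For the cup side, $\tau(cup_{2i-1,2i}) = \imath_1$ sends $f \in R_k$ to $id_{2i}\cdot f$, and then $\tau(dot_{2i+1})$ pre-composes with $dot_{2i}\cdot id_{\mf z}$; composing, one gets exactly the generator $dot_{2i}\cdot f$ of $D'_1$, which is by definition $\tau(dup_{2i+1,2i}) = \imath_1$ applied to $f$ in the second block decomposition (Lemma \ref{32=23+es}). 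Similarly for the cap side one uses Lemma \ref{12=21+es} and the definition of $\tau(dap_{2i-1,2i}) = \op{pr}_1$, checking that $\tau(cap_{2i,2i-1})$ followed by $\tau(dot_{2i})$ agrees with $\op{pr}_1$ after identifying the relevant generator $id_{2i}\cdot f$. The only subtlety is bookkeeping the two different block decompositions (the one with summand $T_{2i}\ot T_{2i-1}[1]$ versus the one involving $dot$'s), but in each case both composites land on the $D_1$/$D'_1$ summand and act as the identity there.

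For (R2): a strand with a double dot is zero, i.e. $dot_{e}* dot_{e+2} = 0$ for even $e$, and likewise $dot_{e+1}*dot_{e-1}=0$ on the odd side. Under $\tau$ this becomes the composite $\tau(dot_{2i})\circ\tau(dot_{2i+2})$ on the even tower, which sends $f \in \Hom(\mf x, (2i+4)\cdot\mf y)$ to $f * (dot_{2i+2}\cdot id_{\mf y}) * (dot_{2i}\cdot id_{\mf y})$. The rightmost factor is a strand carrying two dots, which is zero in $\Hom_{\cv}$ by the double dot relation (L2); hence the composite is the zero homomorphism, as required. The odd case is identical, using pre-composition with $(dot_{2i-2}\cdot id)*(dot_{2i}\cdot id)$ and again (L2). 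I would state this in a sentence and refer to the relevant figure if the paper provides one (analogous to Figure \ref{c3}).

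I expect the main obstacle to be purely organizational rather than mathematical: one must match up the concrete generators $id_{2i}\cdot f$, $dot_{2i}\cdot f$, and the tensor expressions $g\ot h$ across the two isomorphisms of Lemmas \ref{12=21+es} and \ref{32=23+es} without sign or index errors, and be careful that the dot-index conventions ($dot_e$ lowers even labels, $dot_{e+1}$ raises odd labels) are tracked correctly when passing through cups and caps. Once the generators are named consistently, every identity reduces to either "this composite is the identity on the $D_1$-summand" or "a doubly-dotted strand is zero by (L2)", both of which are immediate. I would present the argument as a short case check with one figure, leaving the symmetric cases to the reader as the paper does elsewhere.
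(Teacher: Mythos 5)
Your argument for (R2) is the same as the paper's: both $\tau(dot_{2i})\circ\tau(dot_{2i+2})$ and its odd counterpart are pre/post-composition in $\cv$ with a strand carrying two consecutive dots, which vanishes by (L2). That part is fine.

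For (R1-d) your strategy — compute $\tau$ on the two composite expressions for the dotted cup/cap and compare with the direct definitions of $\tau(dup)$ and $\tau(dap)$ from Parts I and II — is the same as the paper's, and your cup computation ($\tau(cup_{2i-1,2i})=\imath_1$ followed by pre-composition with $dot_{2i}\cdot id$ lands on $dot_{2i}\cdot f$, which is $\tau(dup_{2i+1,2i})(f)$) is correct. However, there are two problems on the cap side. First, the formula $cap_{2i,2i-1}*dot_{2i}=dap_{2i-1,2i}$ does not type-check: the two correct expressions for $dap_{2i-1,2i}\in\Hom((2i-1,2i),(\es))$ are $cap_{2i+1,2i}\circ(dot_{2i+1}\cdot id_{2i})$ and $(id_{2i-1}\cdot dot_{2i-2})*cap_{2i-1,2i-2}$ (the latter is what the paper actually verifies in Figure \ref{c8}). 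Second, and more substantively: the cap map is a projection $\op{pr}_1$, so you must check agreement on both types of generators of $T_{2i-1}(k-1)\ot_{k-1}T_{2i}(k)$ in Lemma \ref{12=21+es} — the crossingless $D_1$ generators $id_{2i}\cdot f$ (where everything is as easy as you say) and the $D_2$ generators containing a crossing $cr_{2i,2i}$ in the first strip. For $D_2$ you must show that post-composing with $dot_{2i-2}\cdot id$ still yields a canonical diagram with a crossing in the first strip, i.e. that it lies in the $D'_2$ summand of Lemma \ref{32=23+es}, so that $\op{pr}_1$ kills it; this is not automatic and is precisely where the paper invokes the dot-slide relation of Lemma \ref{LD}. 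Your sentence "both composites land on the $D_1$/$D'_1$ summand" sidesteps this: the source of $\op{pr}_1$ contains the $D_2$ summand, and the nontrivial content of the lemma is what the composite does there. As written, your outline would give the trivial half of the check and silently skip the part requiring (LD).
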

\begin{proof}
(1) The lemma is true for (R2) since the corresponding homomorphism is a composition with a diagram which contains a double dot. See Figure \ref{c7}.
\begin{figure}[h]
\begin{overpic}
[scale=0.25]{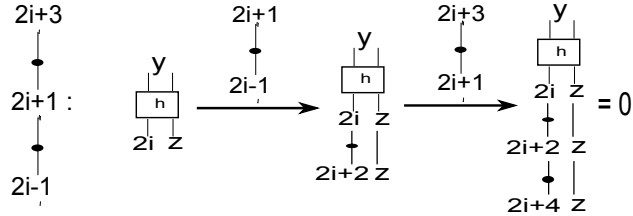}
\end{overpic}
\caption{The homomorphisms for two dotted strands are given by horizontal arrows.}
\label{c7}
\end{figure}

\n(2) For (R1-d), the calculation for $\tau(dap_{2i-1,2i})=\tau((id_{2i-1}\cdot dot_{2i-2}) * cap_{2i-1,2i-2})$ is done in Figure \ref{c8}.
The proofs for other cases are similar.
\begin{figure}[h]
\begin{overpic}
[scale=0.22]{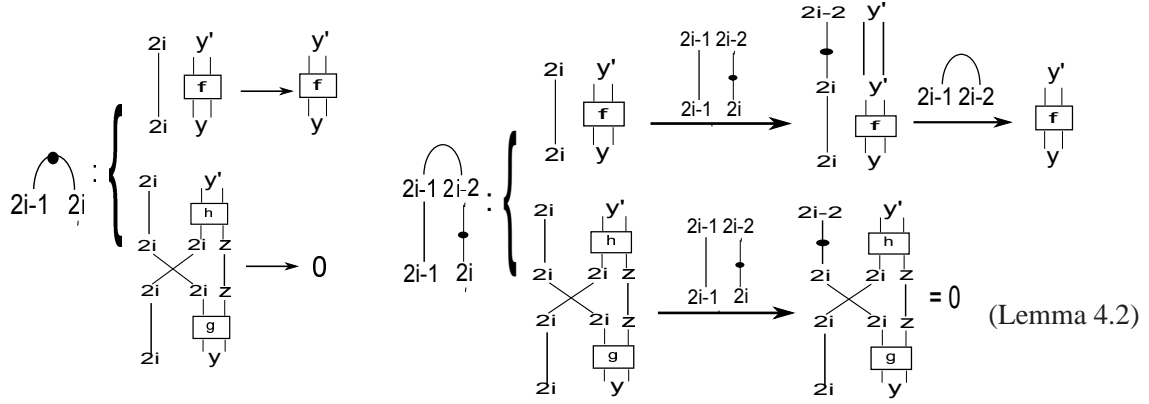}
\put(92,7){(Lemma \ref{LD})}
\end{overpic}
\caption{The homomorphisms for isotopy of a dot.}
\label{c8}
\end{figure}
\end{proof}

\subsubsection{Morphisms of crossings $cr_{i,j}$ for $|i-j|\neq1$}
We define $\tau(f)$ for $f \in \cal{ED}_4$, where $\cal{ED}_4$ consists of crossings of $3$ types depending on the parity of labels:
\be
\item $cr_{2i,2j}$ for all $i,j$;
\item $cr_{2i-1,2j-1}$ for all $i,j$;
\item $cr_{2i,2j-1}$, $cr_{2j-1,2i}$ for $j-i\neq 0,1$.
\ee
It will complete the definitions of $\tau(f)$ for all elementary diagrams $f$ in $\cl$.

\n $\bullet$ {\bf Preparation.}
By definition,
\begin{align*}
& T_{2j}(k) \ot_{k} T_{2i}(k+1) \\
= & \left(\bigoplus_{\mf{x}\in\cb_{k-1}, \mf{y}\in\cb_{k}}\Hom(\mf{x},~(2j)\cdot\mf{y})\right) \ot_{k} \left(\bigoplus_{\mf{y'}\in\cb_{k}, \mf{z}\in\cb_{k+1}}\Hom(\mf{y'},~(2i)\cdot\mf{z})\right) \\
= & \left(\bigoplus_{\mf{x}\in\cb_{k-1}, \mf{z}\in\cb_{k+1}}\Hom(\mf{x},~(2j,2i)\cdot\mf{z})\right),
\end{align*}
\begin{align*}
& T_{2j-1}(k) \ot_{k} T_{2i-1}(k-1) \\
= & \left(\bigoplus_{\mf{z}\in\cb_{k+1}, \mf{y}\in\cb_{k}}\Hom((2j)\cdot\mf{z},~\mf{y})\right) \ot_{k} \left(\bigoplus_{\mf{y'}\in\cb_{k}, \mf{x}\in\cb_{k-1}}\Hom((2i)\cdot\mf{y'},~\mf{x})\right) \\
= & \left(\bigoplus_{\mf{z}\in\cb_{k+1}, \mf{x}\in\cb_{k-1}}\Hom((2i,2j)\cdot\mf{z},~\mf{x})\right),
\end{align*}

\vspace{.2cm}
\n $\bullet$ {\bf Definition of $\tau$, Part IV.}

\n(1) The homomorphism for $cr_{2i,2j} \in \Hom((2j,2i),(2i,2j))$ is a composition with $(cr_{2i,2j}\cdot id_{\mf{z}})$:
$$\begin{array}{cccc}
\tau(cr_{2i,2j}): & T_{2j}(k) \ot_{k} T_{2i}(k+1) & \ra & T_{2i}(k) \ot_{k} T_{2j}(k+1) \\
&  f & \mapsto & f * (cr_{2i,2j}\cdot id_{\mf{z}}),
\end{array}
$$
for $f \in \Hom(\mf{x},~(2j,2i)\cdot\mf{z})$.

\vspace{.2cm}
\n(2) The homomorphism for $cr_{2i-1,2j-1} \in \Hom((2j-1,2i-1),(2i-1,2j-1))$ is a pre-composition with $(cr_{2i,2j}\cdot id_{\mf{z}})$:
$$\begin{array}{cccc}
\tau(cr_{2i-1,2j-1}): & T_{2j-1}(k) \ot_{k} T_{2i-1}(k-1) & \ra & T_{2i-1}(k) \ot_{k} T_{2j-1}(k+1) \\
&  f & \mapsto &  (cr_{2i,2j}\cdot id_{\mf{z}}) * f,
\end{array}
$$
for $f \in \Hom((2i,2j)\cdot\mf{z},~\mf{x})$.

\vspace{.2cm}
\n(3) For $j-i \neq 0,1$, the homomorphism for $cr_{2i,2j-1} \in \Hom((2j-1,2i),(2i,2j-1))$ is:
$$\begin{array}{cccc}
\tau(cr_{2i,2j-1}): & T_{2j-1}(k-1) \ot_{k-1} T_{2i}(k) & \ra & T_{2i}(k+1) \ot_{k+1} T_{2j-1}(k) \\
& f  & \mapsto &  g \ot h,
\end{array}
$$
for $f=(id_{2j}\cdot g)* (cr_{2i,2j}\cdot id_{\mf{z}}) * (id_{2i}\cdot h) \in \Hom((2j)\cdot\mf{y},~(2i)\mf{y'})$, where $g \in \Hom(\mf{y},~(2i)\cdot\mf{z})$ and $h \in \Hom((2j)\cdot\mf{z},\mf{y'})$.

Any canonical diagram in $\Hom((2j)\cdot\mf{y},~(2i)\mf{y'})$ has exactly one crossing in the first strip since the label $2j$ cannot be connected to $2i$ if $j-i\neq 0,1$.
Hence the map $\tau(cr_{2i,2j-1})$ is an isomorphism.

\vspace{.2cm}
\n(4) For $j-i \neq 0,1$, the homomorphism for $cr_{2j-1,2i} \in \Hom((2i,2j-1),(2j-1,2i))$ is $$\tau(cr_{2j-1,2i}):=\tau(cr_{2i,2j-1})^{-1}.$$

\begin{figure}[h]
\begin{overpic}
[scale=0.2]{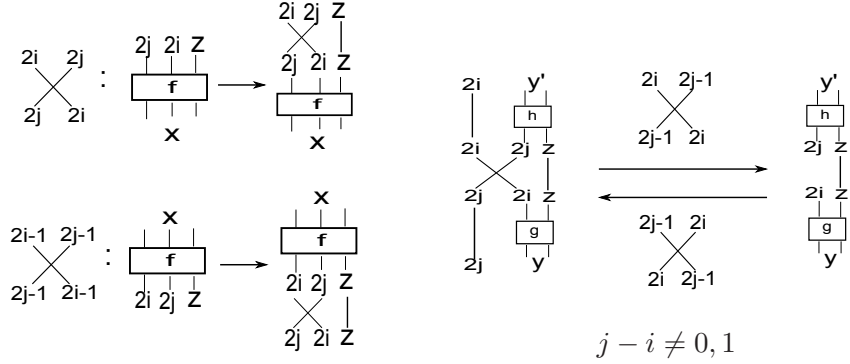}
\put(70,0){$j-i \neq 0,1$}
\end{overpic}
\caption{The homomorphisms for the crossings in $\cal{ED}_4$.}
\label{c9}
\end{figure}

By definition, $\tau$ maps double crossing relations (R4-c,d) for morphisms in $\cal{ED}_4$ to identities, and maps differentials of the morphisms to differentials of the homomorphisms.
The following lemma is an easy consequence of the double crossing relation (R4-d).
\begin{lemma}\label{13=31}
For $|i-j|>1$, there is an isomorphism of $R$-bimodules:
$$T_{j} \ot_{R} T_{i} \cong  T_{i} \ot_{R} T_{j}[\op{gr}(cr_{i,j})].$$
\end{lemma}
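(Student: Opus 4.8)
The plan is to obtain the isomorphism by transporting the isomorphism $(j,i)\simeq (i,j)[\op{gr}(cr_{i,j})]$ of Lemma~\ref{K0cl}(4) through the functor $\tau$. Since $\tau$ is monoidal with $\tau(\es)=R$ and $\tau(m)=T_m$, we have $\tau((j,i))=T_j\ot_R T_i$ and $\tau\big((i,j)[\op{gr}(cr_{i,j})]\big)=(T_i\ot_R T_j)[\op{gr}(cr_{i,j})]$. For $|i-j|>1$ the crossings $cr_{i,j}\in\Hom((j,i),(i,j))$ and $cr_{j,i}\in\Hom((i,j),(j,i))$ both lie in $\cal{ED}_4$ --- they fall into the both-even, the both-odd, or the mixed-parity case of Part~IV according to the parities of $i$ and $j$ --- so the homomorphisms $\tau(cr_{i,j})$ and $\tau(cr_{j,i})$ have already been defined, and these are the candidate maps.

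First I would check that $\tau(cr_{i,j})$ and $\tau(cr_{j,i})$ are mutually inverse. In the both-even and both-odd cases each homomorphism is post- or pre-composition with a one-crossing diagram $cr_{\cdot,\cdot}\cdot id_{\mf z}$ in $\cv$, so each composite $\tau(cr_{j,i})\circ\tau(cr_{i,j})$, $\tau(cr_{i,j})\circ\tau(cr_{j,i})$ is post- or pre-composition with a double crossing; since the two strands carry distinct (even) labels because $i\neq j$, relation (L3-b) collapses this double crossing to the identity, and both composites are the identity homomorphism. In the mixed-parity case one of $\tau(cr_{i,j})$, $\tau(cr_{j,i})$ was \emph{defined} as the inverse of the other --- the latter being an isomorphism because, for the relevant parity, every canonical diagram in the relevant $\Hom$-space has exactly one crossing in the first strip --- so there is nothing to verify. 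It then remains to note that both maps are $d$-closed, hence genuine homomorphisms in $DG(R\ot R^{op})$: this follows from the fact recorded just before the lemma that $\tau$ sends differentials of crossings in $\cal{ED}_4$ to differentials of the corresponding homomorphisms, together with $d(cr_{i,j})=0$ for $|i-j|>1$ (the differential of a crossing vanishes in this range, as it must for Lemma~\ref{K0cl}(4) to make sense in $H(\cl)$). Two mutually inverse $d$-closed homomorphisms exhibit the asserted isomorphism $T_j\ot_R T_i\cong (T_i\ot_R T_j)[\op{gr}(cr_{i,j})]$.

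I do not expect a real obstacle here: the lemma is simply the image of the double crossing relation (R4-d) under $\tau$, and every ingredient --- the definitions of $\tau$ on $\cal{ED}_4$, relation (L3-b) in $\cv$, and the vanishing of $d$ on these crossings --- is already in place. The only step demanding any care is the bookkeeping of which of the three sub-cases of Part~IV applies for given parities of $i$ and $j$; in each case the composite is either an honest double-crossing computation in $\cv$ or a definitional inverse, so no new computation is really needed.
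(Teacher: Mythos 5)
Your proof is correct and takes essentially the same route as the paper, which merely records that ``$\tau$ maps double crossing relations (R4-c,d) for morphisms in $\cal{ED}_4$ to identities'' and declares the lemma ``an easy consequence of (R4-d).'' You have simply unfolded that one-liner: in the same-parity cases the composite of $\tau(cr_{i,j})$ and $\tau(cr_{j,i})$ is (pre- or post-)composition with a double crossing of distinct even labels, killed by (L3-b); in the mixed-parity case $\tau(cr_{2j-1,2i})$ is by definition the inverse of $\tau(cr_{2i,2j-1})$; and $d$-closedness of both maps follows since $\tau$ intertwines differentials and $d(cr_{i,j})=0$ when $|i-j|>1$.
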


\subsection{More relations}
We checked that $\tau$ maps (R1-d), (R2), (R3) and (R4) of $\cl$ to the identities, and maps the differentials of all elementary diagrams to the differentials of the homomorphisms in Lemmas \ref{relation12}, \ref{relation32} and \ref{relationdot}.
The proofs for (R1-a) and (R1-e) easily follow from the definition of bimodules.
It remains to check for
\be
\item (R1-b): isotopy of a single strand;
\item (R1-c): isotopy of a crossing;
\item (R5): triple intersection moves;
\item (R6): dot slide.
\ee

We want to show that $\tau$ maps two sides of any relation to an identity of the homomorphisms.
It suffices to verify that the homomorphisms agree on generators of the bimodules.
We will check some examples using specific labels.
Cases in general are similar.

We present calculations for two examples on (R1-b) in Figures \ref{r1}, \ref{r2},
for one example on (R1-c) in Figure \ref{r3},
for two examples on (R6) in Figures \ref{r4}, \ref{r4-1},
and for one example on (R5) in Figure \ref{r5}.

\vspace{.4cm}
\begin{figure}[h]
\begin{overpic}
[scale=0.2]{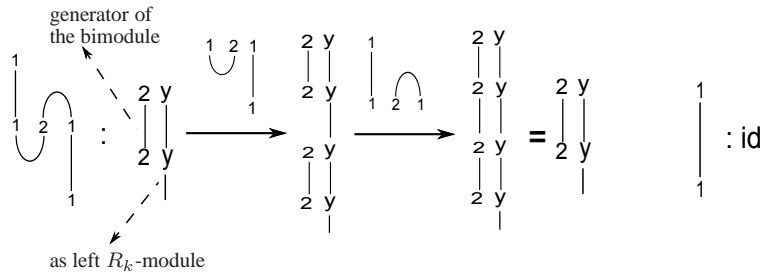}
\put(5,30){{\scriptsize generator of}}
\put(5,27){{\scriptsize the bimodule}}
\put(5,-3){{\scriptsize as left $R_k$-module}}
\end{overpic}
\caption{Isotopy of a single strand}
\label{r1}
\end{figure}

\begin{figure}[h]
\begin{overpic}
[scale=0.2]{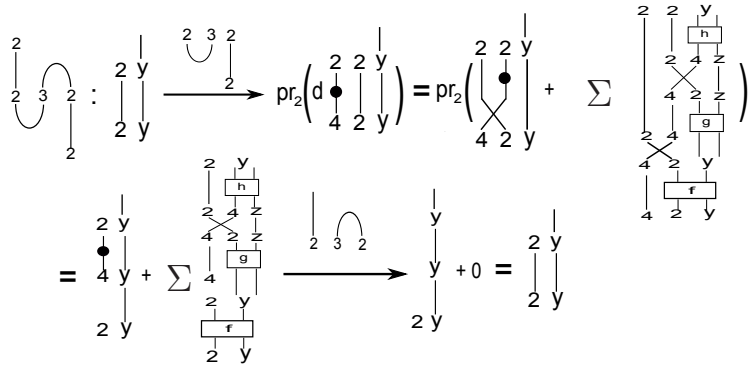}
\put(78,35){\Large{$\Sigma$}}
\put(21,10){\Large{$\Sigma$}}
\end{overpic}
\caption{Isotopy of a strand: the second homomorphism kills all terms but one.}
\label{r2}
\end{figure}

\begin{figure}[h]
\begin{overpic}
[scale=0.19]{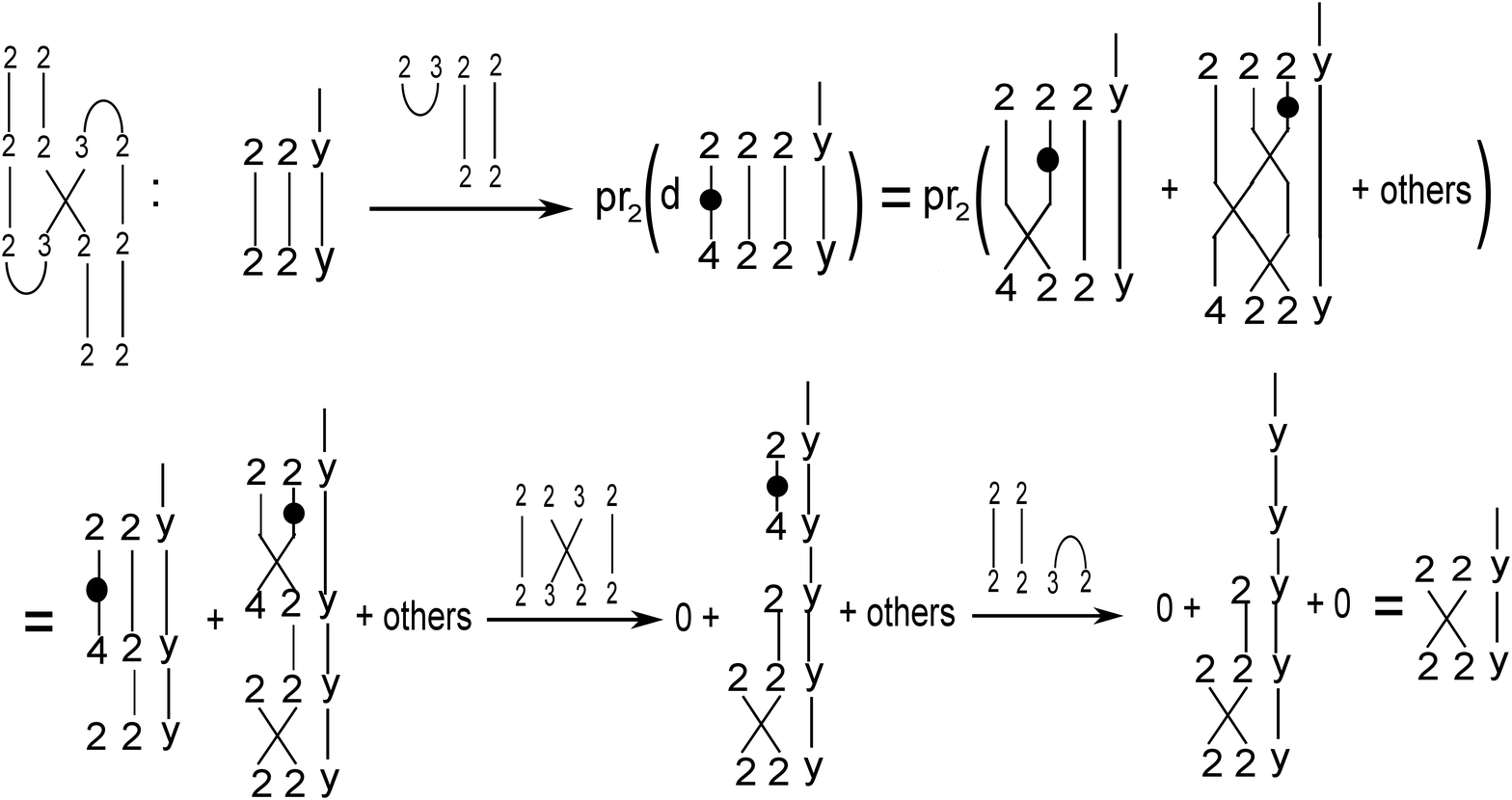}
\end{overpic}
\caption{Isotopy of a crossing: the second homomorphism kills the first term and the last homomorphism kills all other terms but one.}
\label{r3}
\end{figure}

\begin{figure}[h]
\begin{overpic}
[scale=0.22]{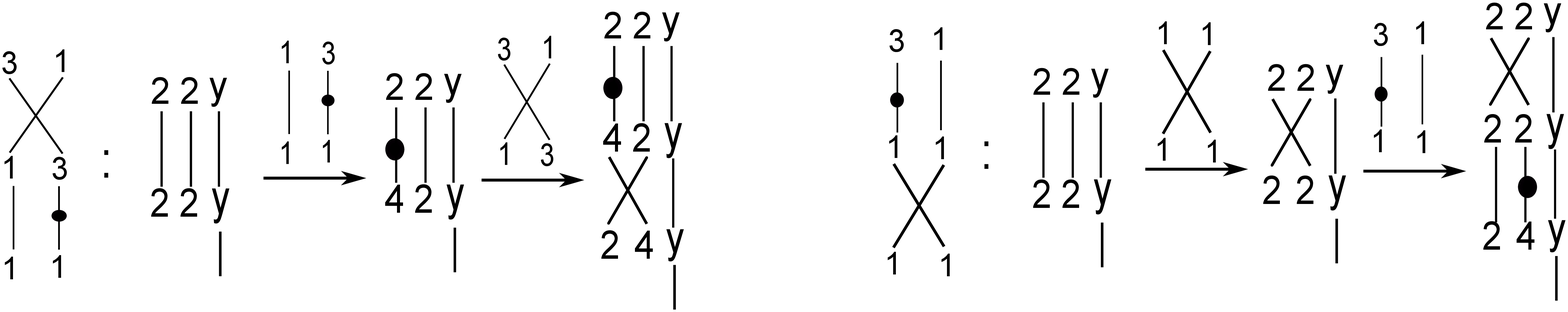}
\end{overpic}
\caption{Dot slide: (L5) of $\cv$ is used.}
\label{r4}
\end{figure}

\begin{figure}[h]
\begin{overpic}
[scale=0.2]{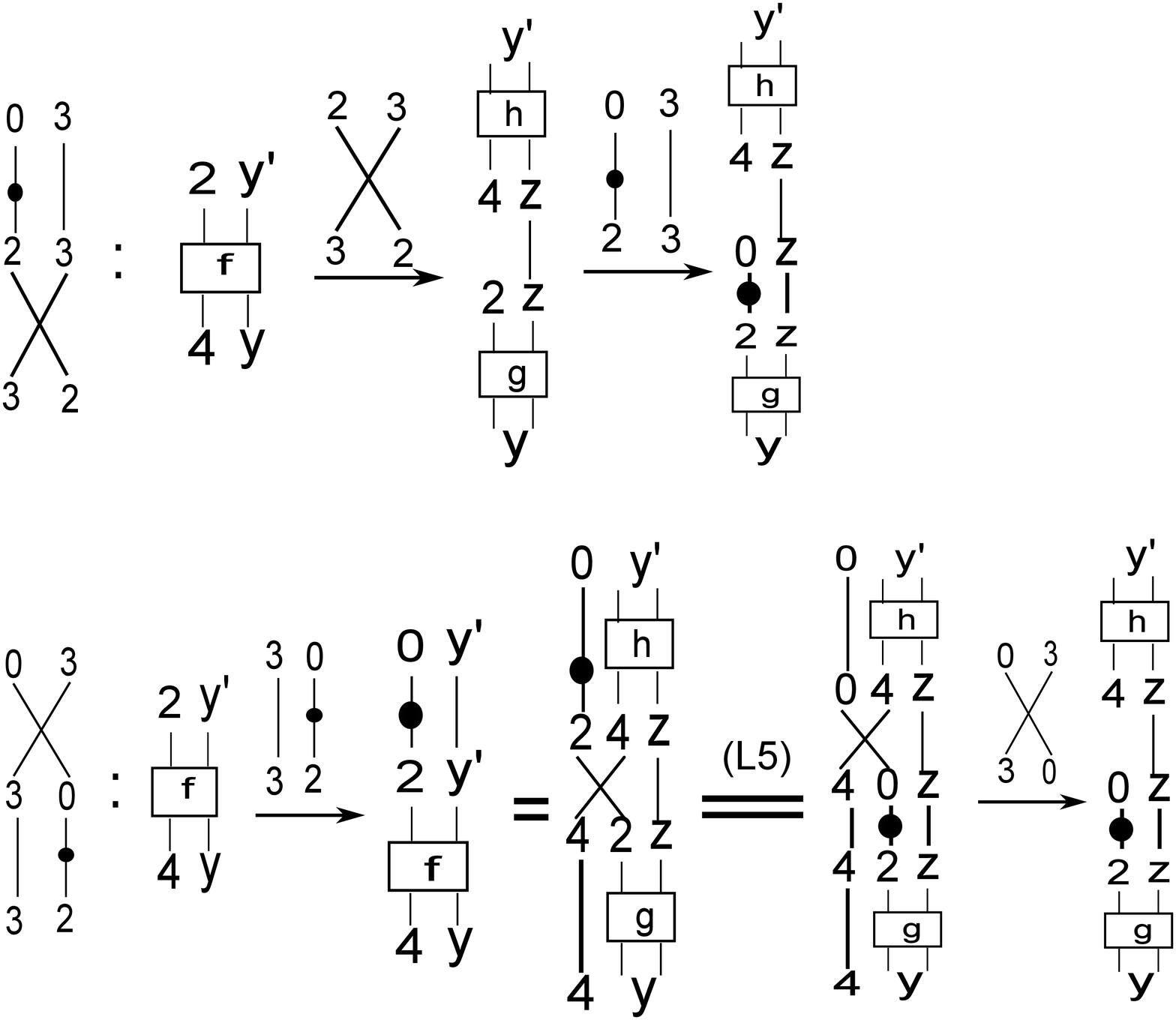}
\end{overpic}
\caption{Dot slide: (L5) of $\cv$ is used.}
\label{r4-1}
\end{figure}

\begin{figure}[h]
\begin{overpic}
[scale=0.22]{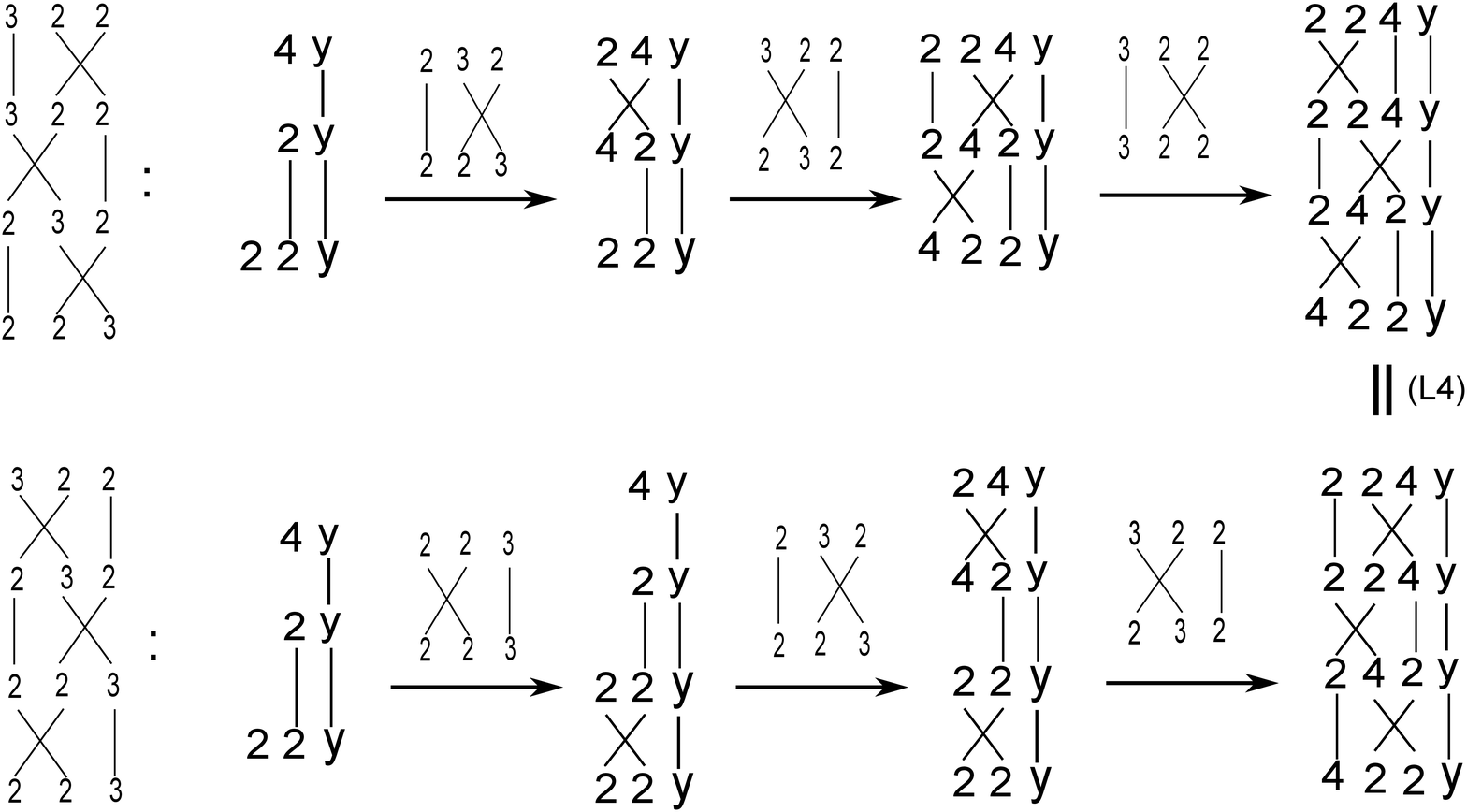}
\end{overpic}
\caption{Triple intersection moves: (L4) of $\cv$ is used.}
\label{r5}
\end{figure}

Finally, we proved that the functor $\tau: \cl \ra DG(R\ot R^{op})$ is well-defined.

\subsection{The categorical action of $\cl$}
We show that tensoring with $T_i$ over $R$ maps the projective DG (left) $R$-modules $P(\mf{x})$ into $DGP(R)$, for $\mf{x} \in \cb$.

\begin{lemma} \label{tensor}
(1) $T_{2i}(k) \ot_{k} P(\mf{y}) ~\cong ~ P(N((2i)\cdot \mf{y}))[\eta((2i)\cdot \mf{y}))]$, for $\mf{y}\in \cb_k$.

\n(2) $T_{2i-1}(k) \ot_{k} P(\es(k)) = 0$, if $i \leq k$.

\n(3) $T_{2i-1} \ot P(\mf{y}) \in DGP(R)$, for $\mf{y}\in \cb_k$.
\end{lemma}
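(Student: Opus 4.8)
The plan is to evaluate each tensor product using the identity $M\ot_{R_k}P(\mf{y}) = M\ot_{R_k}R_k\,e(\mf{y}) \cong M\,e(\mf{y})$ (valid for any right $R_k$-module $M$), to prove $(1)$ and $(2)$ by inspecting morphisms in $\cv$, and then to deduce $(3)$ from $(1)$, $(2)$ and the bimodule isomorphisms of Lemmas~\ref{12=21+es}, \ref{32=23+es} and \ref{13=31}. For $(1)$, right multiplication by $e(\mf{y})=id_{\mf{y}}$ through the inclusion $\imath_i$ isolates the summand $\bigoplus_{\mf{x}\in\cb_{k-1}}\Hom(\mf{x},(2i)\cdot\mf{y})$ of $T_{2i}(k)$ with its left $R_{k-1}$-action; this is the (left) DG module $P((2i)\cdot\mf{y})$ attached to the possibly-unsorted sequence $(2i)\cdot\mf{y}\in\E(\cv)_{k-1}$, and postcomposing every morphism with the $d$-closed, left $R_{k-1}$-linear isomorphism $(2i)\cdot\mf{y}\simeq N((2i)\cdot\mf{y})[\eta((2i)\cdot\mf{y})]$ of Lemma~\ref{normal} yields the stated identification. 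For $(2)$, $T_{2i-1}(k)\,e(\es(k))\cong\bigoplus_{\mf{z}\in\cb_{k+1}}\Hom((2i)\cdot\mf{z},\es(k))$; in any diagram from $(2i)\cdot\mf{z}$ the strand leaving the leftmost endpoint starts with label $2i$, and since no elementary diagram of $\cv$ raises the label along a strand, its top label is $\le 2i\le 2k<2k+2$, whereas every entry of $\es(k)=(2k+2,2k+4,\dots)$ is $\ge 2k+2$; hence $D((2i)\cdot\mf{z},\es(k))=\es$ for every $\mf{z}$ and the module vanishes.

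For $(3)$ only the component $T_{2i-1}(k)$ with $k=\op{ch}(\mf{y})$ acts on $P(\mf{y})$. Let $\nu(\mf{y})$ be the least $n$ with $y_m=2m+2k$ for all $m\ge n$; I would induct on the pair $(\nu(\mf{y}),\max(i-k,0))$ in lexicographic order. If $\nu(\mf{y})=1$ and $i\le k$ then $\mf{y}=\es(k)$ and the module is $0\in DGP(R)$ by $(2)$, the base case. Otherwise: if $\mf{y}=\es(k)$ with $i>k$ I would write $\es(k)=(2k+2)\cdot\es(k+1)$, and if $\nu(\mf{y})\ge 2$ I would write $\mf{y}=(y_1)\cdot\mf{y}'$ with $\mf{y}'=(y_2,y_3,\dots)\in\cb_{k+1}$ and $\nu(\mf{y}')=\nu(\mf{y})-1$; in both cases $(1)$ gives $P(\mf{y})\cong T_{2\ell}(k+1)\ot_{k+1}P(\mf{y}')$, where $2\ell\in\{2k+2,y_1\}$ and $N((2\ell)\cdot\mf{y}')=\mf{y}$, $\eta=0$ (reading $\mf{y}'=\es(k+1)$ in the vacuum case). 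Then $T_{2i-1}(k)\ot_k P(\mf{y})\cong T_{2i-1}(k)\ot_k T_{2\ell}(k+1)\ot_{k+1}P(\mf{y}')$, so it remains to resolve the bimodule $T_{2i-1}(k)\ot_k T_{2\ell}(k+1)$: on the relevant charge component, Lemma~\ref{13=31} (if $|2i-1-2\ell|>1$) identifies it with $T_{2\ell}(k+2)\ot_{k+2}T_{2i-1}(k+1)$ up to a shift, while Lemma~\ref{12=21+es} (if $\ell=i$) and Lemma~\ref{32=23+es} (if $\ell=i-1$) exhibit it, as a DG bimodule, as an extension of $R_{k+1}$ by $T_{2\ell}(k+2)\ot_{k+2}T_{2i-1}(k+1)[\pm1]$, hence place it in a distinguished triangle with these two bimodules. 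Tensoring over $R_{k+1}$ with $P(\mf{y}')$ and using $R_{k+1}\ot_{k+1}P(\mf{y}')=P(\mf{y}')$, I conclude that $T_{2i-1}(k)\ot_k P(\mf{y})$ is assembled by shifts and mapping cones from objects $P(\cdot)$ and from $T_{2\ell}(k+2)\ot_{k+2}\bigl(T_{2i-1}(k+1)\ot_{k+1}P(\mf{y}')\bigr)$. The inner factor lies in $DGP(R)$ by the inductive hypothesis, since its induction pair is strictly smaller ($\nu$ drops when $\nu(\mf{y})\ge 2$; and for $\mf{y}=\es(k)$, $i>k$ one has $\nu=1$ unchanged but $\max(i-(k+1),0)<\max(i-k,0)$, the process bottoming out in $(2)$), and $T_{2\ell}(k+2)\ot_{k+2}-$ sends $DGP(R)$ to $DGP(R)$ because by $(1)$ it carries each $P(\mf{w})$ to a shifted projective and it commutes with shifts and cones, hence takes one-sided twisted complexes of projectives to the same. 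Since $DGP(R)$ is closed under shifts and mapping cones, $T_{2i-1}(k)\ot_k P(\mf{y})\in DGP(R)$.

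I expect the difficulty to be organizational rather than conceptual: keeping the charge indices $k$ consistent when invoking Lemmas~\ref{12=21+es}, \ref{32=23+es} and \ref{13=31} (stated either for a fixed $k$ or for the full bimodules $T_i=\bigoplus_k T_i(k)$), and, more substantively, phrasing $(3)$ through distinguished triangles rather than splittings, since the decompositions in Lemmas~\ref{12=21+es} and \ref{32=23+es} hold only for the underlying \emph{graded} bimodules (there is a nonzero differential between the two summands). Verifying carefully that $T_{2\ell}(k+2)\ot_{k+2}-$ preserves $DGP(R)$ and that the lexicographic induction really terminates — in particular that the vacuum reduction genuinely lowers $\max(i-k,0)$ at each step — are the remaining points that need care.
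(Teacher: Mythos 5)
Your proof of (1) and (2) is essentially the paper's: isolate the $e(\mf{y})$-summand, observe the generating diagrams, and for (2) note that the strand leaving the label $2i$ has no target in $\es(k)$. For (3), the paper pushes $T_{2i-1}$ past the whole chain $T_{y_1}\ot\cdots\ot T_{y_j}$ in one pass, obtaining a graded decomposition $T=T'\oplus T''$ (with $T'$ built from the $T_{y_s}$ alone and $T''=T_{y_1}\ot\cdots\ot T_{y_j}\ot T_{2i-1}$), then tensors with $P(\es(k+j))$ and invokes (1) and (2) to finish; the accumulated differentials are absorbed as the twisting data of a one-sided twisted complex of shifted projectives. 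Your version peels off one $T_{y_1}$ at a time, replacing the batch exchange with a lexicographic induction on $(\nu(\mf{y}),\max(i-k,0))$, and phrases each single exchange (Lemmas~\ref{12=21+es}, \ref{32=23+es}) as a distinguished triangle rather than a decomposition-plus-extra-differentials. This is a genuine reorganization: it trades the paper's compactness for a cleaner handling of the DG structure (no need to argue that the graded summand $T'$ tensored with the vacuum projective inherits a well-defined twisted-complex differential) and makes the termination mechanism explicit. The one point you should state once rather than assume is the observation you already gesture at in passing: tensoring with $T_{2\ell}$ preserves $DGP(R)$ because by (1) it sends each $P(\mf{w})$ to a shifted $P(\cdot)$ and commutes with shifts and cones, hence carries one-sided twisted complexes of projectives to the same. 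With that in place the induction is airtight and the two proofs agree in substance.
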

\begin{proof}
(1) Recall that $P(\mf{y})=R_k\cdot e(\mf{y})$ is generated by all diagrams ending at $\mf{y}$ in $\cv$.
$$T_{2i}(k) \ot_{k} P(\mf{y})=\bigoplus_{\mf{x}\in\cb_{k-1}}\Hom(\mf{x},~(2i)\cdot\mf{y})$$
is generated by all diagrams ending at $(2i)\cdot \mf{y}$.
As DG left $R_{k-1}$-modules, it is isomorphic to $P(N((2i)\cdot \mf{y}))[\eta((2i)\cdot \mf{y}))]$ generated by all diagrams ending at the normalization $N((2i)\cdot \mf{y})$ because $(2i)\cdot \mf{y} \cong N((2i)\cdot \mf{y})[\eta((2i)\cdot \mf{y}))]$ in $\cv$ by Lemma \ref{normal}.

\vspace{.1cm}
\n(2) Recall that $\es(k)=(2k+2,2k+4,\dots) \in \cb_k$ is the vacuum object.
$$T_{2i-1}(k) \ot_{k} P(\es(k))=\bigoplus_{\mf{z}\in\cb_{k+1}}\Hom((2i)\cdot\mf{z},~\es(k))$$
is generated by all diagrams from $(2i)\cdot \mf{z}$ to $\es(k)$.
If $i \leq k$, then $2i < 2k+2 < 2k+4 < \cdots$.
In any diagram, the strand from $2i$ has no term to connect in $\es(k)$.
Hence the tensor product is zero.

\vspace{.1cm}
\n(3) We reduce the case to a combination of (1) and (2) by using the isomorphisms in Lemmas \ref{12=21+es}, \ref{32=23+es} and \ref{13=31}.
There exists $j\gg0$ such that $i \leq k+j$ and $\mf{y}=(y_1, \dots, y_j, \dots)$ satisfying $y_n=2n+2k$ for $n>j$.
Using (1) we can rewrite
$$P(\mf{y})=T_{y_1} \ot \cdots \ot T_{y_j} \ot P(\es(k+j)),$$
$$T_{2i-1} \ot P(\mf{y})=T_{2i-1} \ot T_{y_1} \ot \cdots \ot T_{y_j} \ot P(\es(k+j))=T \ot P(\es(k+j)),$$
where $T=T_{2i-1} \ot T_{y_1} \ot \cdots \ot T_{y_j}$.
By ignoring the grading, an $R$-bimodule $T_{2i-1} \ot T_{y_n}$ is
\be
\item either a direct sum of $T_{y_n} \ot T_{2i-1}$ and $R$ as in Lemmas \ref{12=21+es}, \ref{32=23+es};
\item or $T_{y_n} \ot T_{2i-1}$ as in Lemma \ref{13=31}.
\ee
We continuously exchange $T_{2i-1}$ with $T_{y_s}$ in $T$ for $s=1,\dots, j$ to obtain a decomposition of $T=T'\oplus T''$ as bimodules such that
\be
\item $T'$ is a direct sum of tensor products of $T_{y_s}$ for $1 \leq s \leq j$, which is possibly zero;
\item $T''= T_{y_1} \ot \cdots \ot T_{y_j}\ot T_{2i-1}$.
\ee

The tensor product $T \ot P(\es(k+j))$ is obtained from $$(T' \ot P(\es(k+j))) \oplus (T'' \ot P(\es(k+j)))$$ by adding some differentials,
where the first summand is in $DGP(R)$ by (1) and the second one is zero by (2).
Hence $T_{2i-1} \ot P(\mf{y})=T \ot P(\es(k+j))$ is in $DGP(R)$ as well.
\end{proof}

Since $DGP(R)$ is generated by the $P(\mf{x})$'s, we proved that tensoring with $T_i$ maps $DGP(R)$ to itself.
We then have the categorical action via the functor $\tau: \cl \ra DG(R\ot R^{op})$:
$$\mathcal{G}: \cl \times DGP(R) \xrightarrow{} DGP(R).$$
The induced map on their homology categories descends to a linear map $K_0(\cal{G}): K_0(\cl) \times V \ra V$ under the isomorphism in Lemma \ref{k0}.
The pullback of $K_0(\cal{G})$ under $\gamma: \Cl \ra K_0(\cl)$ in Proposition \ref{K0clsur} gives an action of $Cl_{\Z}$ on $V$.

\begin{lemma} \label{k0action}
The pullback action agrees with the Fock space representation $G: \Cl \times V \ra V$.
\end{lemma}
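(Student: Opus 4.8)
The plan is to reduce the statement to the two defining identities for $V$ supplied by Lemma~\ref{linear} and then to read those identities off from the computations of $T_i\ot_R P(\mf{x})$ in Lemma~\ref{tensor}. Write $\overline{G}\colon\Cl\times V\ra V$ for the pullback action in question. Under the identification $[i]=\gamma(a_i)\in K_0(\cl)$ together with the isomorphism $K_0(DGP(R))\cong V=\Z\lan\hb\ran$ obtained from Lemmas~\ref{dgpv}, \ref{equi} and \ref{k0} (which sends $[P(\mf{y})]\mapsto\mf{y}$ for $\mf{y}\in\hb$, and sends $[P(\mf{x})]$ to $0$ whenever $\mf{x}\in\cb\setminus\hb$), the generator $a_i\in\Cl$ acts on $K_0(DGP(R))$ by the endofunctor $T_i\ot_R(-)$. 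Since $\overline{G}$ is by construction a $\Cl$-module structure on $V$, Lemma~\ref{linear} reduces the whole statement to verifying
\[
a_{2j}\cdot\mf{y}=\eta_j(\mf{y})\ \ \text{for}\ \mf{y}\in\hb,\qquad a_{2j-1}\cdot\overline{|k\ran}=0\ \ \text{for}\ k\geq j.
\]

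For the even generators I would invoke Lemma~\ref{tensor}(1): $T_{2j}\ot_R P(\mf{y})\cong P(\op{N}((2j)\cdot\mf{y}))[\eta((2j)\cdot\mf{y})]$. If $2j$ occurs among the entries of $\mf{y}$, then $\op{N}((2j)\cdot\mf{y})$ has a repeated entry, so by the argument of Lemma~\ref{K0cl}(1) (the identity of such an object is $d$ of a self-crossing) it is isomorphic to the zero object of $H(DGP(R))$; hence $a_{2j}\cdot\mf{y}=0=\eta_j(\mf{y})$. Otherwise, letting $i$ be the number of entries of $\mf{y}$ strictly below $2j$, so that $y_i<2j<y_{i+1}$, one checks that $\op{N}((2j)\cdot\mf{y})$ is exactly the increasing monomial underlying $\eta_j(\mf{y})$, while $(2j)\cdot\mf{y}=(2j,y_1,y_2,\dots)$ has precisely the $i$ decreasing pairs $(2j,y_1),\dots,(2j,y_i)$, so $\eta((2j)\cdot\mf{y})=i$. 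Since a cohomological shift by $[m]$ multiplies a class in a Grothendieck group by $(-1)^m$, this gives $a_{2j}\cdot\mf{y}=(-1)^i\op{N}((2j)\cdot\mf{y})=\eta_j(\mf{y})$.

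For the odd generators, $\overline{|k\ran}=\es(k)$ is the vacuum object of charge $k$, and Lemma~\ref{tensor}(2) gives $T_{2j-1}(k)\ot_k P(\es(k))=0$ whenever $j\leq k$. Hence $a_{2j-1}\cdot\overline{|k\ran}=0$ for all $k\geq j$, which is the second identity. Together with the first, Lemma~\ref{linear} then forces $\overline{G}=G$.

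The step most deserving of care is the sign bookkeeping in the even case: one must confirm that the inversion count $\eta((2j)\cdot\mf{y})$ agrees on the nose with the exponent $i$ in the wedging operator, and that the grading shift in Lemma~\ref{tensor}(1) is the one that produces exactly that sign in $K_0$. A second, more bureaucratic point is to make sure the isomorphism $K_0(DGP(R))\cong V$ of Lemma~\ref{k0}, transported along Lemma~\ref{dgpv} and the quasi-isomorphism $H(R)\hookrightarrow R$, does send $[P(\mf{y})]$ to the monomial $\mf{y}$ for $\mf{y}\in\hb$. Beyond these no genuine obstacle arises: the substantive input is Lemma~\ref{tensor}, and Lemma~\ref{linear} is tailored precisely so that checking these two families of identities is enough.
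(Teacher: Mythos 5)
Your proof is correct and follows the same approach as the paper: reduce via Lemma~\ref{linear} to the two identities, then read them off from Lemma~\ref{tensor}(1),(2). The paper's own proof is terser and leaves the sign check implicit, whereas you spell out that $\eta((2j)\cdot\mf{y})$ equals the exponent $i$ in the wedging operator and that the shift $[\eta((2j)\cdot\mf{y})]$ contributes exactly $(-1)^i$ in $K_0$; this is the right way to fill in that step.
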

\begin{proof}
From Lemma \ref{linear}, it suffices to check that
\begin{align*}
a_{2j}\cdot v =\eta_j(v) \quad\quad & \mbox{for}~~ v \in \hb, \\
a_{2j-1}\cdot \overline{|k\ran}=0 \quad\quad & \mbox{for}~~ k \geq j.
\end{align*}
The action of $a_{i}$ is a decategorification of tensoring with $T_i$.
The equations follow from Lemma \ref{tensor}(1), (2).
\end{proof}

\begin{proof}[Proof of Theorems \ref{thmcl} and \ref{thmv}]
The action $G: \Cl \times V \ra V$ is faithful by Lemma \ref{faithful}.
Since $G$ is the pullback of $K_0(\cal{G})$ under $\gamma: Cl \ra K_0(\cl)$, the map $\gamma$ must be injective.
Together with $\gamma$ being surjective by Proposition \ref{K0clsur}, we proved that $K_0(\cl) \cong Cl$.
\end{proof}
The author would like to thank Anthony Licata for suggestions to prove the injectivity of $\gamma$ using the faithfulness of $V$.

\end{document}